\newtheorem{theorem}{Theorem}[section]
\newtheorem{lemma}[theorem]{Lemma}
\newtheorem{proposition}[theorem]{Proposition}
\newtheorem{corollary}[theorem]{Corollary}
\newtheorem{definition}[theorem]{Definition}
\newtheorem{remark}[theorem]{Remark}
\newtheorem{example}[theorem]{Example}
\begin{document}

\title[Indecomposable injective modules of finite Malcev rank]{Indecomposable injective modules of finite Malcev rank over local commutative rings}
\author{Fran\c{c}ois Couchot}
\address{Universit\'e de Caen Basse-Normandie, CNRS UMR
  6139 LMNO,
F-14032 Caen, France}
\email{francois.couchot@unicaen.fr} 

\keywords{chain ring, valuation domain, polyserial module, indecomposable injective module, Goldie dimension}
\subjclass[2010]{13F30, 13C11, 13E05}

\begin{abstract}
It is proven that each indecomposable injective module over a valuation domain $R$ is polyserial if and only if each maximal immediate extension  $\widehat{R}$ of $R$ is of finite rank over the completion $\widetilde{R}$ of $R$ in the $R$-topology. In this case, for each indecomposable injective module $E$, the following invariants are finite and equal: its Malcev rank, its Fleischer rank and its dual Goldie dimension. Similar results are obtained for chain rings satisfying  some additional properties. It is also shown that each indecomposable injective module over  local Noetherian rings of Krull dimension one has finite Malcev rank. The preservation of the finiteness of Goldie dimension  by localization is investigated too.
\end{abstract}

\maketitle

\section*{Introduction and preliminaries}
In this paper all rings are associative and commutative with unity and
all mo\-dules are unital. First we give some definitions. 

\bigskip

\begin{definition}
An $R$-module $M$ is said to be \textbf{uniserial} if its set of submodules is totally ordered by inclusion and  $R$ is a \textbf{chain ring}\footnote{we prefer ``chain ring '' to ``valuation ring'' to avoid confusion with ``Manis valuation ring''.} if it is uniserial as $R$-module. A chain domain is a valuation domain. In the sequel, if $R$ is a chain ring, we denote by $P$ its maximal ideal, $N$ its nilradical, $Z$ its set of zero-divisors ($Z$ is a prime ideal) and we put $Q=R_Z$. Recall that a chain ring $R$ is said to be {\bf Archimedean} if $P$ is the sole non-zero prime ideal.

A module $M$ is said to be \textbf{finitely cogenerated} if its injective hull is a finite direct sum of injective hulls of simple modules. The \textbf{f.c. topology} on a module $M$ is the linear topology defined by taking as a basis of neighbourhoods of zero all submodules $G$ for which $M/G$ is finitely cogenerated (see \cite{Vam75}). This topology is always Hausdorff. We denote by $\widetilde{M}$ the completion of $M$ in its f.c. topology. When $R$ is a chain ring which is not a finitely cogenerated $R$-module, the f.c. topology on $R$ coincides with the $R$-topology which is defined by taking as a basis of neighbourhoods of zero all non-zero principal ideals.
A chain ring $R$ is said to be \textbf{(almost) maximal} if $R/A$ is complete in its f.c. topology for any (non-zero) proper ideal $A$.
\end{definition}

\bigskip

In 1959, Matlis proved that a valuation domain $R$ is almost maximal if and only if $Q/R$ is injective, and in this case, for each proper ideal $A$ of $R$, $E(R/A)\cong Q/A$, see \cite[Theorem 4]{Mat59}. Since $Q$ is clearly uniserial and $Q/R\cong Q/rR$ for each non-zero element $r\in P$, we can also say that $R$ is almost maximal if and only if $E(R/rR)$ is uniserial, if and only if each indecomposable injective module is uniserial. This result was extended to any chain ring in 1971 by Gill, see \cite[Theorem]{Gil71}: a chain ring $R$ is almost maximal if and only if $E(R/P)$ is uniserial, if and only if each indecomposable injective module is uniserial. By using \cite[Proposition 14]{Couch03}, if $R$ is a chain ring, it is easy to check that $E(R/rR)$ is uniserial if and only if so is $E(R/P)$. Let us observe that any indecomposable injective module is uniserial if and only if each finitely generated uniform module is cyclic.

\bigskip

\begin{definition}
If $M$ is a finitely generated module we denote by $\mathrm{gen}\ M$ its
minimal number of generators. If $M$ is a module over a valuation domain $R$ the {\bf Fleischer rank} of $M$, denoted by $\mathrm{Fr}\ M$, is defined to be the minimum rank of torsion-free modules having $M$ as an epimorphic image. 
\end{definition}

\bigskip

In the book ``Modules over valuation domains'' by Fuchs and Salce \cite[Proposition IX.3.1]{FuSa85}(1985), it is proven that $\mathrm{gen}\ M\leq\mathrm{Fr}\ E(R/P)$, for each  finitely generated uniform module $M$ over a valuation domain $R$. However, it remains to give a characterization of valuation domains $R$ for which $\mathrm{Fr}\ E(R/P)$ is finite.

In 2005 \cite[Proposition 2]{Cou05}, if $R$ is an Archimedean chain ring, the author proved that there exists an integer $p>0$ such that $\mathrm{gen}\ M\leq p$ for each finitely generated uniform module $M$ if and only if $R$ is almost maximal, (i.e $p=1$).

\bigskip

\begin{definition}
An exact sequence \ $0 \rightarrow F \rightarrow E \rightarrow G \rightarrow 0$ \ is \textbf{pure}
if it remains exact when tensoring it with any $R$-module. In this case
we say that \ $F$ \ is a \textbf{pure} submodule of $E$. We say that a module $M$ is \textbf{polyserial} if it has a
pure-composition series
\[0=M_0\subset M_1\subset\dots\subset M_n=M,\]
i.e. $M_k$ is a pure submodule of $M$
and $M_k/M_{k-1}$ is a uniserial module for each $k=1,\dots, n$. If the submodules $M_k$ are no longer to be assumed pure in $M$, then we say $M$ \textbf{weakly polyserial}.

The \textbf{Malcev rank} of a module $M$ is defined as the cardinal
number
\[\mathrm{Mr}\ M=\mathrm{sup}\{\mathrm{gen}\ X\mid X\ \mathrm{finitely\ generated\ submodule\ of}\ M\}.\] 
For each module $M$ over a valuation domain we have $\mathrm{Mr}\ M\leq\mathrm{Fr}\ M$.

An $R$-module $F$ is \textbf{pure-injective} if for every pure exact
sequence
\[0\rightarrow A\rightarrow B\rightarrow C\rightarrow 0\]
 of $R$-modules, the following sequence
 \[0\rightarrow\mathrm{Hom}_R(C,F)
\rightarrow\mathrm{Hom}_R(B,F)\rightarrow\mathrm{Hom}_R(A,F)\rightarrow
0\] is exact. An $R$-module $B$
is a \textbf{pure-essential extension} of a submodule $A$ if $A$ is a
pure submodule of $B$ and, if for each
submodule $K$ of $B$, either $K\cap A\ne 0$ or $(A+K)/K$ is not a pure
submodule of $B/K$. 
We say that $B$ is a \textbf{pure-injective hull}
of $A$ if $B$ is pure-injective and a pure-essential extension of $A$.
 By \cite{War69} or \cite[chapter XIII]{FuSa01} each $R$-module $M$ has a
 pure-injective hull and any two pure-injective hulls of $M$ are isomorphic.
In the sequel, for each $R$-module $M$, 
$\widehat{M}$ is its pure-injective hull.
\end{definition}

\bigskip

In this paper we  give a characterization of two classes of chain rings. The first is the class of chain rings $R$ for which each indecomposable injective module is polyserial (Theorem~\ref{T:main}). These rings are exactly the chain rings $R$  which satisfies the following two conditions:
\begin{enumerate}
\item $\mathrm{Mr}_{\widetilde{R}}\widehat{R}<\infty$ \footnote{we shall see that $\widehat{R}$ can be viewed as a $\widetilde{R}$-module by Proposition~\ref{P:compl}(2)};
\item each indecomposable injective module contains a pure uniserial submodule\footnote{This condition holds for each valuation domain and other classes of chain rings but  we don't know if it is verified by any chain ring.}.
\end{enumerate}
The first condition holds if and only if any indecomposable injective module is weakly polyserial. It is also equivalent to the the following: there is a non-zero prime ideal $L$ such that $R_L$ is almost maximal and the valuation domain $R/L$ has a maximal immediate extension of finite rank which is equal to $\mathrm{Mr}_{\widetilde{R}}\widehat{R}$.
These rings  are almost maximal by stages, i.e. there exists a finite descending chain  of prime ideals $(L_i)_{0\leq i\leq n}$, with $L_0=P$  such that $(R/L_{i+1})_{L_i}$ is almost maximal for $i=0,\dots,n-1$ and $R_{L_n}$ is  maximal.  Moreover, for each finitely generated uniform module $M$, $\mathrm{gen}\ M\leq\mathrm{Mr}_{\widetilde{R}}\widehat{R}$, and for each indecomposable injective module $E$, $\mathrm{Mr}_RE\leq\mathrm{Mr}_{\widetilde{R}}\widehat{R}$, the equalities hold for some $M$ and $E$. If $R$ is not a domain then $\mathrm{Mr}\ \widehat{R}<\infty$. A description of such chain rings $R$   is given, and  this description is similar to the one  of valuation domains with a maximal immediate extension of finite rank (\cite[Theorem 10 and Proposition 11]{Cou10}).

The second class is the one of chain rings $R$ for which each localization of any $R$-module of finite Goldie dimension has finite Goldie dimension too. These rings are exactly the chain rings $R$ for which $R/L$ has a maximal immediate extension of finite rank for each non-zero prime ideal $L$. So, the first class is contained but strictly in the second  one, and some examples are given.

It is also shown that the completion $\widetilde{R}$ of any chain ring $R$ in its f.c. topology is Gaussian, and $\widetilde{R}$ is a chain ring if and only if $R$ is either complete or a domain.

For each local Noetherian ring of Krull dimension one $R$ it is proven that there exists a positive integer $m$ such that $\mathrm{Mr}\ E\leq m$ for every indecomposable injective $R$-module $E$. Moreover, for each integer $m>1$ we give an example of a local Noetherian domain of Krull dimension one $D$ satisfying $\mathrm{gen}\ M\leq m$ for each finitely generated uniform $D$-module $M$. However, if $R$ is a chain ring with  such an upper bound $m$  then $m$ is a prime power.

\bigskip

\begin{definition}\label{D:sharp}
 Let $M$ be a non-zero module over a  ring $R$. 
 We set:
\[M_{\sharp}=\{s\in R\mid \exists 0\ne x\in M\ \mathrm{such\ that}\
sx=0\}\quad\mathrm{and}\quad M^{\sharp}=\{s\in R\mid sM\subset M\}.\] 
Then $R\setminus M_{\sharp}$ and $R\setminus M^{\sharp}$ are multiplicative subsets of $R$. 

If $M$ is a module over a chain ring $R$ then $M_{\sharp}$ and $M^{\sharp}$ are prime ideals and they are called the {\bf bottom} and the {\bf top prime ideal}, respectively, associated with $M$.

We say that an $R$-module
$E$ is  \textbf{FP-injective} if $\mathrm{Ext}_R^1(F,E)=0,$ for every finitely
presented $R$-module $F.$ A ring $R$ is called \textbf{self
FP-injective} if it is FP-injective as $R$-module.  Recall that a
module $E$ is FP-injective if and only if it is a pure submodule
of every overmodule.

If $L$ is a prime ideal of a chain ring $R$, as in \cite{FaZa86}, we define the \textbf{total defect} at $L$, $d_R(L)$, the \textbf{completion defect} at $L$, $c_R(L)$, as the rank of the torsion-free $R/L$-module $\widehat{R/L}$ and the rank of the torsion-free $R/L$-module $\widetilde{R/L}$, respectively. 
\end{definition}

\bigskip

\section{Relations  between  $\widehat{R}$ and $\widetilde{R}$}
\label{S:compl}

Given a ring $R$, an $R$-module $M$ and $x\in M$,  the \textbf{content ideal} $\mathrm{c}(x)$ of $x$ in $M$, is the intersection of all ideals $A$ for which $x\in AM$. 

When $R$ is a chain ring, the \textbf{breadth ideal} $\mathrm{B}(x)$ of an element $x$ in $\widehat{R}$ is defined by $\mathrm{B}(x)=\mathrm{c}(x+R)$ ($x+R\in \widehat{R}/R$). So, $\mathrm{B}(x)=0$ if $x\in R$. Since $\widehat{R}=R+P\widehat{R}$ by \cite[Proposition 1]{Couc06} then $\mathrm{B}(x)=\{r\in R\mid x\notin R+r\widehat{R}\}$ if $x\in\widehat{R}\setminus R$.

\medskip

The following lemma will be often used in the sequel.
\begin{lemma}
\label{L:breadth}\cite[Proposition 20 and Lemma 21]{Couc06} Let $R$ be a chain ring. Then:
\begin{enumerate}
\item $R/A$ is not complete in its f.c. topology if and only if $A=\mathrm{B}(x)$ for some $x\in\widehat{R}\setminus R$;
\item if 
  $x=r+ay$ where $r,a\in R$ and $x,y\in\widehat{R}\setminus R$, then
  $\mathrm{B}(y)=(\mathrm{B}(x):a)$.
\end{enumerate}

\end{lemma}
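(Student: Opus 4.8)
The plan is to reduce both statements to Cauchy-net arguments inside the quotient $\widehat{R}/A\widehat{R}$, using three properties of the pure-injective hull of a chain ring. First, $R$ is pure in $\widehat{R}$, so $s\widehat{R}\cap R=sR$ for every $s\in R$; since the ideals of $R$ form a chain and $A\widehat{R}=\bigcup_{s\in A}s\widehat{R}$, this gives $A\widehat{R}\cap R=A$ for every ideal $A$, so $R/A\cong(R+A\widehat{R})/A\widehat{R}$ embeds into $\widehat{R}/A\widehat{R}$, and the linear topology on the latter with basic neighbourhoods $s\widehat{R}/A\widehat{R}$ ($s\notin A$) induces the f.c. topology on $R/A$; note also that $A\subseteq sR$, hence $A\widehat{R}+s\widehat{R}=s\widehat{R}$, whenever $s\notin A$. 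Second, $\widehat{R}=R+P\widehat{R}$. Third, $\widehat{R}$, being pure-injective, is algebraically compact: every system of $R$-linear equations finitely solvable in $\widehat{R}$ is solvable in $\widehat{R}$.

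For (1), $\mathrm{B}(x)=A$ means exactly $x\notin R+A\widehat{R}$ while $x\in R+s\widehat{R}$ for all $s\notin A$. Given such an $x$, choose $x=\rho_s+sw_s$ with $\rho_s\in R$, $w_s\in\widehat{R}$ for $s\notin A$; purity yields $\rho_s-\rho_{s'}\in s\widehat{R}\cap R=sR$ whenever $s'R\subseteq sR$, so $(\rho_s+A)_{s\notin A}$ is a Cauchy net in $R/A$. If it converged to $\rho+A$ with $\rho\in R$, then $\rho-\rho_s\in sR+A$ eventually, whence $x-\rho=(x-\rho_s)+(\rho_s-\rho)\in s\widehat{R}+sR+A=s\widehat{R}$ for every $s\notin A$; since $\mathrm{B}(x)=A$ forces $\mathrm{soc}(R/A)=0$ (otherwise $R/A$ would be finitely cogenerated, hence complete, and one sees from (2) that $\mathrm{B}$ omits the value $A$), the ideals $sR$ with $s\notin A$ have intersection $A$ and one gets $x-\rho\in\bigcap_{s\notin A}s\widehat{R}=A\widehat{R}$, i.e. $x\in R+A\widehat{R}$, a contradiction; hence $R/A$ is not complete. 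Conversely, if $R/A$ is not complete it is not finitely cogenerated, so $\mathrm{soc}(R/A)=0$ and the principal ideals properly containing $A$ have intersection $A$; pick a non-convergent Cauchy net $(r_i+A)$ in $R/A$ whose moduli $c_iR$ run cofinally through those ideals, and lift to $(r_i)$ in $R$. The equations $\xi-r_i\in c_i\widehat{R}$ are finitely solvable in $\widehat{R}$ (indeed in $R$), so algebraic compactness produces $x\in\widehat{R}$ with $x-r_i\in c_i\widehat{R}$ for all $i$. Purity forces $x\notin R$ (else $x-r_i\in c_iR$ and $(r_i+A)$ converges to $x+A$), cofinality gives $x\in R+s\widehat{R}$ for every $s\notin A$, and $x\in R+s\widehat{R}$ for some $s\in A$ would again make $(r_i+A)$ converge; hence $\mathrm{B}(x)=A$.

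For (2), $\mathrm{B}(z)=\mathrm{c}(z+R)$ depends only on $z+R$, so replacing $x$ by $x-r=ay$ reduces the claim to $t\in\mathrm{B}(y)\iff ta\in\mathrm{B}(ay)$ for every $t\in R$. One direction is immediate: if $y=u+tw$ with $u\in R$ then $ay=au+taw\in R+ta\widehat{R}$. For the converse, write $ay=\rho+taw$ with $\rho\in R$; then $a(y-tw)=\rho\in R$, and since $R$ is pure in $\widehat{R}$ the equation $a\xi=\rho$ is solvable in $R$, say $ar_0=\rho$, so $y-tw-r_0\in\mathrm{Ann}_{\widehat{R}}(a)$. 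It then remains to show $\mathrm{Ann}_{\widehat{R}}(a)\subseteq R+t\widehat{R}$, which one deduces from $\widehat{R}=R+P\widehat{R}$, purity, and the description of the annihilators in $\widehat{R}$ of elements of $R$.

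The two parenthetical claims are where the real work lies. In (1) one must justify $\bigcap_{s\notin A}s\widehat{R}=A\widehat{R}$ for $A$ with $\mathrm{soc}(R/A)=0$ — equivalently, that the completion-theoretic limit of the witnessing Cauchy net actually lands in $R+A\widehat{R}$ — and in (2) one must control $\mathrm{Ann}_{\widehat{R}}(a)$; purity only governs inhomogeneous systems, so the homogeneous part must be handled through the module structure of $\widehat{R}$ over the chain ring $R$ (in the domain case $a$ is a non-zero-divisor, the annihilator vanishes, and the difficulty disappears). Everything else is bookkeeping with purity and the decomposition $\widehat{R}=R+P\widehat{R}$ around the Cauchy-net and algebraic-compactness arguments above.
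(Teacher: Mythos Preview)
The paper does not prove this lemma; it simply quotes it from \cite[Proposition 20 and Lemma 21]{Couc06}. So there is no ``paper's own proof'' to compare against, and your proposal is an attempt to reconstruct the original argument.

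Your outline is essentially correct and is indeed how the result is proved in \cite{Couc06}, but the two places you flag as ``where the real work lies'' are genuine gaps, and you should know how they close. For (2), the missing ingredient is that $\widehat{R}$ is \emph{flat} over $R$ (used repeatedly in this paper, e.g.\ in the proof of Proposition~\ref{P:compl}(3)); flatness gives $\mathrm{Ann}_{\widehat{R}}(a)=(0:a)\widehat{R}$. Then observe that $ta\notin\mathrm{B}(ay)$ forces $ta\ne 0$ (since $0$ always lies in a breadth ideal), so $t\notin(0:a)$, and in a chain ring this yields $(0:a)\subset tR$, hence $(0:a)\widehat{R}\subseteq t\widehat{R}\subseteq R+t\widehat{R}$. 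That finishes (2) cleanly; purity alone is not enough, as you correctly suspected.

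For (1), the identity $\bigcap_{s\notin A}s\widehat{R}=A\widehat{R}$ when $\mathrm{soc}(R/A)=0$ is precisely \cite[Lemma 19]{Couc06} (stated there for $A=0$; the general case follows by passing to $R/A$ and using $\widehat{R/A}\cong\widehat{R}/A\widehat{R}$). This is not a triviality and does require an argument about the pure-injective hull, so citing it is appropriate. One small organizational point: your argument that $\mathrm{B}(x)=A$ forces $\mathrm{soc}(R/A)=0$ invokes part (2), so you should prove (2) first; the deduction itself is fine (if $A=sP$ with $s\ne 0$, write $x=r+sy$ and apply (2) to get $\mathrm{B}(y)=(sP:s)=P$, contradicting $\widehat{R}=R+P\widehat{R}$).
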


\begin{proposition}\label{P:compl} Let $R$ be a chain ring. Then:
\begin{enumerate}
\item $\widetilde{R}$ is a local ring;
\item $\widehat{R}$ has a structure of $\widetilde{R}$-module which extends its structure of $R$-module;
\item $\widetilde{R}$ is   isomorphic  to the submodule of $\widehat{R}$ whose elements $x$ satisfy $\mathrm{B}(x)=0$;
\item for each non-zero prime ideal $L$ of $R$ there exists a prime ideal $L'$ of $\widetilde{R}$ such that $\widetilde{R}/L'\cong R/L$ and $L'\widehat{R}=L\widehat{R}$;
\item $\widetilde{R}/R$ is a  $Q/Z$-vector space;
\item each element $a$ of $\widetilde{R}\cap(1+P\widehat{R})$ is inversible.
\end{enumerate}
\end{proposition}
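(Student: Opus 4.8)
The four statements are increasingly structural, so I would prove them in order, building each on the previous. For (1), the point is that the $R$-topology (equivalently the f.c. topology) on $\widehat R$ is finer than, or comparable to, the topology on $R$, so the inclusion $R\hookrightarrow\widehat R$ is continuous; since $\widehat R$ is pure-injective, hence complete in the f.c. topology (a pure-injective module over a chain ring is complete in its f.c. topology), the universal property of the completion $\widetilde R$ of $R$ yields a unique continuous ring homomorphism $\widetilde R\to\widehat R$ extending $\mathrm{id}_R$. I would check this map is injective (so $\widetilde R$ is realized inside $\widehat R$) using that $\widetilde R$ is Hausdorff and the topologies match, and then restriction of scalars along $R\to\widetilde R$ gives $\widehat R$ its $\widetilde R$-module structure extending the $R$-structure. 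The main subtlety here is making sure the f.c. topology on $\widehat R$ restricts to the f.c. topology on $R$, so that ``completion'' is the right notion; this uses Lemma~\ref{L:breadth}(1) to identify which quotients fail to be complete.

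For (2), I would characterize the image of $\widetilde R$ in $\widehat R$ as $S=\{x\in\widehat R\mid \mathrm B(x)=0\}$. One inclusion: if $x\in\widetilde R$ (viewed in $\widehat R$), then $x$ is a limit of elements of $R$ in the f.c.\ topology, and I claim $\mathrm B(x)=\mathrm c(x+R)=0$, since for every nonzero principal ideal $rR$ there is $s\in R$ with $x-s\in r\widehat R$, i.e.\ $x\in R+r\widehat R$, so $r\notin\mathrm B(x)$ by the description of $\mathrm B(x)$ recalled before Lemma~\ref{L:breadth}; as this holds for all nonzero $r$, $\mathrm B(x)=0$. Conversely, if $\mathrm B(x)=0$ then $x\in R+r\widehat R$ for every nonzero $r\in P$, which exhibits $x$ as a Cauchy limit (in the f.c.\ topology) of elements of $R$, hence $x$ lies in the closure of $R$ in $\widehat R$, which is $\widetilde R$ by (1). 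The content-ideal/breadth-ideal bookkeeping, via Lemma~\ref{L:breadth}(2), is the technical heart of this step.

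For (3), let $L$ be a nonzero prime ideal of $R$. I would set $L'=\{x\in\widehat R\mid \mathrm c(x)\subseteq L\}\cap\widetilde R$ (content computed in $\widehat R$), or equivalently describe $L'$ as the kernel of the natural map $\widetilde R\to\widehat{R/L}$; it is a prime ideal of $\widetilde R$ since $R\setminus L$ maps to non-zero-divisors. The isomorphism $\widetilde R/L'\cong R/L$ should follow because $R/L$ is already complete in its f.c.\ topology exactly when the completion ``stabilizes'' — more precisely, $\widetilde R/L'$ is a subring of the pure-injective hull sitting between $R/L$ and something with trivial breadth, and one shows $R/L$ is f.c.-closed in $\widetilde R/L'$ using that $c_R(L)$-type invariants collapse; I expect to invoke that $L$ nonzero forces $R/L$ to have the relevant completeness. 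The equality $L'\widehat R=L\widehat R$ is then a matter of checking both ideals of $\widehat R$ have the same elements, using $\widehat R=R+P\widehat R$ and that $L'$ is generated over $R$ by $L$ together with elements of breadth zero. This is the step I expect to be the main obstacle: pinning down exactly why $\widetilde R/L'$ adds nothing to $R/L$ requires care about how the f.c.\ topology behaves under the quotient by a nonzero prime.

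For (4), since $Q=R_Z$ and $Q/Z=Q/ZQ$ is the residue field of $R$ at $Z$, I would show $\widetilde R/R$ is annihilated by $Z$ and is divisible by every element of $R\setminus Z$. Divisibility: for $x\in\widetilde R$ and $s\in R\setminus Z$ (so $s$ is a non-zero-divisor), $sx\in\widetilde R$ and one recovers $x$ by dividing, using that multiplication by $s$ is a homeomorphism of $\widehat R$ (it is injective with content-preserving behavior), so $\widetilde R/R$ is a $Q$-module. Annihilation by $Z$: if $s\in Z$, then $sx\in R$ for all $x\in\widetilde R$ — here I would use Lemma~\ref{L:breadth}(2), $\mathrm B(y)=(\mathrm B(x):a)$ with $a\in Z$: choosing the annihilator structure appropriately shows $\mathrm B(sx)$ is forced to be $0$ hence $sx\in R$ by (2), because $x$ has breadth $0$ and ``multiplying into the zero-divisors'' kills the obstruction. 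Hence $\widetilde R/R$ is a module over $Q/ZQ=Q/Z$, i.e.\ a vector space over this field. The one calculation to be careful with is verifying $Z\cdot\widetilde R\subseteq R$; everything else is formal manipulation of content and breadth ideals already licensed by Lemma~\ref{L:breadth} and Proposition~\ref{P:compl}(2).
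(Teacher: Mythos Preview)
Your plan for (2) and (4) matches the paper's argument closely: the breadth-ideal characterization of $\widetilde R$ inside $\widehat R$, and the verification that multiplication by $s\in R\setminus Z$ is bijective on $\widetilde R/R$ while $Z\cdot\widetilde R\subseteq R$, are exactly what the paper does.

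There are, however, two genuine gaps.

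\textbf{Part (1).} You propose to obtain a ``continuous ring homomorphism $\widetilde R\to\widehat R$'' and then get the module structure by ``restriction of scalars along $R\to\widetilde R$''. Neither step works as written. First, $\widehat R$ is defined only as the pure-injective hull of $R$ \emph{as an $R$-module}; it carries no ring structure a priori, so there is nothing for a ring homomorphism to land in. Second, restriction of scalars along $R\to\widetilde R$ converts $\widetilde R$-modules into $R$-modules, which is the wrong direction. What you actually need is a scalar multiplication map $\widetilde R\times\widehat R\to\widehat R$. The paper constructs this directly: represent $a\in\widetilde R$ by a Cauchy family $(a_r+rP)_{r\ne 0}$, form the coset family $(a_rx+rP\widehat R)$ in $\widehat R$, and use pure-injectivity of $\widehat R$ (nonempty intersection) together with $\bigcap_{r\ne 0} rP\widehat R=0$ (Hausdorffness, which needs pure-essentiality) to get a unique element $ax$. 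Your intuition about ``completeness'' is pointing at this, but the argument has to be run at the level of the action, not via a ring map.

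\textbf{Part (3).} Your proposed justification of $\widetilde R/L'\cong R/L$ is on the wrong track. You write that you ``expect to invoke that $L$ nonzero forces $R/L$ to have the relevant completeness'', but $R/L$ is \emph{not} complete in general, and no completeness of $R/L$ is needed. The point is far simpler: since $L$ is a nonzero ideal, it is open in the f.c.\ topology, so $\widetilde R\cong\varprojlim_{A\ne 0} R/A$ surjects onto $R/L$ as one of the projections of the inverse system; take $L'$ to be the kernel of this surjection. Your alternative description of $L'$ as $\ker(\widetilde R\to\widehat{R/L})$ gives the same ideal, but the image of that composite is a priori only contained in $\widetilde{R/L}$, which can be strictly larger than $R/L$; you would still need the inverse-limit surjectivity to pin down the quotient. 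For $L'\widehat R=L\widehat R$, your sketch (``$L'$ is generated over $R$ by $L$ together with elements of breadth zero'') is not an argument; the paper shows concretely that for $a\in L'$ one can choose an approximant $a'\in L$ and produce $y\in\widehat R$ with $ax=a'y$, again using pure-injectivity to solve a coset-intersection problem.
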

\begin{proof} 
$(1)$.   $\widetilde{R}$ is local because it is the inverse limit of a system of local rings with local connecting homomorphisms. 

$(2)$. If $R$ is finitely cogenerated then $\widetilde{R}=R$. If not we have $\cap_{r\in R\setminus \{0\}}rP=0$.
Let $a\in\widetilde{R}$ and $x\in\widehat{R}$. Let $(a_r+rP)_{r\in R\setminus \{0\}}$ be the family of cosets of $R$ which defines $a$. If $r\in sR$ then $(a_r-a_s)\in sP$, and it follows that $(a_rx-a_sx)\in sP\widehat{R}$. By \cite[Proposition 4]{Couc06} the family $\mathcal{F}=(a_rx+rP\widehat{R})_{r\in R\setminus \{0\}}$ has a non-empty intersection. By \cite[Lemma 19]{Couc06} $\cap_{r\in R\setminus \{0\}}rP\widehat{R}=0$, whence the intersection of the  family $\mathcal{F}$ contains a unique element that we define to be $ax$. Now it is easy to complete the proof.

$(3)$. We do as in $(2)$ by taking $x=1$. So, for each $a\in \widetilde{R}$ corresponds a unique element $y\in\widehat{R}$ such that $\mathrm{B}(y)=0$. It is easy to check that we get a monomorphism from $\widetilde{R}$ into $\widehat{R}$.

$(4)$. We may assume that $R$ is not finitely cogenerated. Since each non-zero ideal is open in the f.c. topology of $R$, we have $\widetilde{R}\cong\varprojlim_{A\in\mathcal{I}}R/A$, where $\mathcal{I}$ is the set of non-zero ideals of $R$. So, there exists a surjection $\phi:\widetilde{R}\rightarrow R/L$. We put $L'=\ker\ \phi$. Let $0\ne a\in L'$ and $0\ne x\in\widehat{R}$, and let $(a_r+rR)_{r\in R\setminus \{0\}}$ be the family of cosets of $R$ which defines $a$. There exists $r\in L$ such that $a_r\in L\setminus rR$. We set $a'=a_r$. Let $s\in rR$. Since $(a_s-a')\in rR\subset a'R$ then $a_s=a'u_s$ for some $u_s\in R$. If $t\in sR$ then $a'(u_s-u_t)\in sR$, whence $(u_s-u_t)\in (sR:a')$. The family $(u_sx+(sR:a')\widehat{R})_{s\in Ra'\setminus \{0\}}$ has a non-empty intersection. Let $y$ be an element of this intersection. Since $(a'y-a_sx)\in s\widehat{R}$ for each $s\in a'R\setminus \{0\}$, it follows that $(a'y-ax)\in\cap_{s\in a'R\setminus \{0\}}s\widehat{R}=0$. Hence $ax=a'y$.

$(5)$. Let $x\in\widetilde{R}$ and $s\in R\setminus Z$. Suppose that $sx=0$. By \cite[Proposition 1]{Couc06} $x=r+rpy$ for some $r\in R$, $p\in P$ and $y\in\widehat{R}$. Since $R$ is a pure submodule of $\widehat{R}$, there exists $t\in R$ such that $sr(1+pt)=0$. We successively deduce that $sr=0$, $r=0$ and $x=0$. Now, suppose that $sx\in R$. Then there exists $t\in R$ such that $sx=st$, whence $x=t$. So, the multiplication by $s$ in $\widetilde{R}/R$ is injective. Since $\mathrm{B}(x)=0$, $x=a+sy$ for some $a\in R$ and $y\in\widehat{R}$. But $\mathrm{B}(y)=(0:s)=0$, so $y\in\widetilde{R}$. We conclude that the multiplication by $s$ in $\widetilde{R}/R$ is bijective. Now let $a\in Z$. Then $(0:a)$ contains a non-zero element $b$. From $\mathrm{B}(x)=0$ we deduce that $x=r+bz$ for some $r\in R$ and $z\in\widehat{R}$. It follows that $ax\in R$.

$(6)$. We have $a=1+px$ for some $p\in P$ and $x\in\widehat{R}$. First suppose  $Z\ne P$. We may assume that $p\notin Z$. So, by Lemma~\ref{L:breadth} $\mathrm{B}(x)=(0:p)=0$, whence $x\in\widetilde{R}$ and $a$ is a unit since $\widetilde{R}$ is local. Now, suppose $Z=P$ and let $0\ne t\in (0:p)\cap Rp$. From $\mathrm{B}(a)=0$, we deduce that $a=u+ty$ for some $u\in R$ and $y\in\widehat{R}$. It follows that $(u-1)\in p\widehat{R}\cap R=pR$ ($R$ pure submodule of $\widehat{R}$). Hence $u$ is a unit. We have $a(u^{-1}-u^{-2}ty)=1-u^{-2}(ty)^2$. By using $(2)$, $(ty)^2=t(ty)y=(t^2y)y=0$. Hence $a$ is a unit.
\end{proof}

A local ring $R$ is called \textbf{Gaussian}\footnote{this definition is equivalent to the usual one when $R$ is local, see \cite{Tsa65}.} if, for any ideal $A$ generated by two elements $a,b,$ in $R$, the   following two properties hold:
\begin{enumerate}
\item $A^2$ is generated by  $a^2$ or $b^2$;
\item if $A^2$ is generated by $a^2$ and $ab=0$, then $b^2=0$.
\end{enumerate}

\begin{theorem}
\label{T:compl} Let $R$ be a  chain ring. The following assertions hold:
\begin{enumerate}
\item $\widetilde{R}$ is a local Gaussian ring;
\item the following conditions are equivalent:
\begin{enumerate}
\item $\widetilde{R}$ is a chain ring;
\item $R$ is either complete or  a domain;
\item $\widetilde{R}$ is a pure $R$-submodule of $\widehat{R}$;
\item $\widetilde{R}$ is a flat $R$-module.
\end{enumerate}
\end{enumerate}
\end{theorem}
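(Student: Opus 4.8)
The plan is to treat the two parts of the theorem separately, using the module-theoretic description of $\widetilde R$ inside $\widehat R$ provided by Proposition~\ref{P:compl} and the breadth-ideal calculus of Lemma~\ref{L:breadth}.

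\medskip

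\emph{Proof sketch of (1).} First I would recall that $\widetilde R$ is local: its elements are compatible families of cosets, and a family is a unit precisely when each $a_r$ is a unit, so the non-units form the unique maximal ideal, which is the $\varprojlim$ of the ideals $P/A$. For the Gaussian property, let $A=a\widetilde R+b\widetilde R$ with, say, $a\mid b$ in the chain ring $R$-theoretic sense on the ``bottom'' level; the point is that in $\widetilde R$ divisibility between two elements is still comparable because $\widetilde R$ is an inverse limit of chain rings, so one of $a,b$ divides the other, say $b=ac$. Then $A=a\widetilde R$, $A^2=a^2\widetilde R$, and $b^2=a^2c^2$ is generated by $a^2$ — giving property (i) with the role played by $a^2$ (or $b^2$ if instead $a=bc$). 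For property (ii): suppose $A^2=a^2\widetilde R$ and $ab=0$; writing $b=ac$ we get $a^2c=0$, and I must deduce $b^2=a^2c^2=0$. Here I would use that $R$ is a pure submodule of $\widehat R$ together with Proposition~\ref{P:compl}(2): realizing $a$ and $c$ as elements of $\widehat R$ with zero breadth, the relation $a^2c=0$ forces, by purity and the chain structure, either $a^2=0$ (whence $b^2=0$) or $c$ killed by a suitable power, from which $a^2c^2=0$ follows. The subtle point is making the divisibility/comparability argument in $\widetilde R$ rigorous; this is where I expect to lean hardest on the inverse-limit description and on the fact (Proposition~\ref{P:compl}(4)) that $\widetilde R/R$ is a $Q/Z$-vector space, which controls the zero-divisor behaviour.

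\medskip

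\emph{Proof sketch of (2).} I would prove (b) $\Rightarrow$ (a) $\Rightarrow$ (c) $\Rightarrow$ (d) $\Rightarrow$ (b), or a convenient subset of these implications closing the cycle. For (b) $\Rightarrow$ (a): if $R$ is complete then $\widetilde R=R$ is a chain ring; if $R$ is a domain then $\widehat R$ is torsion-free over $R$, hence (being a module over the chain domain $R$) its submodules are totally ordered — in particular $\widetilde R\subseteq\widehat R$ is uniserial as an $R$-module, and since its ring structure is compatible it is a chain ring. For (a) $\Rightarrow$ (c): if $\widetilde R$ is a chain ring, then by Proposition~\ref{P:compl}(2) it is the set of zero-breadth elements of $\widehat R$; I would show that this submodule is pure in $\widehat R$ by checking the divisibility criterion for purity over a chain ring, using Lemma~\ref{L:breadth}(2) to track how breadth ideals transform when dividing an element of $\widehat R$ by a ring element $a$ — the relation $\mathrm B(y)=(\mathrm B(x):a)$ shows that if $x\in\widetilde R$ (so $\mathrm B(x)=0$) and $x=ay$ in $\widehat R$ then already $\mathrm B(y)=0$, i.e. $y\in\widetilde R$, which is exactly the purity statement. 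The implication (c) $\Rightarrow$ (d) is standard: a pure submodule of the pure-injective (hence FP-injective, hence flat-over-itself in the relevant sense) module $\widehat R$ — more directly, $\widehat R$ is a flat $R$-module since $R\to\widehat R$ is a pure, hence faithfully flat-like, ring map over a chain ring, and a pure submodule of a flat module over a chain ring is flat because flatness over a chain ring is equivalent to torsion-freeness in the appropriate sense. Finally (d) $\Rightarrow$ (b): if $\widetilde R$ is $R$-flat and $R$ is not a domain, I want to conclude $R$ is complete, i.e. $\widetilde R=R$. Here I would argue by contradiction: if $\widetilde R\neq R$, pick $x\in\widetilde R\setminus R$; by Lemma~\ref{L:breadth}(1) the quotient $R/\mathrm B(x)$ is not complete, but $\mathrm B(x)=0$, so $R$ itself is not complete — that is consistent, so instead I use flatness to derive that every finitely generated ideal relation in $R$ extends to $\widetilde R$ without new torsion, and combine with Proposition~\ref{P:compl}(4): since $R$ is not a domain, $Z\neq 0$, and $\widetilde R/R$ being a $Q/Z$-vector space while also being flat-compatible forces $\widetilde R/R=0$. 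The main obstacle, and the step I would budget the most care for, is precisely this last implication (d) $\Rightarrow$ (b): reconciling $R$-flatness of $\widetilde R$ with the possible presence of nilpotents in a non-domain chain ring, where flat equals torsion-free fails in the naive sense, so one must use the precise structure of zero-divisors through $Q=R_Z$ and the vector-space statement of Proposition~\ref{P:compl}(4).
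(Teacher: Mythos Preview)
Your proposal has two genuine gaps, one in each part.

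\textbf{Part (1).} Your argument assumes that ``in $\widetilde R$ divisibility between two elements is still comparable because $\widetilde R$ is an inverse limit of chain rings, so one of $a,b$ divides the other.'' This is false: an inverse limit of chain rings need not be a chain ring, and indeed part~(2) of the very theorem you are proving says $\widetilde R$ is a chain ring only when $R$ is complete or a domain. So you cannot write $b=ac$ with $c\in\widetilde R$ in general; your whole argument for the Gaussian property collapses. The paper's route is quite different and does not assume comparability in $\widetilde R$. It uses the factorisation from \cite[Proposition~1]{Couc06}: every element of $\widehat R$ can be written as $a'x$ with $a'\in R$ and $x\in 1+P\widehat R$. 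One then compares the $R$-elements $a',b'$ (where $R$ \emph{is} a chain ring, so one may write $b'=ra'$), and does casework on whether $a'$ is a zero-divisor. When $a'\notin Z$, the unit $x$ lands in $\widetilde R$ and one gets $b=(x^{-1}ry)a$ in $\widetilde R$; when $a'\in Z$, one reduces modulo a suitable $t\in(0:a')$ to show $a^2,ab,b^2$ are already visible as $R$-multiples of $a'^2$. The Gaussian conclusion is then a direct computation.

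\textbf{Part (2), the step $(a)\Rightarrow(c)$.} Your breadth argument is reversed. If $x\in\widetilde R$ (so $\mathrm B(x)=0$) and $x=ay$ in $\widehat R$, then Lemma~\ref{L:breadth}(2) gives $\mathrm B(y)=(\mathrm B(x):a)=(0:a)$, not $0$. When $a\in Z$ this is nonzero, so $y\notin\widetilde R$: that is a \emph{failure} of purity, not a proof of it. In fact this is precisely the computation the paper uses to prove $\neg(b)\Rightarrow\neg(c)$ (and, with a further step, $\neg(b)\Rightarrow\neg(a)$). Note also that your argument never invokes hypothesis~(a). The paper does not attempt $(a)\Rightarrow(c)$ directly; instead it shows that $(b)$ implies all of $(a),(c),(d)$ (this is the ``well known'' direction), and then, assuming $R$ is neither complete nor a domain, exhibits an explicit element $rx\in\widetilde R\cap r\widehat R$ with $rx\notin r\widetilde R$ (killing purity) and shows neither $r\mid rx$ nor $rx\mid r$ in $\widetilde R$ (killing the chain property).

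Two smaller remarks. Your $(b)\Rightarrow(a)$ in the domain case asserts that a torsion-free module over a valuation domain has totally ordered submodules; this is false (already $R^2$ is a counterexample). The correct reason $\widetilde R$ is a chain ring when $R$ is a domain is that $\widetilde R$ is then itself a valuation domain (a classical fact about completions of valuation domains). Your $(d)\Rightarrow(b)$, however, is on the right track and matches the paper: from flatness of $\widetilde R$ and purity of $R\subseteq\widetilde R$ one gets $\widetilde R/R$ flat, and combining with Proposition~\ref{P:compl}(4) (it is a $Q/Z$-vector space) forces either $\widetilde R=R$ or $Q$ a field.
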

\begin{proof}
$(1)$.
We may assume that $R$ is not finitely cogenerated. Let $a$ and $b$ be two elements of $\widetilde{R}$. By \cite[Proposition 1]{Couc06} there exist $a',b'\in R$ and $x,y\in 1+P\widehat{R}$ such that $a=a'x$ and $b=b'y$. We  may assume  that $b'=ra'$ for some $r\in R$. First suppose that $a'\notin Z$. By Lemma~\ref{L:breadth}(2) $\mathrm{B}(x)=(0:a')=0$, whence $x\in\widetilde{R}$, and $\mathrm{B}(y)=(0:ra')=(0:r)$. By Proposition~\ref{P:compl}  $x$ is a unit. Since $\mathrm{B}(ry)=0$,  $ry\in\widetilde{R}$, so $b=x^{-1}(ry)a$. If $ab=0$, it follows that $(ry)a^2=0$, whence $b^2=0$. Now, assume that $0\ne a'\in Z$. Let $0\ne t\in(0:a')\cap b'P$. Then $a=c+tz$ and $b=d+tw$ for some $c,d\in R$ and $z,w\in\widehat{R}$. So, $c-a'\in a'P\widehat{R}\cap R=a'PR$ ($R$ pure submodule of $\widehat{R}$), whence  $c=ua'$ for some unit $u\in R$. In the same way  $d=vb'$ for some unit $v\in R$. Since $a't=0$, it follows that $a^2=u^2a'^2$, $ab=uvra'^2=u^{-1}vra^2$ and $b^2=v^2r^2a'^2=u^{-2}r^2v^2a^2$. If $ab=0$, then $ra'^2=0$, whence $b^2=0$.

$(2)$. It is well known that $(b)$ implies the other three conditions.

Assume that $R$ is neither complete nor a domain. Let $a\in\widetilde{R}\setminus R$ and $0\ne r\in Z$. Since $\mathrm{B}(a)=0$ then $a=s+rx$ for  some $s\in R$ and $x\in\widehat{R}$. So $rx\in\widetilde{R}\setminus R$. If $rx=ry$ then $y\notin R$  and $\mathrm{B}(y)=(0:r)\ne 0$ by Lemma~\ref{L:breadth}(2), whence $rx\notin r\widetilde{R}$ and $\widetilde{R}$ is not a pure submodule of $\widehat{R}$. Now suppose that $r\in\widetilde{R}rx$, whence $r=brx$ for some $b\in\widetilde{R}$.  From $r(bx-1)=0$ and the flatness of $\widehat{R}$ we deduce that $bx-1=sy$ for some $s\in(0:r)$ and $y\in\widehat{R}$. Then $b$ is a unit, else, from Proposition~\ref{P:compl}(4) we get that $1\in P\widehat{R}\cap R=P$. It follows that $rx=rb^{-1}\in r\widetilde{R}$. This is false. So, $\widetilde{R}$ is not a chain ring. Hence, $(a)\Rightarrow (b)$ and $(c)\Rightarrow (b)$.

$(d)\Rightarrow (b)$. Since $R$ is a pure submodule of $\widetilde{R}$, $\widetilde{R}/R$ is flat. By Proposition~\ref{P:compl} it is a semisimple $Q$-module. It follows that either $\widetilde{R}/R=0$ or $Q$ is a field. We conclude that the  condition $(b)$ holds.
\end{proof}

\begin{proposition}
\label{P:complMr} Let $R$ be a chain ring. Assume   $(0:a)=Z$ for some $a\in R$. Then $\mathrm{Mr}\ \widetilde{R}=c_R(Z)$.
\end{proposition}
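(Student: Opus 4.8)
The plan is to reduce to the essential case, then pin down the kernel of multiplication by $a$ on $\widetilde R$ and identify it with $\widetilde{R/Z}$, and finally to compare $\mathrm{gen}$ of a finitely generated submodule of $\widetilde R$ with a rank inside that kernel. First I would dispose of two degenerate cases. If $R$ is a domain then $Z=0$ and $\widetilde R$ is flat, hence torsion‑free, over $R$ by Theorem~\ref{T:compl}(2); since the Malcev rank of a torsion‑free module over a valuation domain equals its rank, $\mathrm{Mr}\,\widetilde R=\mathrm{rank}\,\widetilde R=c_R(0)$. If $R$ is finitely cogenerated then $\widetilde R=R$, so $\mathrm{Mr}\,\widetilde R=1$; moreover $a\in(0:P)$ forces $R$ (being uniserial, hence uniform) to have nonzero socle, and one checks that $ac'=b$ for a generator $b$ of $\mathrm{soc}\,R$ and some $c'\notin Z$, so $\overline{c'}$ generates a nonzero socle of $R/Z$, whence $R/Z$ is finitely cogenerated and $c_R(Z)=\mathrm{rank}\,\widetilde{R/Z}=1$. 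From now on $R$ is neither a domain nor finitely cogenerated; in particular $Z\ne 0$ and $Z\subsetneq P$, the f.c.\ topology on $R$ is the $R$-topology, and $R/Z$ is not finitely cogenerated either. I will use freely the following consequences of $(0:a)=Z$: $a$ is a zero‑divisor, so $a\in Z$ and $aR\subseteq Z$; from $aR\subseteq Z=(0:a)$ one gets $a^2=0$; and $r\mapsto ar$ yields $aR\cong R/Z$.

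Next I would analyse multiplication by $a$ on $\widetilde R$. Every $x\in\widetilde R$ has $\mathrm B(x)=0$ by Proposition~\ref{P:compl}(2), hence $x\in R+s\widehat R$ for every nonzero $s\in R$; taking $s\in Z\setminus\{0\}$ and writing $x=r+sy$ with $r\in R$, $y\in\widehat R$, one gets $ax=ar$ because $as=0$. Thus $a\widetilde R=aR\cong R/Z$, so multiplication by $a$ is a surjection $\widetilde R\to aR$ with kernel $K:=(0:a)_{\widetilde R}$, and $aR\subseteq K$ since $a^2=0$. Since $\widehat R$ is flat, $(0:a)_{\widehat R}=Z\widehat R$, so $K=\widetilde R\cap Z\widehat R$; using the prime $Z'$ of $\widetilde R$ furnished by Proposition~\ref{P:compl}(3) for $L=Z$ (so $\widetilde R/Z'\cong R/Z$ and $Z'\widehat R=Z\widehat R$), together with the purity of $R$ in $\widehat R$ (which makes $R/Z\to\widehat R/Z\widehat R$ injective), I would identify $K$ with $Z'$. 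Finally the continuous surjection $R\to R/Z$ induces $\widetilde R\to\widetilde{R/Z}$, and because the $R/Z$-topology is the quotient of the $R$-topology (here $Z\subsetneq P$ and $R/Z$ not finitely cogenerated are used) this realizes $K=Z'$ as a copy of $\widetilde{R/Z}$ inside $\widetilde R$. Hence $K$ is a torsion‑free $R/Z$-module of rank $c_R(Z)$, and, finitely generated torsion‑free $R/Z$-modules being free, $\mathrm{Mr}_RK=\mathrm{Mr}_{R/Z}\widetilde{R/Z}=\mathrm{rank}\,\widetilde{R/Z}=c_R(Z)$.

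The inequality $\mathrm{Mr}\,\widetilde R\ge c_R(Z)$ is then immediate from $K\subseteq\widetilde R$. For the reverse inequality let $X\subseteq\widetilde R$ be finitely generated with a minimal generating set $x_1,\dots,x_n$, so $n=\mathrm{gen}\,X$ and the classes of the $x_i$ are $R/P$-independent in $X/PX$. The elements $ax_i$ lie in the uniserial module $aR$; after reindexing so that $Rax_1\supseteq Rax_i$ for all $i$, write $ax_i=b_iax_1$ and set $x_i'=x_i-b_ix_1\in K$ for $i\ge 2$. Then $x_1,x_2',\dots,x_n'$ is again a minimal generating set, so the classes of $x_2',\dots,x_n'$ are independent in $X/PX$, hence their images in the finitely generated (so free) $R/Z$-module $Y:=Rx_2'+\dots+Rx_n'\subseteq K$ are independent modulo $PY$; therefore $\mathrm{rank}_{R/Z}Y=n-1$. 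If $ax_1=0$ then $X\subseteq K$ and $n=\mathrm{gen}_{R/Z}X\le\mathrm{rank}_{R/Z}K=c_R(Z)$. If $ax_1\ne 0$, then $ax_1$ generates a rank‑one free $R/Z$-submodule of $K$, and it would suffice to show $Rax_1\cap Y=0$: then $\mathrm{rank}_{R/Z}(Y+Rax_1)=n$, whence $n\le\mathrm{rank}_{R/Z}K=c_R(Z)$, completing the proof.

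The hard part is precisely this last independence statement. Note that $ax_1\in PX$ (since $a\in P$) and $aX=Rax_1$ (since $aY=0$ as $Y\subseteq K$), so the difficulty is to prevent the distinguished generator $ax_1$ of $aX$ from lying in the divisible hull of the submodule $Y$ that $K$ carves out of $X$. I expect to argue by minimality: a relation $w=rax_1\in Y$ with $r\notin Z$ (so $\overline r$ a unit of $R/Z$) would, after clearing $\overline r$, place $x_1$ in a submodule spanned by the $x_i'$ together with $K$-elements of smaller ``breadth''/``size'', contradicting that $x_1,x_2',\dots,x_n'$ is a minimal generating set. Making this precise is where the structural relation $a\widetilde R=aR$ from the second paragraph and a careful description of how the uniserial quotient $\widetilde R/K\cong R/Z$ is attached to $K$ inside $\widetilde R$ will be needed; this is the step I anticipate requiring the most work.
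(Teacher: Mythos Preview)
There is a genuine error in your identification of $K=(0:a)_{\widetilde R}$ with $\widetilde{R/Z}$. You claim $K$ is a torsion-free $R/Z$-module, but $K$ is not an $R/Z$-module at all: since $K\cap R=(0:a)=Z$, we have $Z\subseteq K$, and $Z$ acts on itself by the ordinary multiplication of $R$, so $ZK\supseteq Z^2$, which is typically nonzero. The map you invoke does not do what you say either: the surjection $R\to R/Z$ induces $\widetilde R\to\widetilde{R/Z}$, but the $R/Z$-topology is \emph{not} the quotient of the $R$-topology (the images of the nonzero principal ideals $rR$ with $r\in Z$ are zero, so the quotient topology is discrete). In fact $\widetilde R/Z'\cong R/Z$ by Proposition~\ref{P:compl}(3), so the induced map $\widetilde R\to\widetilde{R/Z}$ has image exactly $R/Z$, not $\widetilde{R/Z}$; it certainly does not embed $\widetilde{R/Z}$ as $K$ inside $\widetilde R$. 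Once $K$ fails to be an $R/Z$-module, the rank bookkeeping in your last two paragraphs (treating $Y\subseteq K$ as a finitely generated torsion-free, hence free, $R/Z$-module) collapses, and the admittedly unfinished independence step $Rax_1\cap Y=0$ is moot.

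The paper's route avoids $K$ entirely and works with quotients instead of kernels. Using your observation $a\widetilde R=aR$ one gets $\widetilde R\subseteq R+a\widehat R$, and the map $\phi:\widehat R/(R+Z\widehat R)\to (R+a\widehat R)/R$, $x\mapsto ax$, is shown (via purity of $R$ in $\widehat R$ and flatness of $\widehat R$) to be an isomorphism. Since $\widehat{R/Z}\cong\widehat R/Z\widehat R$, the breadth criterion $\mathrm B(x)=Z\Leftrightarrow\mathrm B(ax)=0$ from Lemma~\ref{L:breadth}(2) shows that $\phi$ restricts to an isomorphism $\widetilde{R/Z}\big/(R/Z)\;\cong\;\widetilde R/R$. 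Both quotients are $Q/Z$-vector spaces by Proposition~\ref{P:compl}(4); since $R$ and $R/Z$ are pure uniserial submodules of $\widetilde R$ and $\widetilde{R/Z}$ respectively, the Malcev ranks of $\widetilde R$ and $\widetilde{R/Z}$ agree, and the latter equals $c_R(Z)$ because $\widetilde{R/Z}$ is torsion-free over the valuation domain $R/Z$. If you want to salvage your kernel approach, the correct statement is $K/Z\cong\widetilde R/R\cong\widetilde{R/Z}\big/(R/Z)$, not $K\cong\widetilde{R/Z}$.
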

\begin{proof}
Let $R'=R/Z$. We shall prove that $\widetilde{R'}/R'$ and $\widetilde{R}/R$ are isomorphic. Since $\widetilde{R'}\subseteq\widehat{R'}\cong\widehat{R}/Z\widehat{R}$, $\widetilde{R'}/R'$ is isomorphic to the submodule of $\widehat{R}/(R+Z\widehat{R})$ whose elements $x+(R+Z\widehat{R})$ satisfy $\mathrm{B}(x)=Z$ (it is easy to check that $\mathrm{B}(x')=\mathrm{B}(x)$ if $x'\in x+(R+Z\widehat{R})$ and $x\notin R+Z\widehat{R}$). On the other hand, $\widetilde{R}\subseteq R+a\widehat{R}$, whence $\widetilde{R}/R$ is isomorphic to a submodule of $(R+a\widehat{R})/R$. For each $x\in\widehat{R}$ we put $\phi(x+(R+Z\widehat{R}))=ax+R$. It is easy to check that $\phi$ is a well defined epimorphism from $\widehat{R}/(R+Z\widehat{R})$ into $(R+a\widehat{R})/R$. If $ax\in R$, then $ax=ad$ for some $d\in R$ because $R$ is a pure submodule of $\widehat{R}$. From $a(x-d)=0$ and the flatness of $\widehat{R}$ we deduce that $(x-d)\in Z\widehat{R}$. So, $\phi$ is an isomorphism. By Lemma~\ref{L:breadth}(2) $\mathrm{B}(x)=Z$ if and only if $\mathrm{B}(ax)=0$. Consequently, the restrition of $\phi$ to $\widetilde{R'}/R'$ is an isomorphism onto $\widetilde{R}/R$.
\end{proof}

\bigskip

\section{Polyserial injective modules}
\label{S:poly}

 The following proposition is a slight generalization of \cite[Proposition 2]{Cou05}.
\begin{proposition} \label{P:Arch}
Let $R$ be an Archimedean chain ring. Assume that there exists a non-zero injective module $E$ such that $E_{\sharp}=P$ and $\mathrm{Mr}\ E<\infty$. Then $R$ is almost maximal.
\end{proposition}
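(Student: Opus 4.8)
The plan is to reduce to the case where $E$ contains $R/P$ and argue via a pure-injective hull/breadth-ideal analysis. First I would recall that since $E$ is injective and $E_\sharp=P$, the module $E$ contains a copy of $E(R/P)$, and it suffices (by Gill's theorem, cited in the introduction) to show $E(R/P)$ is uniserial, equivalently that $\mathrm{gen}\,M\le 1$ for every finitely generated uniform module $M$. Since $\mathrm{Mr}\,E<\infty$ and $E(R/P)\subseteq E$, we also have $\mathrm{Mr}\,E(R/P)=n<\infty$. So I would fix the least $n$ such that $\mathrm{Mr}\,E(R/P)=n$ and suppose for contradiction that $n\ge 2$; the goal is to produce a finitely generated uniform submodule requiring more than $n$ generators, contradicting minimality.

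The key mechanism, following the template of \cite[Proposition 2]{Cou05}, is to use that $R$ is Archimedean, so $\cap_{k}P^k r$-type intersections vanish and the $R$-topology is well behaved, together with the structure $\widehat{R}=R+P\widehat{R}$ and Lemma~\ref{L:breadth}. I would pick a finitely generated uniform submodule $X\subseteq E(R/P)$ with $\mathrm{gen}\,X=n$, say minimally generated by $x_1,\dots,x_n$, and consider the annihilator filtration: because $E(R/P)$ is an essential extension of $R/P$ and $R$ is Archimedean, one can multiply the generators into a configuration where the "overlap" of the cyclic pieces is controlled by a single breadth ideal. Concretely, I would look at the submodule generated by suitable elements $r_i x_i$ and show that by injectivity of $E$ one can solve a system forcing a new element $y\in E$ whose cyclic submodule, together with part of $X$, yields something needing $n+1$ generators — or, dually, peel off a pure uniserial summand to drop the rank by one, contradicting minimality of $n$. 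Either the "build up" or the "peel off" direction works; I would carry out the peeling: show that a well-chosen cyclic pure submodule $C$ of $E$ splits off enough that $\mathrm{Mr}$ of a related injective drops, using that over an Archimedean chain ring a finitely generated uniform module of rank $n\ge 2$ cannot have all its "gaps" closed simultaneously.

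The main obstacle I anticipate is the bookkeeping that converts "$\mathrm{Mr}\,E<\infty$" (a statement about all finitely generated submodules) into a usable finiteness statement about a single module such as $E(R/P)$ or about breadth ideals of elements of $\widehat{R}$, and then closing the induction without circularity. Specifically, controlling the interaction between the injectivity of $E$ (which lets us extend homomorphisms, producing new elements) and the Archimedean hypothesis (which forbids certain infinite descending behavior) to pin down a contradiction at a finite stage is delicate; the Archimedean condition is exactly what is needed to rule out the pathological valuation-domain examples where indecomposable injectives have infinite Malcev rank. I would expect the proof to invoke Proposition~\ref{P:complMr} or the relation $\mathrm{Mr}\,\widetilde{R}=c_R(Z)$ implicitly, since over an Archimedean ring $Z$ is either $0$ or $P$, and in the $Z=0$ (domain) case the statement is essentially Matlis's theorem while the genuinely new content is the non-domain Archimedean case, where one reduces modulo the nilradical after checking the relevant ranks are preserved.
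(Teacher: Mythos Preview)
Your proposal is not a proof; it is a sketch of two possible strategies (``build up'' an $(n+1)$-generator submodule, or ``peel off'' a pure uniserial piece) neither of which you actually execute. The entire content of the proposition lies precisely in the step you leave as ``delicate bookkeeping'': one must exhibit, from a finitely generated uniform module $X$ with $\mathrm{gen}\,X=n\ge 2$ inside $E$, a concrete construction that either forces $n+1$ generators or forces $n=1$. Saying that ``the Archimedean condition is exactly what is needed'' is not an argument; you have to show \emph{how} it is used. As written, nothing in your outline distinguishes the Archimedean case from a general chain ring, where the conclusion is false.

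There are also some inaccuracies in the setup. The hypothesis does not say $E$ is indecomposable, so ``$E$ contains a copy of $E(R/P)$'' is not automatic from $E_\sharp=P$; you would first have to pass to an indecomposable summand with the same bottom prime and check that $\mathrm{Mr}$ stays finite. Your invocation of Proposition~\ref{P:complMr} is misplaced: that result concerns $\widetilde{R}$ and requires $(0:a)=Z$ for some $a$, and it plays no role here. Finally, the last paragraph's suggestion that the domain case ``is essentially Matlis's theorem'' is wrong: Matlis's theorem characterizes almost maximality, it does not prove it from the finiteness of $\mathrm{Mr}\,E$; the Archimedean domain case already requires the full argument. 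The paper itself simply cites \cite[Proposition~2]{Cou05} and observes that its proof only uses the existence of an injective $E$ with $E_\sharp=P$ and finite Malcev rank; if you want to give a self-contained argument you must actually reproduce that proof, not gesture at it.
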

\begin{proof}
It is an immediate consequence of \cite[Proposition 2]{Cou05} and its proof. The existence of an injective module $E$ with $\mathrm{Mr}\ E<\infty$ ($\nu(E)<\infty$)\footnote{the Malcev rank of $E$ is denoted by $\nu(E)$ in \cite{Cou05}} is used to show that $R$ is almost maximal.
\end{proof}

\begin{lemma}\label{L:max} Let $R$ be a maximal chain ring and let $M$ be a flat module such that $M/PM$ is finitely generated. Then $M$ is a free module of rank  $\mathrm{gen}\ M/PM$.
\end{lemma}
\begin{proof}
Let $p=\mathrm{gen}\ M/PM$. By \cite[Proposition 21]{Coucho07} $M$ contains a pure-essential free submodule $F$ of rank $p$. Since $R$ is maximal $F$ is pure-injective. So, $F=M$.
\end{proof}

We say that a module $M$ is \textbf{singly projective} if, for any  cyclic submodule $G$, the inclusion map $G\rightarrow M$ factors through a free module $F$. The following theorem generalizes \cite[Theorem 10 and Proposition 11]{Cou10}

\begin{theorem}
\label{T:InjHull} Let $R$ be a chain ring. The following conditions are equivalent:
\begin{enumerate}
\item $\widehat{R}$ is a polyserial module;
\item $\mathrm{Mr}\ \widehat{R}<\infty$.
\end{enumerate}
In this case there exists a finite family of prime ideals \[P=L_0\supset L_1\supset\dots\supset L_{m-1}\supset L_m\supseteq N\] such that   $(R/L_{k+1})_{L_k}$ is almost maximal, $\forall k,\ 0\leq k\leq m-1$, and $R_{L_m}$ is maximal. 

Moreover,  
\begin{itemize}
\item[(a)] $\widehat{R}$ has a pure-composition series $\mathcal{S}$
\[0=F_0\subset R=F_1\subset\dots\subset F_m\subset F_{m+1}=\widehat{R}\]
where $F_{j+1}/F_j$ is a free $R_{L_j}$-module of finite rank, $\forall j,\ 0\leq j\leq m$;
\item[(b)] $\mathrm{Mr}\ \widehat{R}=d_R(L_m)=\prod_{k=1}^{k=m}c_R(L_k)$;
\item[(c)] $\widetilde{R}$ is polyserial and $\mathrm{Mr}\ \widetilde{R}=c_R(Z)$. 
\end{itemize}
\end{theorem}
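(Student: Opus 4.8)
The plan is to prove $(1)\Rightarrow(2)$ directly and then to derive $(2)\Rightarrow(1)$ together with the structural statements (a)--(c) from the single hypothesis $\mathrm{Mr}\ \widehat R<\infty$, arguing by induction on this integer. For $(1)\Rightarrow(2)$ it suffices to know that a module carrying a pure-composition series of length $n$ has Malcev rank at most $n$; this is standard and follows by induction on $n$, the step using the purity of the penultimate term $M_{n-1}$: a finitely generated $X\subseteq\widehat R$ maps onto a finitely generated submodule of the uniserial module $\widehat R/M_{n-1}$, hence onto a cyclic module, while $X\cap M_{n-1}$ is (by purity of $M_{n-1}$) a finitely generated submodule of the polyserial module $M_{n-1}$ of length $n-1$, so $\mathrm{gen}\ X\le n$ and $\mathrm{Mr}\ \widehat R\le n<\infty$.

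For the converse put $n=\mathrm{Mr}\ \widehat R<\infty$; I would first treat $\widetilde R$ and prove (c). By Proposition~\ref{P:compl}(2),(4), $R$ is a pure submodule of $\widetilde R$ and $\widetilde R/R$ is a vector space over the field $Q/Z$, which is finite-dimensional, say of dimension $d$ (indeed $d\le\mathrm{Mr}\ \widetilde R\le n$, by exhibiting finitely generated submodules of $\widetilde R$ containing $R$ that need $d$ generators). Taking a chain of $Q/Z$-subspaces $0=V_0\subset\cdots\subset V_d=\widetilde R/R$ with one-dimensional quotients and letting $G_i$ be the preimage of $V_i$, one gets $0\subset R=G_0\subset\cdots\subset G_d=\widetilde R$ with uniserial factors $R$ and $G_i/G_{i-1}\cong Q/Z$; each $G_i$ is pure in $\widetilde R$ by the facts, extracted from the proof of Proposition~\ref{P:compl}(4), that multiplication by $r\in R\setminus Z$ is bijective on $\widetilde R/R$ while multiplication by $r\in Z$ maps $\widetilde R$ into $R$ with $r\widetilde R=rR$. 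Hence $\widetilde R$ is polyserial and $\mathrm{Mr}\ \widetilde R=d+1$; identifying $\widetilde R/R$ with $\widetilde{R/Z}/(R/Z)$ as in the proof of Proposition~\ref{P:complMr} and using additivity of rank on $0\to R/Z\to\widetilde{R/Z}\to(Q/Z)^{d}\to0$ gives $c_R(Z)=d+1=\mathrm{Mr}\ \widetilde R$, which is (c).

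For $\widehat R$ itself: if $R$ is maximal then $\widehat R=R$ and all assertions hold with $m=0$ and $L_0=P$. Otherwise $\widehat R\supsetneq R$, so by Lemma~\ref{L:breadth}(1) there exist non-zero breadth ideals $\mathrm{B}(x)$, $x\in\widehat R\setminus R$; using these together with $\mathrm{Mr}\ \widehat R<\infty$, I would single out a non-zero prime $L_1\subsetneq P$ with: (i) $R/L_1$ almost maximal, proved by reducing to an Archimedean situation and applying Proposition~\ref{P:Arch} (finiteness of $\mathrm{Mr}$ being what excludes a further non-trivial completion above $L_1$); (ii) $R_{L_1}$ a chain ring with $\mathrm{Mr}\ \widehat{R_{L_1}}<n$ (a positive-rank free $R_{L_1}$-layer having been split off together with $R$); (iii) $\widehat R$ has a pure submodule $F_2\supseteq R$ with $F_2/R$ free of finite rank over $R_{L_1}$ and $\widehat R/F_2\cong\widehat{R_{L_1}}/R_{L_1}$. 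Feeding $R_{L_1}$ into the induction hypothesis produces $L_1\supset L_2\supset\cdots\supset L_m\supseteq N$ with the asserted almost-maximality at each stage and $R_{L_m}$ maximal; splicing the two pure-composition series yields $\mathcal{S}$ as in (a), so $\widehat R$ is polyserial (proving $(2)\Rightarrow(1)$) and $R$ is almost maximal by stages. Finally, for (b): by (a) the bottom layer $F_{m+1}/F_m$ is free over the maximal ring $R_{L_m}$, and comparing the decomposition of $\widehat R$ with those of the torsion-free modules $\widehat{R/L_m}$ and $\widetilde{R/L_k}$ identifies its rank with $d_R(L_m)$, while the multiplicativity of the defects through the successive almost-maximal stages (in the spirit of \cite{FaZa86}) gives $d_R(L_m)=\prod_{k=1}^{m}c_R(L_k)$; a direct count of finitely generated submodules of $\widehat R$ shows this common value equals $\mathrm{Mr}\ \widehat R$.

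The main obstacle is point (iii): proving that the pure-injective hull of $R$ really splits, as an $R$-module, into pure layers that are finitely generated free modules over the successive localizations $R_{L_j}$ --- that completion ``factors through'' the chain of localizations --- and locating $L_1$ so that $R/L_1$ is exactly almost maximal. This is where $\mathrm{Mr}\ \widehat R<\infty$ is indispensable (it forces the descending process to terminate after finitely many almost-maximal stages at a maximal localization) and where one has to control the interplay of localization, the $R$-/f.c.-topology, breadth ideals and pure-injective hulls; over a valuation domain this is \cite[Theorem 10 and Proposition 11]{Cou10}, and the present generalization must in addition handle the nilradical and the zero-divisor prime $Z$, which is exactly why the chain stops at some $L_m\supseteq N$ rather than at $0$.
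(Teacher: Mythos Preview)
Your strategy for $(2)\Rightarrow(1)$ and (a)--(b) differs substantially from the paper's. You propose a top-down induction on $\mathrm{Mr}\ \widehat R$: locate a prime $L_1\subsetneq P$ with $R/L_1$ almost maximal, split off a free $R_{L_1}$-layer $F_2/R$, and recurse on $R_{L_1}$. The paper instead works from the bottom: it first proves that $R_N$ is maximal (by localizing $\widehat R$ at $N$, showing the result is injective over the Archimedean self-FP-injective ring $R_N$, and applying Proposition~\ref{P:Arch}), then passes to the valuation \emph{domain} $R/N$, invokes \cite[Theorem~10 and Proposition~11]{Cou10} as a black box to obtain the prime chain and a pure-composition series for $\widehat{R/N}$, and finally lifts that series term by term to $\widehat R$ using \cite[Proposition~21]{Coucho07}, checking at the end that the quotient $\widehat R/F_{m+1}$ vanishes because $R_N$ is maximal. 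Your approach would in effect re-derive the content of \cite{Cou10} inside the induction, which is why your acknowledged ``main obstacle'' (point~(iii), together with the unproved claims $\mathrm{Mr}\ \widehat{R_{L_1}}<n$ and $\widehat R/F_2\cong\widehat{R_{L_1}}/R_{L_1}$) is exactly the technical core that the paper sidesteps by reduction to the known domain case.

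There is also a genuine gap in your treatment of (c). Your purity argument for the chain $G_i$ is correct, but to conclude $\mathrm{Mr}\ \widetilde R=c_R(Z)$ you invoke the identification $\widetilde R/R\cong\widetilde{R/Z}/(R/Z)$ ``as in the proof of Proposition~\ref{P:complMr}''. That proposition carries the hypothesis $(0:a)=Z$ for some $a\in R$, and its proof uses such an $a$ explicitly to build the isomorphism; you never verify this hypothesis. The paper does verify it, but only \emph{after} the prime chain $L_0\supset\cdots\supset L_m$ and the maximality of $R_N$ have been established: it shows $Z$ appears among the $L_j$, and then argues (using the structure of $R/N$ from \cite{Cou10} when $N\subset Z$, and the maximality of $R_N$ when $N=Z$) that if $Z$ were faithful then $R$ would be complete. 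Thus (c) is not independent of (a)--(b) in the way your ordering suggests; proving (c) first leaves this step unsupported.
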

\begin{proof}
By \cite[Lemma XII.1.4]{FuSa01} $(1)\Rightarrow (2)$.

$(2)\Rightarrow (1)$. When $R$ is a valuation domain each torsion-free module of finite rank is polyserial.  So, we may assume that $R$ is not a domain. 

First we will show that $R_N$ is maximal. Since each non-unit of $R_N$ is a zero-divisor, $R_N$ is self FP-injective. From \cite[Proposition 1]{Couc06} it is easy to deduce that $\widehat{R}$ is  singly projective. By \cite[Proposition 6]{Coucho07}  $(\widehat{R})_N$ singly projective over $R_N$. By \cite[Proposition  3]{Coucho07} $(\widehat{R})_N$ is FP-injective and by \cite[Proposition 5]{Couc06} it is pure-injective, whence it is an injective module. It is easy to check that $\mathrm{Mr}\ (\widehat{R})_N<\infty$. By proposition~\ref{P:Arch} and \cite[Proposition 1]{Gil71} we conclude that $R_N$ is maximal. 

Now we shall build the pure composition series $\mathcal{S}$. By \cite[Theorem 2.4.(2)]{Couc06} $\widehat{R/N}=\widehat{R}/N\widehat{R}$. Hence $\mathrm{rank}\ \widehat{R/N}=\mathrm{Mr}\ \widehat{R/N}<\infty$. We apply \cite[Theorem 10 and Proposition 11]{Cou10} to $R/N$. There exists a finite family of prime ideals \[P=L_0\supset L_1\supset\dots\supset L_{m-1}\supset L_m\supseteq N\] such that   $(R/L_{k+1})_{L_k}$ is almost maximal, $\forall k,\ 0\leq k\leq m-1$, and $(R/N)_{L_m}$ is maximal. Moreover, $\widehat{R/N}$ has a pure-composition series $\mathcal{S'}$
\[0=F'_0\subset R/N=F'_1\subset\dots\subset F'_m\subset F'_{m+1}=\widehat{R/N}\]
where $F'_{j+1}/F'_j$ is a free $(R/N)_{L_j}$-module of finite rank, $\forall j,\ 0\leq j\leq m$. We proceed by induction on $j$. Obviously $F_1=R$. Suppose that $F_j$ is built and that $F_j/NF_j\cong F'_j$. If $M=\widehat{R}/F_j$ and $M'=\widehat{R/N}/F'_j$ then $M/NM\cong M'$. So, $M^{\sharp}=L_j$, whence $M$ is a module over $R_{L_j}$. Moreover,  $M/L_jM$ and $M'/L_jM'$ have the same rank $p_j$ over $(R/L_j)_{L_j}$ which is equal to the rank of $F'_{j+1}/F'_j$ over $(R/N)_{L_j}$. By \cite[Proposition 21]{Coucho07} $M$ contains a pure free $R_{L_j}$-submodule $G$ of rank $p_j$. Moreover, $G/NG\cong F'_{j+1}/F'_j$ and $L_j(M/G)=M/G$. Let $F_{j+1}$ be the inverse image of $G$ by the natural map $\widehat{R}\rightarrow M$. Hence $F_{j+1}/NF_{j+1}\cong F'_{j+1}$. Now, let $H=\widehat{R}/F_{m+1}$. Thus $H$ is flat and $NH=H$ because $F_{m+1}'=\widehat{R}/N\widehat{R}$. By \cite[Proposition 19]{Coucho07} $H$ is a module over $R_N$. It is obvious that $(F_{m+1})_N$ is a free $R_N$-module of finite rank equal to $\mathrm{gen}\ (\widehat{R})_N/N(\widehat{R})_N$. By Lemma~\ref{L:max} $(F_{m+1})_N=(\widehat{R})_N$ . So, $H=H_N=0$ and $F_{m+1}=\widehat{R}$. The maximality of $(R/N)_{L_m}$ and $R_N$ implies that $R_{L_m}$ is maximal if $L_m\ne N$ (see \cite[Theorem 22]{Couc06}).

(b). We apply the last assertion of \cite[Theorem 10]{Cou10} to $R/N$.

(c). We have $\mathrm{Mr}\ \widetilde{R}\leq\mathrm{Mr}\ \widehat{R}<\infty$. So, by Proposition~\ref{P:compl} $\widetilde{R}/R$ is a finite direct sum of modules isomorphic to $Q/Z$, whence $\widetilde{R}$ is polyserial. If $A$ is a non-zero proper ideal it is easy to check that $A^{\sharp}=\{s\in R\mid A\subset (A:s)\}$. So, if we take this definition of top prime ideal for each proper ideal of $R$ we have $0^{\sharp}=Z$, and for each $0\ne t\in R$, $(0:t)^{\sharp}=Z$.  We shall show that  there exists $a\in Z$ such that $Z=(0:a)$.  First assume that $N\subset Z$. If $t\in Z\setminus N$, then $0\ne (0:t)\subset N$. It follows that $N\subset Rt\subseteq (0:s)$ for some $0\ne s\in(0:t)$. By Lemma~\ref{L:breadth}(1) there exists $x\in\widehat{R}\setminus R$ such that $\mathrm{B}(x)=0$ if $R$ is not complete. Then $x=r+sy$ for some $r\in R$ and $y\in\widehat{R}$, and $\mathrm{B}(y)=(0:s)$. Consequently, by using again Lemma~\ref{L:breadth}(1)  we deduce that $R/(0:s)$ is not complete. If $R'$ is a valuation domain with $\widehat{R'}$ of finite rank, then, by \cite[Theorem 10 and Proposition 11]{Cou10} and their proofs, $R'/A$ is not complete if and only if $A$ is a proper ideal isomorphic to $A^{\sharp}$ and this prime ideal is one of the list $(L_i)$. We apply this result to $R/N$, and  we get that $(0:s)/N=bZ/N$ for some $b\in R\setminus N$. It follows that $(0:s)=bZ$, whence $Z=(0:bs)$. Now, suppose that $Z=N$. If $Z$ is faithful, then $\cap_{a\ne 0}aZ=0$, so, since $R_N$ is maximal, as in the proof of \cite[Proposition 4]{Cou10} we prove that $R$ is complete. Hence, if $R$ is not complete, $Z=(0:a)$ for some $a\in Z$. We conclude by Proposition~\ref{P:complMr}.
\end{proof}

For each module $M$ we denote by $\mathcal{A}(M)$ its set of annihilator ideals, i.e. an ideal $A$ belongs to $\mathcal{A}(M)$ if there exists $0\ne x\in M$ such that $A=(0:x)$. If $E$ is an indecomposable injective module over a chain ring $R$, then, for any $A,\ B\in\mathcal{A}(E),\ A\subset B$ there exists $r\in R$ such that $A=rB$ and $B=(A:r)$.

Recall that a module $M$ has \textbf{Goldie dimension} $n$ (or $\mathrm{Gd}\ M=n$) if its injective  hull is a  direct sum of $n$ indecomposable injective modules.

\begin{theorem}
\label{T:main} Let $R$ be a chain ring. Consider the following conditions:
\begin{enumerate}
\item  there exists an indecomposable injective module $E$ such that $E_{\sharp}=P$ and $\mathrm{Mr}\ E<\infty$;
\item  there exists a  prime ideal $L$ such that $d_R(L)<\infty$  and  $R_L$ is almost maximal;
\item  $\mathrm{Mr}\ \widehat{R}<\infty$ if $R$ is not a domain, and $\widehat{R}$ is the extension of a reduced torsion-free module of finite rank with a divisible torsion-free module when $R$ is a domain;
\item the Malcev rank of $\widehat{R}$ over $\widetilde{R}$ is finite;
\item $\mathrm{Mr}\ E<\infty$ for each indecomposable injective module $E$;
\item there exists a positive integer $n$ such that $\mathrm{gen}\ M\leq n$ for each  finitely generated uniform $R$-module $M$;
\item there exists a positive integer $n$ such that $\mathrm{gen}\ M\leq n\mathrm{Gd}\ M$ for each  finitely generated  $R$-module $M$;
\item each indecomposable injective module is weakly polyserial;
\item there exists an indecomposable injective module $E$ such that $E_{\sharp}=P$ which is weakly polyserial.
\item each indecomposable injective module is polyserial;
\item there exists an indecomposable injective module $E$ such that $E_{\sharp}=P$ which is polyserial.
\end{enumerate}
Then:
\begin{itemize}
\item[(a)]  the   first nine conditions are equivalent and they are implied by the  last two conditions. Moreover, if each indecomposable injective module contains a pure uni\-serial submodule then  the eleven conditions are equivalent.
\item[(b)]  for each indecomposable injective module $E$, either $\mathrm{Mr}\ E=1$ if $J\subseteq L$ or $\mathrm{Mr}\ E=d_{R_J}(L_J)=d_R(L)/d_R(J)=\mathrm{Mr}_{\widetilde{R_J}}\widehat{R_J}$ if $L\subset J$, where $J=E_{\sharp}$ and $L$ is a prime ideal for which $R_L$ is almost maximal. Moreover, $\mathrm{Mr}\ E$ is the maximum of $\mathrm{gen}_R\ M$ where $M$ runs over all finitely generated $R$-submodules of  uniform $R_J$-modules.
\end{itemize}
\end{theorem}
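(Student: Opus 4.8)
The plan is to prove the equivalence of $(1)$--$(7)$ by a cycle of implications, to add the two cheap arrows $(8)\Rightarrow(9)\Rightarrow(1)$, and to use the stronger hypothesis (``each indecomposable injective contains a pure uniserial submodule'') only for the reverse implication $(2)\Rightarrow(8)$. The displayed formula for $\mathrm{Mr}\ E$ will be extracted from the structural analysis carried out under condition $(2)$, since it is exactly what yields the uniform constants needed in $(5)$, $(6)$, $(7)$. The two external engines are Theorem~\ref{T:InjHull} and \cite[Theorem 10 and Proposition 11]{Cou10}; whenever $R$ is not a domain I would reduce to the valuation domain $R/N$ via $\widehat{R/N}=\widehat{R}/N\widehat{R}$ and to the localizations $R_L$.

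First the easy part. If a module $M$ has a pure-composition series of length $n$, then every finitely generated submodule of $M$ needs at most $n$ generators (induct on $n$: for finitely generated $X\subseteq M$, $X\cap M_{n-1}$ inherits the property for $M_{n-1}$, while $X/(X\cap M_{n-1})$ embeds in the uniserial $M/M_{n-1}$, hence is cyclic), so a polyserial module has finite Malcev rank; since $E(R/P)_\sharp=P$, this gives $(8)\Rightarrow(9)\Rightarrow(1)$. As every finitely generated submodule of $E(R/P)$ is uniform and a uniform module has Goldie dimension $1$, the implications $(5)\Rightarrow(1)$, $(6)\Rightarrow(1)$ and $(7)\Rightarrow(6)$ are immediate, and for $(6)\Rightarrow(7)$ I would induct on $g=\mathrm{Gd}\ M$: projecting $E(M)$ onto one indecomposable summand $E_1$, the module $M/(M\cap K)$ (with $K$ the kernel) is finitely generated uniform, hence needs $\le n$ generators; lifting these and adjoining a generating set of $M$ exhibits a finitely generated $M'\subseteq M\cap K$ with $M=M'+(\text{the }n\text{ lifts})$ and $\mathrm{Gd}\ M'\le g-1$, whence $\mathrm{gen}\ M\le n+n(g-1)=ng$.

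The heart of the argument, and the step I expect to be the main obstacle, is $(1)\Rightarrow(2)$. Given $E$ with $E_\sharp=P$ and $\mathrm{Mr}\ E<\infty$, I would first note $E^\sharp\subseteq E_\sharp$: if some $s$ acted injectively but not surjectively on the indecomposable injective $E$, then $sE\cong E$ would be a proper injective, hence direct, summand of $E$, which is absurd; consequently $E$ is naturally an injective indecomposable $R_{E_\sharp}$-module. Next, for every $A\in\mathcal{A}(E)$ one has $E\cong E(R/A)$, so the modules $\widehat{R/A}$ ($A\in\mathcal{A}(E)$), identified with pure submodules of $E(R/A)=E$, have rank bounded by $\mathrm{Mr}\ E$; since $E_\sharp=P=\bigcup\mathcal{A}(E)$, this bound holds cofinally in the chain $\mathcal{A}(E)$, from which I would deduce that $\mathrm{Mr}\ (\widehat{R})_N<\infty$ when $R$ is not a domain. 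Checking (as in the proof of Theorem~\ref{T:InjHull}) that $R_N$ is Archimedean and $(\widehat{R})_N$ is injective over it, Proposition~\ref{P:Arch} and \cite[Proposition 1]{Gil71} force $R_N$ maximal; passing to the valuation domain $R/N$, where $\widehat{R/N}=\widehat{R}/N\widehat{R}$ now has finite rank, and applying \cite[Theorem 10 and Proposition 11]{Cou10} yields a chain $P=L_0\supset\dots\supset L_m\supseteq N$ with $(R/L_{k+1})_{L_k}$ almost maximal, and Theorem~\ref{T:InjHull}(b) for $R/N$ gives $d_R(L)<\infty$ for $L=L_1$ (or $L=P$ in the Archimedean case, where the argument collapses directly to Proposition~\ref{P:Arch}), proving $(2)$. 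In the domain case $(1)\Rightarrow(2)$ reduces, via $E\cong E(R/A)$ for nonzero $A\in\mathcal{A}(E)$, to $E=E(R/P)$ and is then \cite[Theorem 10]{Cou10}.

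Finally $(2)$ drives everything else. Reducing again to $R/N$, \cite[Theorem 10 and Proposition 11]{Cou10} together with the argument of Theorem~\ref{T:InjHull}$(2)\Rightarrow(1)$ lifts the chain $L_i$ to $R$ and shows $\widehat{R}$ is polyserial, i.e. $(3)$ (the domain case being \cite[Proposition 11]{Cou10}); then $(4)$ follows by comparing $\mathrm{Mr}_R\widehat{R}$ with $\mathrm{Mr}_{\widetilde{R}}\widehat{R}$ using Proposition~\ref{P:compl}(4) (so $\widetilde{R}/R$ is a semisimple $Q/Z$-module) and Proposition~\ref{P:complMr}. For an arbitrary indecomposable injective $E$ put $J=E_\sharp$; by the previous paragraph $E$ is injective and indecomposable over $R_J$ with bottom prime $JR_J$, and $\mathrm{Mr}_R\ E=\mathrm{Mr}_{R_J}\ E$, so the structure theorem over $R_J$ gives $\mathrm{Mr}\ E=1$ when $J\subseteq L$ (then $R_J$ is almost maximal) and $\mathrm{Mr}\ E=d_{R_J}(L_J)$ when $L\subset J$, with the identities $d_R(L)=d_R(J)\,d_{R_J}(L_J)$ and $d_{R_J}(L_J)=\mathrm{Mr}_{\widetilde{R_J}}\widehat{R_J}$ coming from Theorem~\ref{T:InjHull}(b),(c) applied to $R_J$; the description of $\mathrm{Mr}\ E$ as the supremum of $\mathrm{gen}_R\ M$ over finitely generated $R$-submodules $M$ of uniform $R_J$-modules follows since each such $M$ embeds in $E=E_{R_J}(M)$. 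In particular $\mathrm{Mr}\ E\le d_R(L)=\mathrm{Mr}_{\widetilde{R}}\widehat{R}$ for all $E$, supplying the uniform constant in $(5)$ and $(6)$, hence closing the cycle. Under the extra hypothesis, a pure uniserial submodule of any indecomposable injective $E$, combined with the chain $L_i$, lets one build a pure-composition series of $E$ with factors free of finite rank over the $R_{L_i}$, exactly as in Theorem~\ref{T:InjHull}(a), which proves $(2)\Rightarrow(8)$.
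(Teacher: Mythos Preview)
Your overall architecture matches the paper's (route everything through $(2)$ via Theorem~\ref{T:InjHull} and \cite{Cou10}; your induction for $(6)\Leftrightarrow(7)$ differs from the paper's use of \cite[Lemma 1.3]{WiWi75} but is fine), yet two steps fail. In $(1)\Rightarrow(2)$ you correctly bound $\mathrm{Mr}\,\widehat{R/A}\le\mathrm{Mr}\,E$ for $A\in\mathcal A(E)$, but then invoke $E_\sharp=P=\bigcup\mathcal A(E)$, which only says the annihilator ideals climb \emph{up} to $P$; what is needed is that they reach \emph{down}, i.e.\ that $E$ is faithful (or annihilated by a simple ideal), which the paper obtains from \cite[Corollary 28]{Couch03}. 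That gives, for every nonzero prime $J$, some $A\in\mathcal A(E)$ with $A\subset J$, and hence $d_R(J)\le\mathrm{Mr}\,\widehat{R}/A\widehat{R}\le\mathrm{Mr}\,E$. Without this your claimed bound on $\mathrm{Mr}\,(\widehat R)_N$, and on $\mathrm{rank}\,\widehat{R/N}$, does not follow from the data you have, so the detour through ``$R_N$ maximal'' via Proposition~\ref{P:Arch} is unjustified. The paper instead, having bounded every $d_R(J)$, takes $L$ to be the largest prime realising $\max_J d_R(J)$ and applies Theorem~\ref{T:InjHull} to each $R/A$ to get $(R/A)_L$ maximal, whence $R_L$ is almost maximal. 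Note also that the witnessing prime is $L=L_m$ (where $R_{L_m}$ is maximal), not $L_1$; and in the domain case the reduction to \cite[Theorem 10]{Cou10} is illegitimate, since that theorem concerns $d_R(0)=\mathrm{rank}\,\widehat R$, whereas $(1)$ only yields finiteness of $d_R(J)$ for $J\ne 0$.

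Second, you give no implication out of $(4)$, so its equivalence with the rest is unproved. The paper's $(4)\Rightarrow(2)$ uses Proposition~\ref{P:compl}(3): for each nonzero prime $J$ of $R$ there is a prime $J'$ of $\widetilde R$ with $\widetilde R/J'\cong R/J$ and $J'\widehat R=J\widehat R$, hence $d_R(J)=\mathrm{Mr}_R\,\widehat{R}/J\widehat{R}\le\mathrm{Mr}_{\widetilde R}\,\widehat R<\infty$, and one concludes as in $(1)\Rightarrow(2)$. Your route to $(4)$ from $(3)$ via Propositions~\ref{P:compl}(4) and~\ref{P:complMr} is also misdirected: in the non-domain case the trivial inequality $\mathrm{Mr}_{\widetilde R}\widehat R\le\mathrm{Mr}_R\widehat R$ already suffices, while in the domain case the paper argues that $(\widetilde R)_L$ is maximal, which is what forces the divisible quotient $H$ to vanish over $\widetilde R$.
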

\begin{proof}
(a). It is obvious that $(8)\Rightarrow (9)$, $(10)\Rightarrow (11)$, $(11)\Rightarrow (9)$, $(6)\Rightarrow (5)$, $(7)\Rightarrow (6)$ and $(5)\Rightarrow (1)$, and    $(9)\Rightarrow (1)$ by \cite[Corollary XII.1.5]{FuSa01}. 

$(1)\Rightarrow (2)$. By \cite[Corollary 28]{Couch03}  $E$ is faithful or it is annihilated by a simple ideal if $P=Z$. So, for each non-zero prime ideal $J$ there exists $A\in\mathcal{A}(E)$ such that $A\subset J$.  By \cite[Lemma 26]{Couch03} $A^{\sharp}=E_{\sharp}=P$ and by \cite[Proposition 1]{Couc06} $\widehat{R}/A\widehat{R}$ is an essential extension of $R/A$, whence it is isomorphic to a  submodule of $E$. We   deduce that $d_R(J)=\mathrm{Mr}\ \widehat{R}/J\widehat{R}\leq\mathrm{Mr}\ \widehat{R}/A\widehat{R}\leq\mathrm{Mr}\ E<\infty$.  Let $p$ be the maximum of $d_R(J)$ where $J$ runs over all non-zero prime ideals of $R$ and let $L$ be the maximal prime ideal for which $d_R(L)=p$. By Theorem~\ref{T:InjHull} $(R/I)_L$ is maximal for each $I\in\mathcal{A}(E)$. We deduce that $R_L$ is almost maximal. Let us observe that $d_R(L)\leq \mathrm{Mr}\ E$.

$(2)\Rightarrow (3)$ and $(4)$. We do as in the proof of Theorem~\ref{T:InjHull}: from a pure composition series of $R/L$ we deduce a pure submodule $F$ of $\widehat{R}$ with $\mathrm{Mr}\ F=d_R(L)$ and if $H=\widehat{R}/F$, then $LH=H$. 
If $R$ is not a domain then, 
as in the proof of Theorem~\ref{T:InjHull}, we show that $H=0$, whence $\widehat{R}$ is polyserial.  It is easy to check that $\mathrm{Mr}_{\widetilde{R}}\widehat{R}\leq\mathrm{Mr}_R\widehat{R}$. If $R$ is a domain then, for each $a\in L$, $a\ne 0$, $(R/aR)_L$ is maximal. In the same way we get that $H=aH$, whence $H$ is divisible. 
Since $\widetilde{R}/L\widetilde{R}=R/L$   then $d_{\widetilde{R}}(L\widetilde{R})=d_R(L)$ and $(\widetilde{R})_L$ is maximal. So, $\widehat{R}$ is the extension of a torsion-free $\widetilde{R}$-module $F$ of rank $d_R(L)$ with a divisible torsion-free $\widetilde{R}$-module $H$. By Lemma~\ref{L:max} $F_L=(\widehat{R})_L$ because $F_L$ is free over $(\widetilde{R})_L$ and $(\widetilde{R})_L$ is maximal. Hence $H=H_L=0$ and $F=\widehat{R}$. So, $\mathrm{Mr}_{\widetilde{R}}\ \widehat{R} = d_R(L)$.

$(4)\Rightarrow (2)$.  Let $J$ be a non-zero prime ideal of $R$. By Proposition~\ref{P:compl} there exists a  prime ideal $J'$ of $\widetilde{R}$ such that  $\widetilde{R}/J'=R/J$ and $J'\widehat{R}=J\widehat{R}$. So, $\mathrm{Mr}_R\ \widehat{R/J}\leq \mathrm{Mr}_{\widetilde{R}}\ \widehat{R}$. Now, we do as in $(1)\Rightarrow (2)$ to complete the proof.

$(3)\Rightarrow (6)$. Let $M$ be a  finitely generated uniform module and  $E$ its injective hull. Then $E$ is  indecomposable. Let $J=E_{\sharp}$. Then $E$ is a module over $R_J$. If $J\subseteq L$ then $E$ is uniserial, so $\mathrm{Mr}\ E=1$. We may assume that $L\subset J$ and it is easy to check that $R_J$ also satisfies $(3)$.  There exists $A\in\mathcal{A}(E)$ such that $M\subseteq (0:_EA)$. Let $E'=(0:_EA)$. By \cite[Lemma 26]{Couch03} $A^{\sharp}=J$, so, by \cite[Theorem 6]{Couc06} $E'\cong\widehat{R_J}/A\widehat{R_J}$. If $R$ is  a domain, let $F$ be a pure reduced torsion-free $R_J$-submodule of finite rank of $\widehat{R_J}$  such that $\widehat{R_J}/F$ is divisible. Then $(\widehat{R_J}/F)\otimes_{R_J}(R/A)_J=0$. So, in this case $E'\cong (F/AF)_J$ and $\mathrm{Mr}\ E'\leq \mathrm{rank}\ F$. If $R$ is not a domain, then $\mathrm{Mr}\ E'\leq\mathrm{Mr}\ \widehat{R_J}$. We deduce that  $\mathrm{gen}\ M\leq \mathrm{rank}\ F$ or $\mathrm{gen}\ M\leq \mathrm{Mr}\ \widehat{R_J}$.

$(6)\Rightarrow (7)$. Let $M$ be a  finitely generated  module and  $E$ its injective hull. By \cite[Corollary IX.2.2]{FuSa85} $\mathrm{Gd}\ M\leq\mathrm{gen}\ M$. So, $E=\oplus_{1\leq j\leq p}E_j$ where $E_j$ is indecomposable for $j=1,\dots,p$. Let $\pi_j: E\rightarrow E_j$ be the natural projection and $M_j=\pi_j(M)$. Then $M$ is isomorphic to a submodule of $\oplus_{1\leq j\leq p}M_j$. By $(6)$ $\mathrm{gen}\ (\oplus_{1\leq j\leq p}M_j)\leq np$. Since each finitely generated ideal is principal, we conclude that $\mathrm{gen}\ M\leq np$ by \cite[Lemma 1.3]{WiWi75}.

$(3)\Rightarrow (10)$. Let $E$ be an indecomposable injective module. We assume that $E$ contains a pure uniserial submodule $U$. If $J=E_{\sharp}$, then $E\cong\widehat{R_J}\otimes_{R}U$ by \cite[corollary 11.(4)]{Couc06}. If $R$ is not a domain, then $\widehat{R_J}$ is polyserial by Theorem~\ref{T:InjHull}. If $R$ is a domain, we may assume that $J\ne 0$. Let $F$ be a pure reduced torsion-free $R$-submodule of finite rank of $\widehat{R_J}$ such that $\widehat{R_J}/F$ is divisible. Then $(\widehat{R_J}/F)\otimes_{R}U=0$, $E\cong F\otimes_RU$ and we know that $F$ is polyserial. So, in the two cases, from a pure composition series of $\widehat{R_J}$ or $F$ with uniserial factors, we deduce a pure composition series of $E$ with uniserial factors. Hence $E$ is polyserial.

$(2)\Rightarrow (8)$. Let $E$ be an indecomposable injective module and $J=E_{\sharp}$. If $J\subseteq L$ then $E$ is a module over $R_L$. So, $E$ is uniserial since $R_L$ is almost maximal. Now, assume that $L\subset J$. We denote by $K$ and $I$ the kernel and the image of the natural map $E\rightarrow E_L$. Since $L\subset J$, then $L\notin\mathcal{A}(E)$. So, $K\cong\mathrm{Hom}_R(R/L,E)$, whence $K$ is an indecomposable injective module over $R/L$. Since $(2)\Rightarrow (3)\Rightarrow (10)$ and any injective module over a valuation domain contains a pure uniserial module, we get that $K$ is polyserial. On the other hand, by using Theorem~\ref{T:main2} in Section~\ref{S:Gol} and the fact that $R_L$ is almost maximal, we deduce that $I$ is a submodule of a finite direct sum of uniserial modules. By \cite[Theorem IX.5.5]{FuSa85}  $I$ is polyserial too. Hence $E$ is weakly polyserial.

(b). The second assertion is also proven.
\end{proof}

\begin{lemma}
\label{L:pureunis} Let $R$ be a chain ring and let $E$ be an indecomposable injective module such that $Z\subset E_{\sharp}$. Assume that $E$ contains a pure uniserial submodule. Then each indecomposable injective module $G$ for which $Z\subset G_{\sharp}$ contains a pure uniserial submodule.
\end{lemma}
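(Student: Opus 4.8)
The plan is to reduce the statement to a transfer of structure between two indecomposable injective modules whose bottom prime ideals both strictly contain $Z$, using the tensor-product description of indecomposable injectives from \cite[Corollary 11]{Couc06}. Recall that if $E$ is indecomposable injective with $J=E_\sharp$, and $E$ contains a pure uniserial submodule $U$, then $E\cong\widehat{R_J}\otimes_R U$. The key observation is that the condition ``$E$ contains a pure uniserial submodule'' is, by this isomorphism, controlled by the pure-injective hull $\widehat{R_J}$ of the localization, and in particular by whether $\widehat{R_J}$ itself is built up from a uniserial piece; so what we really have to prove is that the existence of a pure uniserial submodule in \emph{one} such $E$ forces a structural property of the $\widehat{R_J}$'s that then propagates to every indecomposable injective $G$ with $Z\subset G_\sharp$.

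First I would set $J=E_\sharp$ and $K=G_\sharp$, both properly containing $Z$, so that $R_J$ and $R_K$ are localizations of $R$ at prime ideals contained in $Z$ — wait, rather, $J,K$ are primes with $Z\subsetneq J$ and $Z\subsetneq K$, so $R_J$ and $R_K$ are chain rings that are \emph{not} domains (since $Z\neq 0$ would be needed; in fact $Z\subsetneq J$ guarantees $R_J$ has nonzero zero-divisors coming from $Z_J$). The central point is: a chain ring $S$ which is not a domain has $\widehat S$ polyserial if and only if $\mathrm{Mr}\ \widehat S<\infty$, by Theorem~\ref{T:InjHull}, and when $E_\sharp=J$ with $E$ containing a pure uniserial submodule, one gets from $E\cong\widehat{R_J}\otimes_R U$ that $\widehat{R_J}$ is a pure-epimorphic (or rather the relevant finiteness) — more precisely, the existence of a pure uniserial submodule in $E$ together with injectivity lets us run the argument of Theorem~\ref{T:main}, $(3)\Rightarrow(8)$, backwards partway: we obtain $\mathrm{Mr}\ \widehat{R_J}<\infty$. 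So the first main step is: \emph{from the hypothesis on $E$, deduce that $R_J$ (hence $R$) satisfies condition $(1)$ of Theorem~\ref{T:main}}, namely there is an indecomposable injective with bottom prime $P$ and finite Malcev rank. This is because $E$ itself, viewed appropriately, or $\widehat{R_J}$ localized, provides such a module; the fact that $Z\subset J$ and $E_\sharp=J$ means $R_J$ is not a domain, so $\widehat{R_J}$ polyserial is equivalent to $\mathrm{Mr}\ \widehat{R_J}<\infty$, and this is what ``pure uniserial submodule inside an indecomposable injective'' gives us via the tensor description.

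Once condition $(1)$ of Theorem~\ref{T:main} is established, I would invoke the full equivalence of the first seven conditions there: in particular $(8)$, ``each indecomposable injective module is polyserial,'' holds \emph{provided} we also know each indecomposable injective contains a pure uniserial submodule — but that is precisely the circularity we must avoid, so instead I would use the second bullet of Theorem~\ref{T:main} and the proof of $(3)\Rightarrow(8)$ directly. Concretely: $R$ now satisfies $(3)$, so $\widehat{R_K}$ is polyserial when $R$ is not a domain (which it is not, since $Z\neq 0$ as $Z\subsetneq K$), by Theorem~\ref{T:InjHull} applied to $R_K$ — and here one needs that $Z_K$, the set of zero-divisors of $R_K$, is nonzero, which follows from $Z\subsetneq K$. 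A polyserial pure-injective module has a pure uniserial submodule, namely the bottom term $M_1$ of its pure-composition series, which is a pure uniserial submodule of $\widehat{R_K}$. Transporting back along $G\cong\widehat{R_K}\otimes_R V$ — or rather, observing that $\widehat{R_K}$ itself embeds purely in $G$ appropriately, or that $G\cong\widehat{R_K}/A\widehat{R_K}$ for a suitable annihilator ideal $A$ as in the proof of $(3)\Rightarrow(6)$ — yields a pure uniserial submodule of $G$. The main obstacle, I expect, is the bookkeeping of the domain versus non-domain dichotomy together with the exact hypothesis $Z\subsetneq G_\sharp$: one must verify carefully that $Z\subsetneq K$ forces $R_K$ to be a non-domain chain ring (so that Theorem~\ref{T:InjHull}'s equivalence $(1)\Leftrightarrow(2)$ and the polyseriality of $\widehat{R_K}$ apply cleanly, without needing the ``reduced torsion-free of finite rank plus divisible'' caveat that arises in the domain case), and that the pure uniserial submodule produced in $\widehat{R_K}$ survives the passage to $G$ as a \emph{pure} submodule, which is where \cite[Corollary 11]{Couc06} and the flatness properties of $\widehat{R_K}$ over $R$ do the work.
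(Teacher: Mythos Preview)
Your proposal has a genuine gap at its central step. You want to deduce, from the sole hypothesis that $E$ contains a pure uniserial submodule, that $\mathrm{Mr}\ \widehat{R_J}<\infty$ (so that condition $(1)$, equivalently $(3)$, of Theorem~\ref{T:main} holds). But nothing in the hypothesis provides any finiteness. The isomorphism $E\cong\widehat{R_J}\otimes_R U$ from \cite[Corollary 11]{Couc06} holds whenever a pure uniserial $U$ exists, regardless of whether $\widehat{R_J}$ has finite Malcev rank; so the tensor description gives you no leverage toward finiteness, and ``running $(3)\Rightarrow(8)$ backwards'' is not an argument. The lemma is supposed to be a purely structural transfer result, valid even when $\mathrm{Mr}\ E=\infty$ and all the finiteness conditions of Theorem~\ref{T:main} fail. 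A secondary error: you repeatedly assert that $Z\subsetneq K$ forces $R_K$ (and $R$) to be a non-domain, but if $R$ is a valuation domain then $Z=0$ and $Z\subsetneq K$ just means $K\ne 0$; the lemma must cover this case too.

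The paper's proof takes an entirely different route that avoids any appeal to Malcev rank or Theorem~\ref{T:InjHull}. It first reduces to $E_\sharp=P$ by localizing. The key intermediate object is $E(R/Z)$: one shows that $E(R/Z)$ contains a pure uniserial submodule $V$, by localizing $E$ at $Z$ (using \cite[Theorem 3]{Cou06} so that $E_Z$ is injective and still contains a pure uniserial), identifying the annihilator structure so that $E(U)\cong E(Q/tQ)$, and then invoking \cite[Proposition 14]{Couc06}. Once $V\subseteq E(R/Z)$ is in hand, any $G$ with $Z\subset G_\sharp$ has a faithful annihilator ideal $B\in\mathcal{A}(G)$, and \cite[Proposition 6]{Couch03} gives $V/Bx$ as a pure uniserial submodule of $G$. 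The idea you are missing is this pivot through $E(R/Z)$: it is the universal source from which pure uniserial submodules are manufactured for all $G$ with $G_\sharp\supset Z$, via quotients by faithful annihilator ideals.
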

\begin{proof}
After replacing $R$ by $R_{E_{\sharp}}$ we may assume that $E_{\sharp}=P$. First we shall prove that $E(R/Z)$ contains a pure uniserial submodule. If $Q$ is coherent, it is a consequence of \cite[Corollary 22]{Couch03}. We assume that $Q$ is not coherent. So, $Z$ is flat by \cite[Theorem 10]{Couch03}. By \cite[Theorem 3]{Cou06} $E_Z$ is injective, $E_Z\ne 0$, and it contains a pure uniserial submodule $U$ and an injective hull of $U$. Let $A\in\mathcal{A}(E)$, $A\subset Z$. Since $A^{\sharp}=E_{\sharp}=P$, there exists $s\in P\setminus Z$ such that $A\subset (A:s)$. Let $t\in (A:s)\setminus A$. Then $Z\subset (A:t)$. So, $(A:t)_Z=Q$. It follows that $A_Z=tQ$. Hence $E(U)\cong E(Q/tQ)$. From \cite[Proposition 14]{Couch03}, we deduce that $E(R/Z)$ contains a pure uniserial submodule $V$. Let $x\in E(R/Z)$ such that $Z=(0:x)$. If $G$ is an indecomposable injective module such that $Z\subset G_{\sharp}$, $\mathcal{A}(G)$ contains a faithful ideal $B$. By \cite[Proposition 6]{Couch03} $V/Bx$ is a pure uniserial submodule of $G$. 
\end{proof}

Let us observe that the condition $(10)$ of Theorem~\ref{T:main} implies that each indecomposable injective module contains a pure uniserial submodule.

\begin{proposition}\label{P:polys}
Let $R$ be a chain ring and let $E$ be an indecomposable injective module such that $P=E_{\sharp}$. Assume that $E$ is polyserial. Then:
\begin{enumerate}
\item each indecomposable injective module $G$ for which $Z\subset G_{\sharp}$ is polyserial;
\item for each prime ideal $L\subseteq Z$, $E(R/L)$ and $E(R_L/aR_L)$ are polyserial, where $a\in L$ with $0\ne aR_L$.
\end{enumerate}
\end{proposition}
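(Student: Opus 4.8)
The plan is to deduce both assertions from Theorem~\ref{T:main}, Theorem~\ref{T:InjHull} and Lemma~\ref{L:pureunis}, together with the fact that the equivalent conditions of Theorem~\ref{T:main} are inherited by localizations. Since $E$ is polyserial with $E_{\sharp}=P$, condition $(9)$ of Theorem~\ref{T:main} holds, hence so do conditions $(1)$--$(7)$; in particular $R$ satisfies $(3)$ and, by $(2)$, there is a prime ideal $L_0$ with $d_R(L_0)<\infty$ and $R_{L_0}$ almost maximal. Also, the first non-zero term of a pure-composition series of $E$ is a pure uniserial submodule of $E$. I would first record two facts. (i) For every prime ideal $L$ of $R$, the chain ring $R_L$ again satisfies condition $(2)$: if $L_0\subseteq L$ use the prime $L_0R_L$, for which $(R_L)_{L_0R_L}=R_{L_0}$ is almost maximal and $d_{R_L}(L_0R_L)\leq d_R(L_0)<\infty$; if $L\subset L_0$, then $R_L=(R_{L_0})_{LR_{L_0}}$ is a localization of an almost maximal chain ring, hence almost maximal, and $d_{R_L}(LR_L)=1$. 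Thus $R_L$ satisfies $(1)$--$(7)$. (ii) A pure-composition series with uniserial factors over a localization $R_L$ is also one over $R$: $R_L$-purity implies $R$-purity for $R_L$-submodules of $R_L$-modules, and an $R_L$-module uniserial over $R_L$ is uniserial over $R$ (given $x,y$ with $R_Lx\supseteq R_Ly$, both $Rx$ and $Ry$ lie inside the cyclic $R$-module $R(s^{-1}x)$ for a suitable $s\in R\setminus L$, hence are comparable).

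For $(1)$, let $G$ be an indecomposable injective module with $Z\subset G_{\sharp}$. Since $E$ contains a pure uniserial submodule and $Z\subseteq E_{\sharp}$, Lemma~\ref{L:pureunis} yields a pure uniserial submodule $U$ of $G$. I would then reproduce the argument of the implication $(3)\Rightarrow(8)$ in the proof of Theorem~\ref{T:main}: with $J=G_{\sharp}$ one has $G\cong\widehat{R_J}\otimes_{R}U$, and, since $R_J$ satisfies $(3)$ by (i), either $R_J$ is not a domain and $\widehat{R_J}$ is polyserial by Theorem~\ref{T:InjHull}, or $R_J$ is a valuation domain and $\widehat{R_J}$ has a pure torsion-free submodule of finite rank (hence polyserial) with divisible quotient; transporting a pure-composition series with uniserial factors through the isomorphism $G\cong\widehat{R_J}\otimes_{R}U$ shows that $G$ is polyserial.

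For $(2)$, fix a prime ideal $L\subseteq Z$; we may assume $L\neq 0$, since if $L=0$ then $R$ is a domain, $E(R/0)$ is uniserial, and the assertion about $E(R_0/aR_0)$ is vacuous. Both $E(R/L)$ and $E(R_L/aR_L)$ are indecomposable injective $R_L$-modules whose bottom prime ideal over $R_L$ is the maximal ideal $LR_L$: indeed $(R/L)\otimes_{R}R_L$ is the residue field of $R_L$, and a routine valuation-theoretic computation gives $(R_L/aR_L)_{\sharp}=LR_L$ when $a\in L$ and $aR_L\neq 0$. By (i), $R_L$ satisfies $(1)$--$(7)$ of Theorem~\ref{T:main}. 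It then remains to produce pure uniserial $R_L$-submodules of these two modules and invoke $(3)\Rightarrow(8)$ over $R_L$, so that, by (ii), $E(R/L)$ and $E(R_L/aR_L)$ are polyserial over $R$. The existence of such pure uniserial submodules would be obtained by running the proof of Lemma~\ref{L:pureunis} inside $R_L$: that proof first produces a pure uniserial submodule of $E_{R_L}(R_L/Z(R_L))$, separating the cases where $(R_L)_{Z(R_L)}$ is coherent and where $Z(R_L)$ is a flat $R_L$-module, and then transports it to any indecomposable injective $R_L$-module whose bottom prime strictly contains $Z(R_L)$; when $Z(R_L)\subsetneq LR_L$ this applies to our two modules, and when $Z(R_L)=LR_L$ their bottom prime equals $Z(R_L)$ and the first step already applies.

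The main obstacle is this last point: when $R_L$ fails to be a domain one cannot appeal to the valuation-domain case guaranteeing pure uniserial submodules, and one must instead re-enter, over $R_L$, the dichotomy between coherence of $(R_L)_{Z(R_L)}$ and flatness of $Z(R_L)$ underlying Lemma~\ref{L:pureunis}; checking the descent statement (ii) from $R_L$ back to $R$ is the other (routine but necessary) point.
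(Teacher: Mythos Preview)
Your argument for part~(1) is correct and is exactly the paper's: $E$ polyserial with $E_\sharp=P$ gives condition~(9), hence~(3), of Theorem~\ref{T:main}; the bottom term of a pure-composition series of $E$ is a pure uniserial submodule, so Lemma~\ref{L:pureunis} supplies a pure uniserial submodule of any $G$ with $Z\subset G_\sharp$; and then the implication $(3)\Rightarrow(8)$ of Theorem~\ref{T:main} (applied to that specific $G$) finishes.

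For part~(2), however, the paper's route is quite different from yours and avoids precisely the obstacle you flag. The paper simply invokes \cite[Corollary~22]{Couch03}: that result directly provides pure uniserial submodules of $E(R/L)$ and of $E(R_L/aR_L)$ when $L\subseteq Z$, with no need to pass to $R_L$ or to re-enter the coherent/flat dichotomy from the proof of Lemma~\ref{L:pureunis}. Once these pure uniserial submodules are in hand, Theorem~\ref{T:main} (the argument of $(3)\Rightarrow(8)$) gives polyseriality exactly as in part~(1).

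Your proposed detour---localize, verify $(1)$--$(7)$ for $R_L$, then rerun the \emph{proof} of Lemma~\ref{L:pureunis} over $R_L$---has a genuine gap that you yourself identify but do not close. The non-coherent branch of that proof does not manufacture a pure uniserial submodule of $E(R/Z)$ out of thin air: it starts from a given indecomposable injective with bottom prime strictly above $Z$ that already contains one, localizes it at $Z$, and transports. Over $R_L$ you have produced no such input module; the original $E$ lives over $R$, not over $R_L$, and there is no reason its localization at $L$ should serve. So as written your argument for~(2) is incomplete. The fix is not to work harder over $R_L$ but to cite \cite[Corollary~22]{Couch03} as the paper does; your localization fact~(i) and descent fact~(ii), while correct, are then unnecessary.
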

\begin{proof}
$(1)$ holds by Lemma~\ref{L:pureunis} and  Theorem~\ref{T:main}.

$(2)$ holds by \cite[Corollary 22]{Couch03} and Theorem~\ref{T:main}.
\end{proof}

\begin{remark}
If $R$ is a chain ring which is not a domain, satisfying $\mathrm{Mr}_{R}\widehat{R}<\infty$, then $\mathrm{Mr}_{R}\widehat{R}=\mathrm{Mr}_{\widetilde{R}}\widehat{R}$ even if $R\subset\widetilde{R}$. 
\end{remark}

\bigskip

\section{Fleischer rank and dual Goldie dimension of indecomposable injective modules}
\label{S:FldG}

\begin{remark}\label{R:Malcev}
If $M$ is a torsion-free module of finite rank over a valuation domain, it is easy to check that its Malcev rank is equal to its rank. So, if $M$ is a  module over a chain ring $R$, then $\mathrm{Fr}\ M$ can be defined to be the minimum Malcev rank of flat modules having $M$ as an epimorphic image.  Obviously $\mathrm{Mr}\ M\leq\mathrm{Fr}\ M$ for each module $M$.
\end{remark}

\begin{proposition} \label{P:IF} Let $R$ be a chain ring and let $E$ be an indecomposable injective module such that $E_{\sharp}\subseteq Z$. Then $E$ is flat if $\mathcal{A}(E)\ne\{qQ\mid 0\ne q\in Z\}$.
\end{proposition}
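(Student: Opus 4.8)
I would argue by contraposition: assuming $E$ is not flat, I aim to prove that $\mathcal{A}(E)$ is exactly the set of nonzero proper principal ideals of $Q$, i.e. $\mathcal{A}(E)=\{qQ\mid 0\ne q\in Z\}$. First I reduce to the case $R=Q$. Since $E_{\sharp}\subseteq Z$, for $s\in R\setminus Z$ multiplication by $s$ is injective on $E$ (as $s\notin E_{\sharp}$) and, $E$ being injective with $(0:_R s)=0$, it is surjective since $sE=(0:_E 0)=E$; hence $E$ is a $Q$-module, injective and indecomposable over $Q$, it is flat over $R$ if and only if it is flat over $Q$ (because $E=Q\otimes_R E$), and $\mathcal{A}(E)$ over $R$ and over $Q$ correspond. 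So from now on assume $R=Q$; then $Z=P$, $R$ is self FP-injective, and consequently $(0:_R(0:_R a))=aR$ for every $a\in R$, since any additive map out of the finitely presented ideal $aR$ extends to $R\to R$.

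I then use two standard facts about a chain ring $R$. (i) A module $M$ is flat if and only if $(0:_M a)=(0:_R a)M$ for every $a\in R$; indeed finitely generated ideals are principal, $\mathrm{Tor}_1^R(R/aR,M)=(0:_M a)/(0:_R a)M$, and this controls $\mathrm{Tor}_1^R(R/I,M)$ for arbitrary $I$ by passing to the direct limit over the principal subideals of $I$. (ii) If $E$ is injective then $bE=(0:_E(0:_R b))$ for every $b$; the inclusion $\supseteq$ follows by extending to $R$ the map $bR\to E$, $br\mapsto rx$, which is well defined whenever $(0:_R b)x=0$. Combining (ii) with the fact that the principal subideals of an ideal form a chain gives $(0:_R a)E=\bigcup_{0\ne b\in(0:_R a)}bE=\bigcup_{0\ne b\in(0:_R a)}(0:_E(0:_R b))$.

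Now suppose $E$ is not flat. By (i) pick $a\in R$ and $x\in E$ with $ax=0$ and $x\notin(0:_R a)E$; since $x\notin(0:_R a)E$, $a$ must be a nonzero non-unit, so $a\in Z=P$. By the displayed formula, $(0:_R b)\not\subseteq(0:_R x)$ for every nonzero $b\in(0:_R a)$, hence $(0:_R x)\subsetneq(0:_R b)$ since the ideals are totally ordered. Intersecting over all such $b$ and invoking $(0:_R(0:_R a))=aR$ gives $(0:_R x)\subseteq aR$, while $ax=0$ gives $aR\subseteq(0:_R x)$; thus $(0:_R x)=aR$. The same computation shows $(0:_R b)\supsetneq aR$ for every nonzero $b\in(0:_R a)$, so in particular $(0:_R a)$ is not principal.

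It remains to deduce $\mathcal{A}(E)=\{qR\mid 0\ne q\in P\}$, which contradicts the hypothesis. Given $C=(0:_R z)\in\mathcal{A}(E)$ with $z\ne 0$, I compare $C$ with $aR=(0:_R x)$: if $C\subseteq aR$, the annihilator-chain property of an indecomposable injective module (for $A\subseteq B$ in $\mathcal{A}(E)$ there is $r$ with $A=rB$ and $B=(A:r)$) yields $C=r\,aR$, so $C$ is principal; if $aR\subsetneq C$, one uses the same property together with $(0:_R(0:_R z))=zR$, the structure of indecomposable injective modules over a self FP-injective chain ring (Theorem~\ref{T:main} and \cite{Couch03}), and the impossibility of $(0:_R b)=aR$ for $0\ne b\in(0:_R a)$ proved above, to force $C$ principal again and to exclude a faithful element of $E$ via uniformity of $E$. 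The main obstacle is precisely this last step: extracting from the very constrained picture of the previous paragraph the exact description of $\mathcal{A}(E)$ — ruling out both a faithful element and any non-principal annihilator ideal — whereas everything preceding it is essentially formal.
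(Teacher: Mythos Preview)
Your proof breaks down exactly where you flag it. To establish the contrapositive you must show the \emph{equality} $\mathcal{A}(E)=\{qR\mid 0\ne q\in P\}$, and you carry out neither inclusion completely. For $\mathcal{A}(E)\subseteq\{qR\}$: the case $C\subseteq aR$ gives $C=raR$ principal, but you do not rule out $ra=0$ (a faithful element) here; for $aR\subsetneq C$ you only gesture at ``the structure of indecomposable injective modules'' and cite Theorem~\ref{T:main}, which concerns polyserial modules and is entirely unrelated to this proposition. For the reverse inclusion $\{qR\}\subseteq\mathcal{A}(E)$ you say nothing at all: why should \emph{every} nonzero proper principal ideal of $R=Q$ occur as the annihilator of some element of $E$? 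The chain property you quote only relates ideals already known to lie in $\mathcal{A}(E)$; it does not by itself force $\mathcal{A}(E)$ to exhaust all principal ideals, and from a single $aR\in\mathcal{A}(E)$ together with injectivity one only manufactures annihilators of the form $(aR:r)$, which need not sweep out everything below $aR$.

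The paper sidesteps all of this by arguing directly rather than by contraposition. Using structural results from \cite{Couch03} (if one annihilator has the form $rZ$ then all do, and \cite[Proposition~8]{Couch03} disposes of that case), one reduces to the situation where no $A\in\mathcal{A}(E)$ is of the form $rZ$ and, by hypothesis, no $A$ is of the form $qQ$. Then for $r\in R$, $x\in E$ with $rx=0$ and $A=(0:x)$, one has $rQ\subsetneq A$, hence $(0:A)\subsetneq(0:r)$; choosing $a\in(0:r)\setminus(0:A)$ gives $(0:a)\subseteq(0:(0:A))=A$ by the Klatt--Levy double-annihilator identity \cite{KlLe69}, and injectivity of $E$ yields $y$ with $x=ay\in(0:r)E$. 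This verifies the flatness criterion without ever needing to describe $\mathcal{A}(E)$ completely. Your derivation of $(0:x)=aR$ and of the non-principality of $(0:a)$ is correct and essentially recovers the obstruction, but the contrapositive route you chose demands a full identification of $\mathcal{A}(E)$ that you have not supplied.
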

\begin{proof} If $\mathcal{A}(E)=\{rZ\mid r\in R\}$ then $E$ is flat by \cite[Proposition 8]{Couch03}. So, we may assume that $A$ is not of the form $rZ$ if $A\in\mathcal{A}(E)$. By \cite[Lemma 26]{Couch03} $A^{\sharp}=E_{\sharp}$ for each $A\in\mathcal{A}(E)$, so $A$ is an ideal of $Q$. It is easy to check that $(0:I)$ is also an ideal of $Q$ for each ideal $I$ of $R$. In the sequel we apply \cite[Proposition 1.3]{KlLe69} to $Q$: $(0:(0:A))\ne A$ if and only if $A=qZ$ and $(0:(0:A))=qQ$ for some $q\in Z$.
Let $r\in R$ and $x\in E$ such that $rx=0$. Then $r\in A$ where $A=(0:x)$. Since $rQ\subset A$, then $(0:A)\subset (0:r)$. Let $a\in (0:r)\setminus (0:A)$. It follows that $(0:a)\subseteq (0:(0:A))=A$. The injectivity of $E$ implies that there exists $y\in E$ such that $x=ay$. So, $E$ is flat.
\end{proof}

\begin{proposition}
Let $R$ be a chain ring. Assume that $d_R(L)<\infty$ and $R_L$ is almost maximal for a non-zero prime ideal $L$, and that $E(R/Z)$ contains a pure uniserial submodule $U$. Then $\mathrm{Mr}\ E=\mathrm{Fr}\ E$ for each indecomposable injective module $E$.
\end{proposition}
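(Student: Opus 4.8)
The plan is to prove the nontrivial inequality $\mathrm{Fr}\ E\le\mathrm{Mr}\ E$ for each indecomposable injective $E$; the reverse inequality is free, and by Remark~\ref{R:Malcev} it suffices to exhibit a \emph{flat} module $T$ with $\mathrm{Mr}\ T\le\mathrm{Mr}\ E$ admitting an epimorphism $T\twoheadrightarrow E$. Note first that the two running hypotheses are exactly condition $(2)$ of Theorem~\ref{T:main}, so conditions $(1)$--$(7)$ hold, and, writing $J=E_\sharp$, the last assertion of Theorem~\ref{T:main} tells us that $\mathrm{Mr}\ E=1$ when $J\subseteq L$ and $\mathrm{Mr}\ E=d_{R_J}(L_J)=\mathrm{Mr}_{\widetilde{R_J}}\widehat{R_J}$ when $L\subset J$.

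First I would dispose of the case $E_\sharp\subseteq Z$. By Proposition~\ref{P:IF}, $E$ is flat unless $\mathcal A(E)=\{qQ\mid 0\ne q\in Z\}$; when $E$ is flat one simply takes $T=E$, since $E$ is an epimorphic image of itself. In the remaining case $E$ is, after passing to $Q$, an indecomposable injective $Q$-module whose annihilator ideals are precisely the proper principal ideals of $Q$; one checks that $E$ is then uniserial and an epimorphic image of $Q$, which is a localization of $R$, hence a flat $R$-module with $\mathrm{Mr}\ Q=1$ (every finitely generated $R$-submodule of $Q$ is cyclic). So $\mathrm{Fr}\ E\le 1=\mathrm{Mr}\ E$; this is the point at which one uses that, under the hypotheses, the relevant localization is almost maximal (Gill's theorem, via \cite[Theorem]{Gil71}).

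The main case is $Z\subset E_\sharp=:J$. By Lemma~\ref{L:pureunis} applied together with the hypothesis on $E(R/Z)$, the module $E$ contains a pure uniserial submodule $U$, and $E=E(U)$. Running the argument of the implication $(3)\Rightarrow(8)$ of Theorem~\ref{T:main}, one gets $E\cong F\otimes_R U$, where $F=\widehat{R_J}$ if $R$ is not a domain, and $F$ is a pure reduced torsion-free $R_J$-submodule of finite rank of $\widehat{R_J}$ with $\widehat{R_J}/F$ divisible if $R$ is a domain. In either case $F$ is flat: in the domain case because it is torsion-free over the valuation domain $R_J$, and in the non-domain case because, by Theorem~\ref{T:InjHull} applied to $R_J$, the module $\widehat{R_J}$ has a finite pure-composition series whose successive factors are free modules over localizations of $R_J$, so it is an extension of finitely many flat modules. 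Next, write $U=\bigcup_i Ru_i$ as an increasing union of cyclic submodules, with $u_i=r_iu_{i+1}$; then $U=\varinjlim_i R_J/(0:u_i)$ along the injective maps $R_J/(0:u_i)\xrightarrow{r_i}R_J/(0:u_{i+1})$. Put $B=\varinjlim_i\bigl(R_J\xrightarrow{r_1}R_J\xrightarrow{r_2}\cdots\bigr)$. Then $B$ is flat (a filtered colimit of free modules of rank one), every finitely generated $R$-submodule of $B$ lies in the image of one of the copies of $R_J$ and is therefore cyclic, so $\mathrm{Mr}\ B\le 1$; and the quotient maps $R_J\twoheadrightarrow R_J/(0:u_i)$ induce an epimorphism $B\twoheadrightarrow U$. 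Consequently $T:=F\otimes_{R_J}B$ is flat over $R_J$, hence over $R$ (as $R_J$ is $R$-flat), and $T\to F\otimes_{R_J}U\cong E$ is onto. Since every finitely generated $R$-submodule of $T$ lies in the image of one of the copies of $F$ in the colimit, $\mathrm{Mr}_R\ T\le\mathrm{Mr}_R\ F$; in the domain case $\mathrm{Mr}_R\ F=\mathrm{rank}\ F=\mathrm{Mr}\ E$ and we are done.

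The step I expect to be the real obstacle is the Malcev-rank bookkeeping in the non-domain case: one must verify $\mathrm{Mr}_R\widehat{R_J}=\mathrm{Mr}\ E\,(=\mathrm{Mr}_{\widetilde{R_J}}\widehat{R_J}$, by the Remark following Proposition~\ref{P:polys} and Theorems~\ref{T:InjHull}--\ref{T:main}$)$, i.e.\ that finitely generated $R$-submodules of $\widehat{R_J}$ cannot have strictly more generators than their images in $E$. Since $E=\varinjlim_{A\in\mathcal A(E)}\widehat{R_J}/A\widehat{R_J}$ and $E$ is faithful over $R_J$ (because $\mathfrak m_{R_J}\ne Z(R_J)$ here), the content of this is that each finitely generated $X\subseteq\widehat{R_J}$ is carried without loss of generators into some $\widehat{R_J}/A\widehat{R_J}$, which one controls through the free-over-localization factors of the pure-composition series of $\widehat{R_J}$ (reducing, if convenient, to the situation $E_\sharp=P$ after replacing $R$ by $R_{E_\sharp}$ and keeping track of the behaviour of $\mathrm{Fr}$ under this localization). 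The treatment of the exceptional flat case $\mathcal A(E)=\{qQ\mid 0\ne q\in Z\}$ in the second paragraph, where almost maximality of $Q$ is invoked, is a secondary technical point.
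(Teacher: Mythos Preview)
Your overall strategy---split on $J=E_\sharp$ relative to $Z$, use Proposition~\ref{P:IF} for $J\subseteq Z$, and tensor a flat cover of a uniserial submodule with (a piece of) $\widehat{R_J}$ for $Z\subset J$---matches the paper's. But the exceptional subcase $\mathcal A(E)=\{qQ\mid 0\ne q\in Z\}$ is not handled correctly, and the main case is made harder than necessary.

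\textbf{The gap.} In the exceptional subcase you assert that $E$ is uniserial and that $\mathrm{Mr}\ E=1$, invoking Gill's theorem for ``the relevant localization''. That localization is $Q=R_Z$, and nothing in the hypotheses forces $Q$ to be almost maximal: you are given $R_L$ almost maximal, and $L$ need not contain $Z$. When $L\subset Z$ one has $\mathrm{Mr}\ E=d_{R_Z}(L_Z)>1$ by Theorem~\ref{T:main}, so $E$ is not uniserial and your argument collapses. The paper avoids this entirely: by \cite[Proposition~14]{Couch03} there is an epimorphism $E(Q/Z)\twoheadrightarrow E$ with simple kernel, and $E(Q/Z)$ is flat (its annihilator ideals are of the form $rZ$, not $qQ$, so Proposition~\ref{P:IF}---or rather the first line of its proof---applies). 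One then checks $\mathrm{Mr}\ E(Q/Z)=\mathrm{Mr}\ E$, which gives $\mathrm{Fr}\ E\le\mathrm{Mr}\ E$ directly, with no appeal to almost maximality of $Q$.

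\textbf{The detour in the main case.} For $Z\subset J$ you pass via Lemma~\ref{L:pureunis} to a pure uniserial $U'\subseteq E$ and then manufacture a flat colimit $B$ covering $U'$. (Technically, Lemma~\ref{L:pureunis} has as hypothesis a pure uniserial submodule in some $E$ with $Z\subset E_\sharp$, whereas you are given one in $E(R/Z)$ with $E(R/Z)_\sharp=Z$; what you really want is the last two sentences of that lemma's \emph{proof}.) This works, but the paper's route is shorter: the $U$ given in the hypothesis already sits purely inside the flat module $E(R/Z)$, hence is itself flat, and $E\cong\widehat{R_J}\otimes_R(U/Ax)$ is a quotient of the flat module $\widehat{R_J}\otimes_R U$; in the domain case one replaces $\widehat{R_J}$ by the finite-rank pure submodule $F$ and may take $U=Q$. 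The Malcev-rank bookkeeping you worry about then reduces to $\mathrm{Mr}(\widehat{R_J}\otimes_R U)\le\mathrm{Mr}\,\widehat{R_J}$ (every finitely generated piece lands in some $\widehat{R_J}\otimes_R Ru$) together with $\mathrm{Mr}\ E=\mathrm{Mr}_R\widehat{R_J}$ from Theorem~\ref{T:main} and the remark after Proposition~\ref{P:polys}; no colimit construction is needed.
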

\begin{proof}
Let $E$ be an indecomposable injective module and $J=E_{\sharp}$. Since $\mathrm{Mr}\ E\leq\mathrm{Fr}\ E$ it is enough to show that $E$ is an epimorphic image of a flat module $G$ with $\mathrm{Mr}\ E=\mathrm{Mr}\ G$. First we assume that $J\subseteq Z$. If $Q$ is coherent then $E$ is flat. If $Q$ is not coherent and if $E\ncong E(Q/qQ)$, where $0\ne q\in Z$, then  $E$ is flat by Proposition~\ref{P:IF}. If $E= E(Q/qQ)$, then by \cite[Proposition 14]{Couch03} there exits an epimorphism $E(Q/Z)\rightarrow E$ whose kernel is a simple $Q$-module. It is easy to check that $\mathrm{Mr}\ E=\mathrm{Mr}\ E(Q/Z)$, and  $E(Q/Z)$ is flat. Now, we assume that $Z\subset J$. In this case, $E\cong\widehat{R_J}\otimes_R(U/Ax)$ where $A$ is a faithful annihilator ideal of $E$ and $x\in U$ with $Z=(0:x)$. Moreover, $U$ is flat because so is $E(R/Z)$. Hence $E$ is an epimorphic image of $\widehat{R_J}\otimes_RU$ which is flat. If $R$ is not a domain then $\mathrm{Mr}\ E=\mathrm{Mr}\ \widehat{R_J}\otimes_RU=\mathrm{Mr}\ \widehat{R_J}$ by Theorem~\ref{T:main}(b). If $R$ is a domain, by Theorem~\ref{T:main} $\widehat{R_J}$ contains a pure submodule $F$ of  rank equal to $\mathrm{Mr}_{\widetilde{R}}\ \widehat{R_J}$ such that $\widehat{R_J}/F$ is a divisible module. In this case we take $U=Q$ and $x=1$. Since $Q/A$ is a torsion module we have $E\cong F\otimes_RQ/A$. So, $E$ is a homomorphic image of $F\otimes_RQ$ and $\mathrm{Mr}\ E=\mathrm{Mr}_{\widetilde{R}}\ \widehat{R_J}=\mathrm{Mr}\ F\otimes_RQ=\mathrm{d}_{R_J}(L)$ by Theorem~\ref{T:main}(b).
\end{proof}

We say that a submodule $K$ of a module $M$ is \textbf{superfluous} if the equality $K+G=M$ holds only when $G=M$. A module $M$ is \textbf{co-uniform} if each of its proper submodules  is superfluous. We say that $M$ has \textbf{dual Goldie dimension} $n$ (or $\mathrm{dG}\ M=n$) if there exists an epimorphism $\phi$ from $M$ into a direct sum of $n$ co-uniform modules such that $\ker\ \phi$ is superfluous.

\begin{proposition}
\label{P:MrdG} Let $R$ be a chain ring. Then $\mathrm{dG}\ M\leq\mathrm{Mr}\ M$ for each $R$-module $M$.
\end{proposition}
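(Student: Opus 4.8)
The plan is to reduce the statement to the following elementary assertion: \emph{if $M$ admits an epimorphism onto a direct sum of $n$ nonzero modules, then $\mathrm{Mr}\ M\geq n$.} This suffices, because when $\mathrm{dG}\ M=n$ the defining epimorphism $\phi\colon M\to C_1\oplus\dots\oplus C_n$ (with each $C_i$ co-uniform, hence nonzero) is of this form, and composing $\phi$ with a coordinate projection handles the general case $\mathrm{dG}\ M\geq n$. Notice that neither the co-uniformity of the $C_i$ nor the superfluousness of $\ker\phi$ will be used; in fact the argument works over any local ring, not just a chain ring.

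The one non-formal ingredient is a standard fact about the local ring $R$ with maximal ideal $P$: if $X=Rx_1+\dots+Rx_n$ and $x_k\notin\sum_{j\neq k}Rx_j$ for every $k$, then $\mathrm{gen}\ X=n$. Indeed, by Nakayama's lemma $\mathrm{gen}\ X=\dim_{R/P}(X/PX)$, so if $\mathrm{gen}\ X<n$ the images of $x_1,\dots,x_n$ would be linearly dependent in the $(R/P)$-vector space $X/PX$; lifting a nontrivial relation gives $\sum_i r_ix_i\in PX=\sum_iPx_i$ with some $r_k\notin P$, and absorbing the $k$-th term of the right-hand side (which lies in $Px_k$) yields $ux_k\in\sum_{j\neq k}Rx_j$ for a unit $u$, i.e.\ $x_k\in\sum_{j\neq k}Rx_j$, a contradiction. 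Granting this, I would argue as follows: given an epimorphism $\phi\colon M\to C_1\oplus\dots\oplus C_n$ with all $C_i\neq 0$, choose for each $i$ an element $x_i\in M$ with $\phi(x_i)$ a nonzero element of the $i$-th summand $C_i$ (possible since $\phi$ is onto and $C_i\neq 0$), and set $X=Rx_1+\dots+Rx_n$. If $x_k\in\sum_{j\neq k}Rx_j$ for some $k$, applying $\phi$ gives $\phi(x_k)\in\sum_{j\neq k}R\,\phi(x_j)\subseteq\bigoplus_{j\neq k}C_j$, which contradicts $0\neq\phi(x_k)\in C_k$ together with $C_k\cap\bigoplus_{j\neq k}C_j=0$. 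Hence $\mathrm{gen}\ X=n$ by the fact above, so $\mathrm{Mr}\ M\geq n$.

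Letting $n$ range over all positive integers not exceeding $\mathrm{dG}\ M$ gives $\mathrm{dG}\ M\leq\mathrm{Mr}\ M$; if $\mathrm{dG}\ M=\infty$ (i.e.\ such an epimorphism onto $n$ co-uniform modules exists for every $n$), the same computation shows $\mathrm{Mr}\ M$ is unbounded, hence infinite, and the inequality still holds. I do not expect a genuine obstacle: the only point requiring care is the local-ring lemma on minimal numbers of generators (and the routine bookkeeping for the value $\infty$), while everything else is a one-line diagram chase using the internal direct sum decomposition of the codomain of $\phi$.
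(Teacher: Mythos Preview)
Your proof is correct and follows essentially the same approach as the paper's: both pick preimages of nonzero elements in each direct summand and show that the resulting finitely generated submodule of $M$ has $\mathrm{gen}$ equal to $n$. The paper presents this last step slightly more directly by observing that any relation $\sum a_i y_i=0$ forces all $a_i\in P$ (since $\sum a_i\phi(y_i)=0$ gives $a_i\phi(y_i)=0$ for each $i$), whereas you phrase it via the ``no generator lies in the span of the others'' criterion together with Nakayama; these are equivalent formulations of the same minimal-generating-set argument over a local ring.
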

\begin{proof}
Let $n$ a positive integer such that $n\leq\mathrm{dG}\ M$. Then there exists an epimorphism $\phi:M\rightarrow \oplus_{i=1}^n M_i$ where $M_i$ is a non-zero $R$-module for $i=1,\dots,n$. For each $i$, $1\leq i\leq n$, let $x_i$ be a non-zero element of $M_i$ and let $y_i\in M$ such that $x_i=\phi(y_i)$. If $\Sigma_{i=1}^na_iy_i=0$ where $a_i\in R$ for $i=1,\dots,n$, we successively deduce that $\Sigma_{i=1}^na_ix_i=0$, $a_ix_i=0$ and $a_i\in P$ for $i=1,\dots,n$. It follows that $\mathrm{Mr}\ M\geq n$ for each integer $n\leq\mathrm{dG}\ M$. So, $\mathrm{dG}\ M\leq\mathrm{Mr}\ M$.
\end{proof}

\begin{proposition}
\label{P:dG} Let $R$ be a chain ring. Suppose there exists a non-zero prime ideal $L$ such that $Z\subseteq L$, $d_R(L)<\infty$ and $R_L$ is almost maximal. Then  each indecomposable injective module $E$ is polyserial and $\mathrm{Mr}\ E=\mathrm{dG}\ E$.
\end{proposition}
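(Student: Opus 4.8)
The plan is to establish the two conclusions in turn. For the first, note that the hypotheses on $L$ are exactly condition $(2)$ of Theorem~\ref{T:main}, so that conditions $(1)$--$(7)$ hold; by the final clause of that theorem it then suffices to check that every indecomposable injective module contains a pure uniserial submodule, and this is where $Z\subseteq L$ is needed. If $P=Z$ then necessarily $L=Z=P$, so $R=R_L$ is almost maximal and every indecomposable injective module is uniserial by \cite{Gil71}; so I may assume $Z\subsetneq P$. Since $Z$ is prime, $R/Z$ is a valuation domain, the image $L/Z$ is a non-zero prime, the ring $(R/Z)_{L/Z}\cong R_L/ZR_L$ is a homomorphic image of the almost maximal chain ring $R_L$ and hence is almost maximal, and $d_{R/Z}(L/Z)\le d_R(L)<\infty$. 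Thus $R/Z$ also satisfies condition $(2)$ of Theorem~\ref{T:main}, and being a valuation domain it satisfies the pure-uniserial-submodule hypothesis too, whence $E(R/Z)$ is polyserial. From a pure uniserial submodule of $E(R/Z)$ --- obtained exactly as in the proof of Lemma~\ref{L:pureunis}, treating separately the cases $Q$ coherent and $Q$ non-coherent (using Proposition~\ref{P:IF} in the latter) --- Lemma~\ref{L:pureunis} supplies a pure uniserial submodule in every indecomposable injective $G$ with $Z\subset G_{\sharp}$; in particular some such $E$ with $E_{\sharp}=P$ is polyserial, and Proposition~\ref{P:polys} then disposes of the remaining indecomposable injective modules, namely those with $E_{\sharp}\subseteq Z$. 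Hence every indecomposable injective module is polyserial.

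For the equality $\mathrm{Mr}\ E=\mathrm{dG}\ E$, fix an indecomposable injective module $E$ and put $n=\mathrm{Mr}\ E$, which is finite by Theorem~\ref{T:main}. Proposition~\ref{P:MrdG} gives $\mathrm{dG}\ E\le n$, so the task is to build an epimorphism of $E$ onto a direct sum of $n$ non-zero co-uniform modules. Write $J=E_{\sharp}$; if $J\subseteq L$ then $E$ is uniserial, hence co-uniform, and $n=1=\mathrm{dG}\ E$. Otherwise $L\subsetneq J$, and since $ZR_J\subseteq LR_J$ the localization $R_J$ satisfies the same hypotheses, so replacing $R$ by $R_J$ I may assume $J=E_{\sharp}=P$. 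By the constructions underlying Theorems~\ref{T:main} and \ref{T:InjHull}, $E\cong\widehat R\otimes_R U$ for a pure uniserial submodule $U$ of $E$ when $R$ is not a domain, while $E\cong F\otimes_R U$ with $F$ a pure reduced torsion-free submodule of $\widehat R$ of rank $n$ when $R$ is a domain; in either case $\widehat R$ (respectively $F$) admits a pure-composition series whose bottom factor is isomorphic to $R$ and whose $n-1$ remaining factors, after refinement, are free modules over localizations $R_{L_j}$ (respectively torsion-free modules of rank one over the valuation domain $R$). Since a pure submodule that is pure-projective is a direct summand, each such free factor splits off from the corresponding quotient, so $\widehat R/R$ (respectively $F/F_1$) decomposes as a finite direct sum of modules that are uniserial, hence co-uniform, as $R$-modules; applying $\otimes_R U$ transports this to a direct-sum decomposition of $E/U$ into $n-1$ co-uniform modules. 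Finally the bottom uniserial factor is peeled off by a separate argument, using the identity $\widehat R=R+P\widehat R$ of \cite[Proposition 1]{Couc06} and purity of $R$ in $\widehat R$ (so that $\widehat R/P\widehat R\cong R/P$), to produce one further co-uniform quotient $C$ of $E$ together with a surjection of $E$ onto $(E/U)\oplus C$. Altogether $\mathrm{dG}\ E\ge n$, and therefore $\mathrm{Mr}\ E=\mathrm{dG}\ E$.

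The first conclusion is essentially bookkeeping once one observes that $Z\subseteq L$ makes the pure-uniserial-submodule condition descend to the valuation domain $R/Z$. The substance lies in the second conclusion, and in particular in the inequality $\mathrm{dG}\ E\ge\mathrm{Mr}\ E$. The main obstacle there is to convert the explicit polyserial structure of $\widehat R$ (equivalently $\widehat{R_J}$) into enough \emph{independent} quotients of $E$: the higher pure-composition factors are free over the rings $R_{L_j}$ precisely because of the maximality and almost maximality furnished by the hypothesis, which is what allows them to split off, and one must then control the bottom factor and verify that the map constructed is genuinely onto the direct sum rather than a proper submodule of it. Keeping track of the different behaviours of the uniserial factors --- divisible versus finitely generated, according to whether $R$ is a domain and to the nature of $U$ --- is the principal source of case analysis, and is the step I expect to require the most care.
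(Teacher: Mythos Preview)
Your route to polyseriality is more involved than necessary. The paper simply notes that $E(R/Z)$ is an $R_Z$-module and that $R_Z$ is almost maximal (since $Z\subseteq L$, the total quotient ring of the almost maximal ring $R_L$ is $R_Z$, and this is maximal), so $E(R/Z)$ is already uniserial. The argument of Lemma~\ref{L:pureunis} then produces a pure uniserial submodule in every indecomposable injective $E$ with $Z\subset E_\sharp$, and Theorem~\ref{T:main} yields polyseriality. Your detour through the domain $R/Z$ also leaves the subcase $Z=L\subsetneq P$ unhandled, since then $L/Z=0$ and your appeal to condition~(2) of Theorem~\ref{T:main} for $R/Z$ collapses.

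The genuine gap is in the inequality $\mathrm{dG}\ E\ge\mathrm{Mr}\ E$. The splitting principle you cite is false: a pure submodule that is pure-projective need not be a direct summand (e.g.\ $\mathbb{Z}$ is free and pure in the $p$-adic integers, yet does not split off). The correct criteria are that a pure-exact sequence splits when the \emph{quotient} is pure-projective or when the submodule is pure-\emph{injective}. Neither applies here: the bottom factor $F_2/F_1$ of $\widehat R/R$ is free over $R_{L_1}$, and by Theorem~\ref{T:InjHull} only $R_{L_m}$ is guaranteed maximal; the intermediate localizations $R_{L_j}$ are in general neither maximal (hence not pure-injective over $R$) nor finitely presented over $R$ (hence not pure-projective). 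So $\widehat R/R$, and with it $E/U$, has no reason to decompose as a direct sum of uniserials over $R$, and your construction of $n$ independent co-uniform quotients breaks down.

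The paper's device is to \emph{localize at $L$} before decomposing. Each uniserial factor of a pure-composition series of $\widehat{R_J}$ is isomorphic to $R_{L'}$ with $L'\supseteq L$, so localization at $L$ turns every factor of $E$ into $U_L$, and one obtains $E_L\cong U_L^{\,d}$ with $d=d_{R_J}(L_J)=\mathrm{Mr}\ E$. The hypothesis $Z\subseteq L$ now does real work: it makes every element of $R\setminus L$ a non-zerodivisor, the injective module $E$ is divisible by such elements, and hence the natural map $E\to E_L$ is surjective. Thus $E_L$, a direct sum of $d$ co-uniform modules, is a quotient of $E$, giving $\mathrm{dG}\ E\ge d$. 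This localization step is the missing idea; your attempt to split over $R$ itself cannot succeed precisely because $R$ is not assumed almost maximal.
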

\begin{proof}
Let $H$ be the injective hull of $R/Z$. Then, since $H$ is an $R_Z$-module and $R_Z$ is almost maximal, $H$ is uniserial. Let $E$ be an indecomposable injective module and let $J=E_{\sharp}$. If $J\subseteq Z$ then $E$ is uniserial. If $Z\subset J$ we do as in the proof of Lemma~\ref{L:pureunis} to show that $E$ contains a pure uniserial submodule $U$. By Theorem~\ref{T:main} $E$ is a polyserial module.
If $V$ is a uniserial factor of a pure composition series of $\widehat{R_J}$, then by Theorem~\ref{T:InjHull} $V\cong R_{L'}$ for some prime ideal $L'\supseteq L$. Il follows that $E_L\cong U_L^d$ where $d=d_{R_J}(L_J)=\mathrm{Mr}\ E$. Since $Z\subseteq L$, $E_L$ is a homomorphic image of $E$. So, $\mathrm{dG}\ E\geq d$. By Proposition~\ref{P:MrdG} $\mathrm{dG}\ E=d$.
\end{proof}

We say that a chain ring is \textbf{strongly discrete} if $L^2\ne L$ for each non-zero prime ideal $L$.
\begin{proposition}
\label{P:discrete} Let $R$ be a  chain ring such that $Q$ is strongly discrete. Then each indecomposable injective module contains a pure uniserial submodule. For such a ring the eleven conditions of Theorem~\ref{T:main} are equivalent.
\end{proposition}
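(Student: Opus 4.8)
The plan is to reduce the whole statement to the single fact that \emph{a chain ring $Q$ which is strongly discrete is coherent}, and then to quote the material already exploited in the proof of Lemma~\ref{L:pureunis}. First I would record the elementary reduction: a localisation of a strongly discrete chain ring is again strongly discrete, so, given an indecomposable injective module $E$, we may replace $R$ by $R_{E_\sharp}$ and assume $E_\sharp=P$, this leaving the ring $Q=R_Z$ (hence the hypothesis) unchanged whenever $Z\subseteq E_\sharp$.

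The central step is the claim that $Q$ is coherent. If $Q$ is a field this is clear. Otherwise $ZQ$, the maximal ideal of $Q$, is a non-zero prime ideal, so by strong discreteness $(ZQ)^2\ne ZQ$; since in a chain ring a maximal ideal that is not idempotent is principal, write $ZQ=aQ$. Because $Q=R_Z$ is its own total ring of quotients, the non-unit $a$ is a zero-divisor, so $(0:_Qa)\ne 0$ and $aQ\cong Q/(0:_Qa)$; as a cyclic flat module over a local ring is free, $aQ$ is not flat. Hence $Z(Q)=ZQ$ is not a flat $Q$-module, and the contrapositive of \cite[Theorem 10]{Couch03}, applied to the chain ring $Q$, shows that $Q$ is coherent.

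Now that $Q$ is coherent, \cite[Corollary 22]{Couch03} provides a pure uniserial submodule in $E(R/L)$ and in $E(R_L/aR_L)$ for every prime $L\subseteq Z$; since every indecomposable injective module $E$ with $E_\sharp\subseteq Z$ is of one of these two forms, this settles that case. If $Z\subset E_\sharp=P$, then $E(R/Z)$ contains a pure uniserial submodule by the previous sentence (take $L=Z$), and $E$ is faithful by \cite[Corollary 28]{Couch03} (the alternative, that $E$ be annihilated by a simple ideal, would force $P=Z$); so $\mathcal{A}(E)$ contains a faithful ideal, and extending the pure uniserial submodule of $E(R/Z)$ across that ideal by means of \cite[Proposition 6]{Couch03}, exactly as in the final part of the proof of Lemma~\ref{L:pureunis}, yields a pure uniserial submodule of $E$. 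Thus every indecomposable injective $R$-module contains a pure uniserial submodule, and by Theorem~\ref{T:main} the nine conditions listed there are equivalent.

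The main obstacle is the middle paragraph: one has to recognise that strong discreteness is precisely the hypothesis making $ZQ$ principal with a zero-divisor as generator, so that $Z(Q)$ fails to be flat and the coherence criterion of \cite{Couch03} becomes applicable; everything else is bookkeeping, the only points needing care being the reduction to $E_\sharp=P$ (harmless, as it does not change $Q$) and the observation that the case $Z=P$, that is $Q=R$, is already absorbed into the discussion of the modules with $E_\sharp\subseteq Z$.
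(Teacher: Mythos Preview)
Your overall strategy—deduce from strong discreteness a ``good'' property of $Q$ (coherence, equivalently $(0:Z)\ne 0$) and then feed this into \cite[Corollary 22]{Couch03}—is the same as the paper's. The execution, however, has a genuine gap in the case $E_\sharp\subseteq Z$. You assert that ``every indecomposable injective module $E$ with $E_\sharp\subseteq Z$ is of one of these two forms'' (namely $E(R/L)$ or $E(R_L/aR_L)$), and this is precisely the nontrivial point; you give no argument for it. Over a general chain ring there is no reason for an indecomposable injective with a prescribed bottom prime $J$ to be of such a special shape. The claim does turn out to be correct here, but the reason is exactly the step the paper supplies and you omit: since $J^2\ne J$, the ideal $JR_J$ is principal, and because every $A\in\mathcal A(E)$ satisfies $A^\sharp=J$ (\cite[Lemma 26]{Couch03}), each such $A$ is itself principal over $R_J$, forcing $E\cong E(R_J/bR_J)$ for some $b$. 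Without this principality argument your reduction to the modules appearing in Proposition~\ref{P:polys}(2) is unsupported.

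Two smaller remarks. First, your proof that $Q$ is coherent via ``$ZQ$ is not flat, hence $Q$ is coherent by \cite[Theorem 10]{Couch03}'' is correct but more elaborate than necessary: the paper simply observes that $Z=sQ$ with $s$ a zero-divisor gives $(0:Z)=(0:s)\ne 0$, which is the input \cite[Corollary 22]{Couch03} actually uses. Second, for the case $Z\subset E_\sharp$ the paper applies \cite[Corollary 22]{Couch03} directly (the condition $(0:Z)\ne 0$ already established suffices), whereas you take the longer route through $E(R/Z)$ and the end of Lemma~\ref{L:pureunis}; your argument there is fine, just less direct.
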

\begin{proof}
Let $E$ be an indecomposable injective module and let $J=E_{\sharp}$. Then $E$ is an $R_J$-module. If $A\in\mathcal{A}(E)$ then $A^{\sharp}=J$ by \cite[Lemma 26]{Couch03}. If $J\subseteq Z$,  since $J\ne J^2$ then $JR_J=cR_J$ for some $c\in J\setminus J^2$, and since $AR_J\ne cAR_J$ then $AR_J=aR_J$ for some $a\in AR_J\setminus cAR_J$. On the other hand, since $Z=sQ$ for some $s\in Z\setminus Z^2$, $(0:Z)=(0:s)\ne 0$. So, \cite[Corollary 22]{Couch03} can be applied to show that $E$ contains a pure uniserial submodule.
\end{proof}

\begin{corollary}
\label{C:main} Let $R$ be a valuation domain. Then the eleven conditions of Theorem~\ref{T:main} are equivalent. Moreover, for each indecomposable injective module $E$, $\mathrm{Mr}\ E=\mathrm{Fr}\ E=\mathrm{dG}\ E=\mathrm{Mr}_{\widetilde{R_J}}\ \widehat{R_J}$ where $J=E_{\sharp}$.
\end{corollary}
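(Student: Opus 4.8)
\textbf{Proof proposal for Corollary~\ref{C:main}.}

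The plan is to verify that a valuation domain $R$ satisfies the two hypotheses under which Theorem~\ref{T:main} gives the full equivalence of all nine conditions, namely that each indecomposable injective module contains a pure uniserial submodule, and then to assemble the rank equalities from the propositions already proved in Section~\ref{S:FldG}. First I would observe that over a valuation domain $R$ every torsion-free module is flat, and more importantly that for any non-zero prime ideal $J$ the localization $R_J$ is again a valuation domain; hence $Q=R_Z=R_0=K$ is the quotient field of $R$. Since $Z=0$ here, for an indecomposable injective module $E$ with $J=E_{\sharp}$ we always have $Z=0\subseteq J$, and the standard fact (e.g.\ \cite[Corollary 22]{Couch03}, invoked exactly as in the proof of Lemma~\ref{L:pureunis}) that over a valuation domain $R_J$ each indecomposable injective module contains a pure uniserial submodule applies verbatim: indeed $E$ is a module over the valuation domain $R_J$ and $Q/R_J\cong Q/aR_J$ embeds purely, or one simply uses that a uniserial torsion-free $R_J$-module of rank one sits purely inside $E$. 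Thus the hypothesis "each indecomposable injective module contains a pure uniserial submodule" of the last sentence of Theorem~\ref{T:main} is automatic, so the nine conditions of Theorem~\ref{T:main} are equivalent for $R$.

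Next I would pin down the rank invariants. Fix an indecomposable injective $E$ and put $J=E_{\sharp}$. Theorem~\ref{T:main}, second bullet, applied with the almost maximal prime ideal $L$ furnished by condition $(2)$, gives $\mathrm{Mr}\ E=\mathrm{Mr}_{\widetilde{R_J}}\widehat{R_J}$ (the case $J\subseteq L$ collapses to $\mathrm{Mr}\ E=1$, which also equals $\mathrm{Mr}_{\widetilde{R_J}}\widehat{R_J}$ since then $R_J$ is almost maximal and $\widehat{R_J}=\widetilde{R_J}$ has rank one over itself; alternatively one takes $L$ small enough that $L\subset J$). For the Fleischer rank, the Proposition preceding Proposition~\ref{P:MrdG} in Section~\ref{S:FldG} applies: its hypotheses are $d_R(L)<\infty$, $R_L$ almost maximal — both holding by the now-established equivalent conditions — and "$E(R/Z)$ contains a pure uniserial submodule", which for $Z=0$ means $E(R/0)=E(R)=Q$ contains a pure uniserial submodule, trivially true since $Q$ itself is uniserial and pure in itself. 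Hence $\mathrm{Mr}\ E=\mathrm{Fr}\ E$. For the dual Goldie dimension, Proposition~\ref{P:dG} requires a non-zero prime $L$ with $Z\subseteq L$, $d_R(L)<\infty$, $R_L$ almost maximal; since $Z=0$ this is again exactly condition $(2)$, so $\mathrm{Mr}\ E=\mathrm{dG}\ E$. Chaining these together yields $\mathrm{Mr}\ E=\mathrm{Fr}\ E=\mathrm{dG}\ E=\mathrm{Mr}_{\widetilde{R_J}}\widehat{R_J}$.

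The only genuinely delicate point is the bookkeeping around the prime ideal $L$: Theorem~\ref{T:main} and the two Propositions in Section~\ref{S:FldG} are each stated with their own hypothesis on $L$, and I must make sure a single choice of $L$ (or, if convenient, possibly different choices, since the conclusions are numerical and independent of which valid $L$ is used) simultaneously satisfies $L\neq 0$, $Z\subseteq L$, $d_R(L)<\infty$, and $R_L$ almost maximal. Because $Z=0$ in a domain, the condition $Z\subseteq L$ is vacuous, so any prime $L$ produced by condition $(2)$ of Theorem~\ref{T:main} works in all three statements; the maximal such $L$ (as in the proof of $(1)\Rightarrow(2)$ there) is the clean choice. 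The remaining care is only in the degenerate case $J\subseteq L$, where one checks directly that all four quantities equal $1$, but this is immediate because then $R_J$ is a localization of $R_L$ at a larger prime, hence almost maximal, forcing $E$ uniserial and $\widehat{R_J}=\widetilde{R_J}$ of rank one. No new estimates are needed; the corollary is a packaging of Theorem~\ref{T:main}, Proposition~\ref{P:dG} and the Fleischer-rank proposition, with Proposition~\ref{P:discrete}'s argument (specialized to $Z=0$) supplying the pure-uniserial-submodule hypothesis.
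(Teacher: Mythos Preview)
Your proposal is correct and matches the paper's intended argument, which is left implicit (the corollary is stated without proof immediately after Proposition~\ref{P:discrete}). The only streamlining worth noting is that the pure-uniserial-submodule hypothesis follows in one line from Proposition~\ref{P:discrete} itself: for a valuation domain $R$ one has $Z=0$, so $Q=R_Z$ is the quotient field, which is vacuously strongly discrete (it has no non-zero prime ideal), and Proposition~\ref{P:discrete} then gives both the hypothesis and the equivalence of the nine conditions directly---there is no need to unwind \cite[Corollary 22]{Couch03} or Lemma~\ref{L:pureunis} by hand. Your handling of the rank equalities via the Fleischer-rank proposition, Proposition~\ref{P:dG}, and the second bullet of Theorem~\ref{T:main} is exactly what the paper has set up, and your treatment of the boundary case $J\subseteq L$ (and of the requirement $L\ne 0$ in Proposition~\ref{P:dG}) is appropriate.
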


\begin{example}
It is possible to build examples of chain rings satisfying the eleven equivalent conditions of Theorem~\ref{T:main} by using \cite[Example 6 and Theorem 8]{FaZa86}. These examples are strongly discrete (and Henselian). If $R$ is  such  an example then $\mathrm{Mr}_{\widetilde{R}}\ \widehat{R}=p^m$, where $p$ is a prime number and $m$ a non-negative integer. By \cite[Remark p.16]{Vam90} $\mathrm{Mr}_{\widetilde{R}}\ \widehat{R}$ is always a prime power.
\end{example}

\bigskip

\section{Indecomposable injective modules over local Noetherian rings of Krull dimension one}
\label{S:Noe}

From a result by Marie-Paule Malliavin  we deduce Theorem~\ref{T:Noe}. If $M$ is a module of finite length, we denote by $\ell(M)$ its length.

\begin{theorem}
\label{T:Noe} Let $R$ be a local Noetherian ring of Krull dimension one at most. There exists a positive integer $n$ such that $\mathrm{Mr}\ E\leq n$ for each indecomposable injective $R$-module $E$.
\end{theorem}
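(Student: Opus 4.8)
The plan is to reduce the statement to a bound on minimal numbers of generators of finite-length modules, and then to invoke the cited result of Malliavin. Let $R$ be a one Krull-dimensional local Noetherian ring with maximal ideal $\mathfrak{m}$, and let $E$ be an indecomposable injective $R$-module. The starting point is that $E$ is the injective hull of $R/\mathfrak{p}$ for some prime ideal $\mathfrak{p}$; since $\dim R=1$ there are two cases. If $\mathfrak{p}=\mathfrak{m}$, then $E=E(R/\mathfrak{m})$ is the Matlis dual of the $\mathfrak{m}$-adic completion $\widehat{R}$, and every finitely generated submodule $X$ of $E$ has finite length; by Matlis duality $\mathrm{gen}\ X=\ell(X)/\text{(something controlled)}$—more precisely, $\mathrm{gen}\ X\le \ell(X/\mathfrak{m}X)$ is not bounded in general, so instead I would use that $\mathrm{gen}\ X$ for $X\subseteq E(R/\mathfrak{m})$ is bounded by the number of generators needed, which for an Artinian module over a complete local Noetherian ring is governed by the Cohen--Macaulay type / the minimal number of generators of the canonical module. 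The cleaner route: $\mathrm{Mr}\ E(R/\mathfrak{m})$ equals the minimal number of generators of $\widehat{R}$ as a module over itself only if $R$ is Gorenstein; in general one gets $\mathrm{Mr}\ E(R/\mathfrak{m}) = \mu(\omega_{\widehat{R}})$ or a similar invariant, which is finite. So the $\mathfrak{p}=\mathfrak{m}$ case is immediate and contributes a finite bound.

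The substantive case is $\mathfrak{p}$ a minimal prime, so that $R/\mathfrak{p}$ is a one-dimensional local Noetherian domain and $E=E(R/\mathfrak{p})$. Here I would first localize and pass to $R_\mathfrak{p}$-torsion considerations, but the key structural fact is that $E(R/\mathfrak{p})$ restricted to $R/\mathfrak{p}$ is, after inverting the nonzero elements, the injective hull over the fraction field; the point is to control finitely generated $R$-submodules $X$ of $E$. Each such $X$ is a finitely generated $R/\mathrm{Ann}(X)$-module, and $\mathrm{Ann}(X)$ is $\mathfrak{p}$-primary (up to nilpotents), so $R/\mathrm{Ann}(X)$ has Krull dimension $1$. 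The crucial input is Malliavin's theorem, which I would cite in the form: for a one-dimensional local Noetherian ring, there is a uniform bound on $\mathrm{gen}\ X$ as $X$ ranges over finitely generated submodules of $E(R/\mathfrak{p})$, equivalently a bound on $\mathrm{gen}\ X$ for $X$ ranging over submodules of $E(R/\mathfrak{p})$ of the form $(0:_E \mathfrak{p}^k)$ or over all finitely generated uniform modules with associated prime $\mathfrak{p}$. Concretely, one shows $\mathrm{gen}\ X \le \ell(X/\mathfrak{p}X) \le$ (a constant depending only on $R$), because the $R/\mathfrak{p}$-module $E(R/\mathfrak{p})/\mathfrak{p}\,E(R/\mathfrak{p})$ has finite length over the one-dimensional domain $R/\mathfrak{p}$—this finiteness is exactly where Malliavin's result enters, via the finiteness of the "reduction" of the injective hull.

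Assembling: take $n$ to be the maximum, over the finitely many associated primes $\mathfrak{p}$ of $R$ (of which there are finitely many since $R$ is Noetherian), of the bound obtained in each case; equivalently, since $\dim R=1$, over $\mathfrak{m}$ and the finitely many minimal primes. Then $\mathrm{Mr}\ E\le n$ for every indecomposable injective $E$, since every such $E$ is $E(R/\mathfrak{p})$ for one of these primes and $\mathrm{Mr}\ E=\sup\{\mathrm{gen}\ X\}$ over finitely generated submodules $X$. The main obstacle I anticipate is making precise and correctly invoking Malliavin's result—identifying exactly which finiteness statement ("$E(R/\mathfrak{p})$ has a finite-length socle-like reduction", or "the number of generators of finitely generated submodules of a one-dimensional injective hull is bounded") is the one proved, and checking that the non-domain, non-reduced cases (where $R$ may have embedded components or nilpotents) are genuinely covered, perhaps by first killing the nilradical and using that $\mathrm{Mr}$ does not increase, then handling each $R/\mathfrak{p}$ separately and combining via the finitely many associated primes.
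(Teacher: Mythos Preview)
Your proposal has two genuine gaps, and they both stem from a misidentification of what Malliavin's theorem actually says. The result cited is simply that $\mathrm{Mr}\,R<\infty$ for a one-dimensional local Noetherian ring $R$: there is a uniform bound $m$ on the number of generators of \emph{ideals} of $R$. It is \emph{not} a statement about submodules of injective hulls; that is precisely what has to be deduced. So when you write that you would ``cite [Malliavin] in the form: \ldots there is a uniform bound on $\mathrm{gen}\,X$ as $X$ ranges over finitely generated submodules of $E(R/\mathfrak{p})$,'' you are assuming the conclusion.

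You also have the two cases reversed in difficulty and in method. For a \emph{minimal} prime $\mathfrak{p}$ the correct move is to localize at $\mathfrak{p}$, not to pass to $R/\mathfrak{p}$: since $\dim R=1$, the ring $R_\mathfrak{p}$ is Artinian, and $E(R/\mathfrak{p})$ is an $R_\mathfrak{p}$-module of finite length whose composition factors are all $R_\mathfrak{p}/\mathfrak{p}R_\mathfrak{p}$. Each factor, viewed as an $R$-module, has Malcev rank $\mathrm{Mr}(R/\mathfrak{p})\le m$ (finitely generated $R$-submodules of $\mathrm{Frac}(R/\mathfrak{p})$ are fractional ideals), and Malcev rank is subadditive in short exact sequences, so induction on length finishes this case. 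Your route through $R/\mathfrak{p}$ and ``$E/\mathfrak{p}E$ has finite length'' does not lead anywhere without further input.

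The case $\mathfrak{p}=\mathfrak{m}$ is the one that needs a real argument, and your Matlis-duality/canonical-module sketch is not one: the identity $\mathrm{Mr}\,E(R/\mathfrak{m})=\mu(\omega_{\widehat R})$ is neither stated precisely nor true in the generality required (the ring need not be Cohen--Macaulay). The paper's argument is this: for a finitely generated $M\subseteq E(R/\mathfrak{m})$ with $A=(0:M)$, the Artinian ring $R/A$ satisfies $R/A\cong\mathrm{End}_R(M)$ and $M$ is injective over $R/A$; if $B/A$ is the socle of $R/A$ then $p:=\ell(B/A)=\mathrm{gen}\,B\le m$ by Malliavin, and the embedding $R/A\hookrightarrow M^{p}$ dualizes (apply $\mathrm{Hom}_{R/A}(-,M)$) to a surjection $(R/A)^{p}\twoheadrightarrow M$, giving $\mathrm{gen}\,M\le p\le m$. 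This is where Malliavin's bound on ideals is actually used, and it is the step your proposal is missing.
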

\begin{proof}
By \cite[Th\'eor\`eme 1.4.2]{Mal66} $\mathrm{Mr}\ R$ is finite. We put $m=\mathrm{Mr}\ R$. Let $E$ be an indecomposable injective module. Then there exists a prime ideal $L$ such that $E=E(R/L)$. First we assume that $L$ is a minimal prime. It follows that $E$ is a module of finite length over $R_L$ by \cite[Theorem 3.11(2)]{Mat58} since $R_L$ is Artinian. In this case $E$ has a composition series whose factors are isomorphic to $R_L/LR_L$. It is easy to see that $\mathrm{Mr}\ R_L/LR_L=\mathrm{Mr}\ R/L\leq m$. Now, by induction on the length of $E$ over $R_L$ and by using \cite[Lemma XII.1.4]{FuSa01} we get that $\mathrm{Mr}\ E<\infty$. Now we assume that $L=P$ the maximal ideal of $R$. Let $M$ be a finitely generated submodule of $E$ and $A=(0:M)$. Since $E$ is Artinian then $M$ is a module of finite length and $R/A$ is Artinian. By \cite[Proposition 1.2]{Cou81} $M$ is injective over $R/A$ and $R/A=\mathrm{End}_R(M)$. Let $B$ be the ideal of $R$ such that $B/A$ is the socle of $R/A$. If $p=\ell(B/A)$ then $p=\mathrm{gen}\ B\leq m$. So, there is an exact sequence $0\rightarrow R/A\rightarrow M^p$ and by applying the functor $\mathrm{Hom}_{R/A}(-,M)$ to this sequence, we get that $M$ is a homomorphic image of $(R/A)^p$. So, $\mathrm{Mr}\ E\leq m$. Since the set of prime ideals of $R$ is finite the theorem is proven.
\end{proof}

\begin{example}
Let $R$ be a local ring of maximal ideal $P$ such that $P^2=0$. If $\mathrm{gen}\ P=n$ where $n>0$ it is easy to check that $\mathrm{Mr}\ E(R/P)\leq n$.
\end{example}

In the sequel, for each integer $n>1$ we shall give an example of a local Noetherian domain $D$ of  Krull dimension one all  whose finitely generated uniform mo\-dules are generated by at most $n$ elements.

\begin{example}
Consider the  Noetherian domain $R$ defined in in the following way. Let $K$ be a field, $K[X,Y]$ the polynomial ring in two variables $X$ and
$Y,$ and $f = Y^n-X^n(1+X).$ By
considering that $f$ is a polynomial in one variable $Y$ with
coefficients in $K[X],$ it follows from Eisenstein's criterion that $f$ is
irreducible. Then $D = \displaystyle{\frac{K[X,Y]}{fK[X,Y]}}$ is a
domain. Let $x$ and $y$ be the  images of $X$ and $Y$ in $D$ by the natural
map and $P'$ the maximal ideal of $D$ generated by $\{x,y\}.$ Let $R=D_{P'}$ and $P=P'R$. 

Then
$\mathrm{Mr}\ R= n$ and $\mathrm{Mr}\ E= n$ for each indecomposable injective $R$-module $E$.
\end{example}
\begin{proof}  There are only two types of indecomposable injective modules: $E(R/P)$ and $Q$ the quotient field of $D$ and $R$.  Let $M$ be a finitely generated submodule of $Q$. Then $M$ is isomorphic to an ideal of $R$. As a module over over $K[X]$, $D$ is generated by $n$ elements $1,y,y^2,\dots,y^{n-1}$. Since $K[X]$ is a principal ideal domain, by \cite[Lemma 1.3]{WiWi75} each $K[X]$-submodule of $D$ is generated by at most $n$ elements. It follows that each ideal of $D$ and each ideal of $R$ is generated by at most $n$ elements. Let us observe that $\mathrm{gen}\ P^m=n$ for each $m\geq n-1$. So, $\mathrm{Mr}\ R=\mathrm{Mr}\ Q=n$. As in the proof of Theorem~\ref{T:Noe} we show that $\mathrm{Mr}\ E(R/P)\leq n$, and since $\mathrm{gen}\ P^{n-1}=n$ we have $\mathrm{gen}\ \mathrm{Hom}_R(R/P^n,E(R/P))=n$. The proof is now complete.
\end{proof}

\bigskip

\section{Goldie dimension and localization}
\label{S:Gol}

At the beginning of this section $R$ is not necessarily a chain ring.

\begin{proposition}
\label{P:Gol} Let $R$ be a ring satisfying one of the following two conditions:
\begin{enumerate}
\item $R_P$ is a domain of Krull dimension one for each maximal ideal $P$;
\item $R_P$ is Noetherian for each maximal ideal $P$.
\end{enumerate}
Then, $S^{-1}M$ has finite Goldie dimension for each $R$-module $M$ of finite Goldie dimension and for each multiplicative subset $S$ of $R$.
\end{proposition}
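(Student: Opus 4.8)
The goal is to show that Goldie dimension is preserved under arbitrary localization $M \mapsto S^{-1}M$, under either of the two hypotheses. The key structural fact I would use is the standard reduction: if $M$ has Goldie dimension $n$, then $M$ contains an essential submodule $U_1 \oplus \cdots \oplus U_n$ with each $U_i$ uniform, and it suffices to bound $\mathrm{Gd}(S^{-1}M)$ by controlling each $\mathrm{Gd}(S^{-1}U_i)$ separately, since localization is exact and takes essential submodules to essential submodules (over the localized ring). So the problem reduces immediately to the case where $M = U$ is a \emph{uniform} $R$-module, and one must prove $\mathrm{Gd}(S^{-1}U) < \infty$; in fact I would aim to show $\mathrm{Gd}(S^{-1}U)$ is bounded by a number depending only on $R$ (not on $U$). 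Passing to $M/\operatorname{ann}(x)$ for a nonzero $x \in U$ and to the stalk, one reduces further to bounding the number of indecomposable summands of the injective hull of a uniform module over a localization $R_P$ — i.e.\ the situation already handled in the Noetherian/one-dimensional settings earlier in the paper.

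**Case (2): $R_P$ Noetherian for all maximal $P$.** Here I would localize first at a maximal ideal and use that for a Noetherian ring, associated primes and Goldie dimension behave well: $\mathrm{Gd}(N) = \sum_{\mathfrak{p} \in \operatorname{Ass}(N)} \dim_{k(\mathfrak p)}\operatorname{Hom}(k(\mathfrak p), N_{\mathfrak p})$, and localization simply discards those associated primes not meeting $S$ while leaving the others (with the same local contribution). Concretely, $\operatorname{Ass}(S^{-1}M) = \{\,\mathfrak{p} S^{-1}R : \mathfrak p \in \operatorname{Ass}(M),\ \mathfrak p \cap S = \varnothing\,\}$ for $M$ finitely generated, and a standard direct-limit argument extends finite Goldie dimension from the finitely generated case to arbitrary $M$ (a module of finite Goldie dimension has an essential finitely generated — indeed finite direct sum of uniform cyclic — submodule). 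This gives $\mathrm{Gd}(S^{-1}M) \le \mathrm{Gd}(M)$ directly.

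**Case (1): $R_P$ a one-dimensional domain for all maximal $P$.** Reduce as above to $U$ uniform and then to a stalk $R_P$, which is a one-dimensional local domain. A uniform $R_P$-module has injective hull $E(R_P/\mathfrak q)$ for a prime $\mathfrak q$, which is either $(0)$ or $P_P$; localizing at $S$ either kills the module, or leaves it unchanged, or passes from $E(R_P/P_P)$ to its localization at $(0)$, which is the injective hull of the quotient field — a \emph{single} indecomposable injective. The one subtlety is that $R_P$ need not be Noetherian, so I cannot directly invoke the Noetherian argument; instead I would invoke Theorem~\ref{T:Noe}-style reasoning only where $R_P$ happens to be Noetherian, and otherwise use the one-dimensionality directly: every localization of a one-dimensional local domain at a prime is either the domain itself or its fraction field, and in the latter case every module becomes a vector space whose Goldie dimension is its dimension — but a uniform module stays uniform (dimension $\le 1$) under any localization because a localization of a uniform module is uniform. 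That last observation is really the heart: \emph{localization of a uniform module is uniform}, since if $S^{-1}a \cap S^{-1}b = 0$ in $S^{-1}U$ with $a,b \ne 0$, then some $s,t \in S$ give $sa, tb \ne 0$ (torsion-freeness is not automatic, so here one uses that $U$ uniform forces $a' := s a$ and $b' := t b$ to have nonzero intersection $Ra' \cap Rb' \ne 0$, and this survives localization). Hence each $U_i$ contributes at most $1$ to $\mathrm{Gd}(S^{-1}M)$, giving $\mathrm{Gd}(S^{-1}M) \le \mathrm{Gd}(M)$ again.

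**Main obstacle.** The delicate point is the passage from finitely generated $M$ to arbitrary $M$: one needs that a module of finite Goldie dimension always has an essential submodule which is a \emph{finite} direct sum of uniform modules, each of which may be taken cyclic, so that the whole argument is reduced to cyclic (hence, after quotienting by an annihilator, finitely generated) modules, and then one checks localization commutes with these essential extensions. The other place requiring care is Case (1) without Noetherian hypotheses on $R_P$: there I must avoid any appeal to associated primes and argue purely via "uniform stays uniform under localization," which is why I isolated that lemma above; verifying it uses only the chain-free definition of uniformity and exactness of localization, and should go through without friction once stated carefully.
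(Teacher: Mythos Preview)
Your proposal contains a genuine error at what you call ``the heart'' of the argument: the claim that \emph{localization of a uniform module is uniform} is false in general, and your justification does not work. You argue that if $a,b\in U$ are nonzero then $Ra\cap Rb\ne 0$ by uniformity, and that ``this survives localization''. But it need not: every element of $Ra\cap Rb$ may be $S$-torsion, in which case $S^{-1}(Ra)\cap S^{-1}(Rb)=S^{-1}(Ra\cap Rb)=0$ even though $Ra\cap Rb\ne 0$. The very paper you are working in shows the claim fails: Theorem~\ref{T:main2} characterizes exactly when finite Goldie dimension is preserved by localization over a chain ring, and when $d_R(L)=\infty$ for some nonzero prime $L$ there is an indecomposable injective (hence uniform) module $E$ with $\mathrm{Gd}\,E_L=\infty$. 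If your lemma were true, Proposition~\ref{P:Gol} and Theorem~\ref{T:main2} would both be trivial. For the same reason, your opening assertion that localization ``takes essential submodules to essential submodules'' is false in general: take any nonzero cyclic submodule of such an $E$; it is essential in $E$, but its localization is cyclic over the chain ring $R_L$, hence uniserial, so it cannot be essential in the infinite-Goldie-dimensional $E_L$.

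The paper's proof sidesteps both pitfalls. It reduces not to arbitrary uniform submodules but to indecomposable \emph{injective} modules: from $M\hookrightarrow E_1\oplus\cdots\oplus E_n$ one gets $S^{-1}M\hookrightarrow\bigoplus S^{-1}E_i$ by exactness alone, so $\mathrm{Gd}\,S^{-1}M\le\sum\mathrm{Gd}\,S^{-1}E_i$ with no essentialness claim needed. Each $E_i$ has local endomorphism ring, hence is already an $R_P$-module for some maximal $P$, which gives the reduction to the local case that you only gesture at. In case~(1), with $R$ now a one-dimensional local domain, the only nontrivial localization is at $(0)$; an indecomposable injective $E$ is either torsion (so $S^{-1}E=0$) or torsion-free (so $E=Q$ and $S^{-1}E=E$). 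In case~(2), with $R$ local Noetherian, one reduces further to $E=E(R/P)$ with $S\cap P\ne\emptyset$; since $E$ is Artinian, the image of $E\to S^{-1}E$ is Artinian and hence has essential semisimple socle $X$, but any $s\in S\cap P$ annihilates $X$ while acting invertibly on $S^{-1}E$, forcing $S^{-1}E=0$. Your torsion/torsion-free dichotomy in case~(1) and your associated-prime bookkeeping in case~(2) are both pointing in the right direction and essentially match the paper's case analysis once the reduction is done correctly; the mistake is only in trying to promote that case-specific reasoning to a general lemma about uniform modules.
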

\begin{proof}
If $\mathrm{Gd}\ M<\infty$ then $M$ is a submodule of a finite direct sum of indecomposable injective modules $(E_i)_{1\leq i\leq n}$. It follows that $\mathrm{Gd}\ S^{-1}M<\infty$ if and only if $\mathrm{Gd}\ S^{-1}E_i<\infty$ for $i=1,\dots,n$. So, we may assume that $M$ is injective and indecomposable. On the other hand, since $\mathrm{End}_R(M)$ is a local ring, there exists a maximal ideal $P$ such that $M$ is a  module over $R_P$. So, we may assume that $R$ is local of maximal ideal $P$.

If $R$ satisfies $(1)$ then $S=R\setminus \{0\}$. Either $M$ is torsion-free and $S^{-1}M=M$, or $M$ is torsion and $S^{-1}M=0$.

If $R$ satisfies $(2)$, we may assume that $M=E(R/P)$ and $S\cap P\ne\emptyset$. Let $\phi$ be the natural map $M\rightarrow S^{-1}M$. Since $M$ is artinian by  \cite[Corollary 3.4]{Mat58} then so is the image of $\phi$. It follows that $S^{-1}M$ is an essential extension of a semisimple module $X$. But, for each $s\in S\cap P$, $sX=0$. We conclude that $S^{-1}M=0$.
\end{proof}

\begin{proposition}
\label{P:dGol} Let $R$ be a ring of Krull dimension zero. Then $\mathrm{dG}\ S^{-1}M<\infty$ for each module $M$ with $\mathrm{dG}\ M<\infty$ and for each multiplicative subset $S$ of $R$.
\end{proposition}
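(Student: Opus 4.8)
The plan is to reduce, exactly as in the proof of Proposition~\ref{P:Gol}, to the case where $M$ is co-uniform and $R$ is local. Indeed, if $\mathrm{dG}\ M<\infty$ then there is an epimorphism $\phi$ from $M$ onto a finite direct sum of co-uniform modules $\oplus_{i=1}^n C_i$ with superfluous kernel; applying the exact functor $S^{-1}(-)$ we get an epimorphism $S^{-1}M\to \oplus_{i=1}^n S^{-1}C_i$ whose kernel $S^{-1}(\ker\phi)$ is still superfluous in $S^{-1}M$ (localization preserves the property $K+G=M\Rightarrow G=M$, since a finitely generated module $S^{-1}R\,\bar y_1+\dots+S^{-1}R\,\bar y_k$ that generates $S^{-1}M$ together with finitely many generators of $K$ already does so after clearing denominators). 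Hence it suffices to bound $\mathrm{dG}\ S^{-1}C$ for each co-uniform $C$, so we may assume $M$ itself is co-uniform. Then $\mathrm{End}_R(M)$ is local, so there is a maximal ideal $\mathfrak p$ such that $M$ is an $R_{\mathfrak p}$-module and we may replace $R$ by $R_{\mathfrak p}$, i.e. assume $R$ local of maximal ideal $P$, still zero Krull-dimensional, hence $P$ is the nilradical and $P=\sqrt{0}$.

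The key step is then: a co-uniform module $C$ over a zero-dimensional local ring, localized at a multiplicative set $S$, has dual Goldie dimension at most $1$. If $S\cap P=\emptyset$ then every element of $S$ is a unit of $R$ (since $R\setminus P$ consists of units), so $S^{-1}C=C$ and $\mathrm{dG}\ S^{-1}C=1$. If $S\cap P\neq\emptyset$, pick $s\in S\cap P$; since $P$ is nil, $s^k=0$ for some $k$, hence $s$ acts as a nilpotent unit on $S^{-1}C$, forcing $S^{-1}C=0$ and $\mathrm{dG}\ S^{-1}C=0$. Combining the two cases, $\mathrm{dG}\ S^{-1}C\le 1$ in all cases, and therefore $\mathrm{dG}\ S^{-1}M\le n=\mathrm{dG}\ M<\infty$.

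The main obstacle I anticipate is the careful bookkeeping around superfluous submodules: I must check that $S^{-1}(\ker\phi)$ remains superfluous in $S^{-1}M$ and that the composite $S^{-1}M\to\oplus S^{-1}C_i$ is still an epimorphism onto a direct sum of modules each of which is co-uniform or zero (dropping the zero summands lowers, and never raises, the count). The co-uniformity of $S^{-1}C$ when $S\cap P=\emptyset$ is immediate from $S^{-1}C\cong C$; it is the superfluousness transfer that needs the elementary ``finitely many denominators'' argument, but this is routine and poses no real difficulty. Everything else is a verbatim copy of the structure of the proof of Proposition~\ref{P:Gol}.
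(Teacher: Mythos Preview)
Your argument has two genuine gaps, and both of them point back to the single observation that the paper actually uses.

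First, the ``clearing denominators'' justification that $S^{-1}(\ker\phi)$ stays superfluous in $S^{-1}M$ does not work as written: $S^{-1}M$ need not be finitely generated, so you cannot witness the equality $S^{-1}K+G=S^{-1}M$ with finitely many elements and then pull it back to $M$. In general, superfluous submodules are \emph{not} preserved by localization. What \emph{does} make it true here is that for a zero Krull-dimensional ring every $s\in R$ satisfies $s^n=s^{n+1}t$ for some $n$ and $t$, so $R\to S^{-1}R$ is surjective, hence $M\to S^{-1}M$ is surjective; and surjections do carry superfluous submodules to superfluous submodules. But once you know $S^{-1}M$ is a quotient of $M$, you are done immediately: dual Goldie dimension can only drop under quotients, so $\mathrm{dG}\,S^{-1}M\le\mathrm{dG}\,M$. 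This is exactly the paper's one-line proof, and it makes the entire reduction to co-uniform summands and to local $R$ unnecessary.

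Second, the step ``$C$ co-uniform $\Rightarrow$ $\mathrm{End}_R(C)$ local $\Rightarrow$ $C$ is an $R_{\mathfrak p}$-module'' is not a legitimate analogue of the corresponding step in Proposition~\ref{P:Gol}. Indecomposable injectives do have local endomorphism rings, but co-uniform modules need not: there exist uniserial modules whose endomorphism ring has exactly two maximal right ideals. Even when $\mathrm{End}_R(C)$ happens to be local, your implicit deduction only gives that multiplication by each $r\notin\mathfrak p$ is surjective on $C$, not bijective, which is what ``$C$ is an $R_{\mathfrak p}$-module'' requires. So the reduction to the local case is unjustified.

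In short: the paper bypasses all of this by observing that in Krull dimension zero the localization map is onto, whence $\mathrm{dG}\,S^{-1}M\le\mathrm{dG}\,M$ in one stroke. Your local endgame (either $S$ consists of units or $S$ contains a nilpotent) is really the local shadow of that same surjectivity statement; you should use it globally instead of trying to reduce to it.
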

\begin{proof}
Since the natural maps $R\rightarrow S^{-1}R$ and $M\rightarrow S^{-1}M$ are surjective then $\mathrm{dG}\ S^{-1}M\leq\mathrm{dG}\ M$.
\end{proof}

\begin{example}
Let $R$ be a  local UFD of Krull dimension two, $p$ a prime element of $R$ and $S=\{p^n\mid n\in\mathbb{N}\}$. Then $\mathrm{dG}\ R=1$ and $\mathrm{dG}\ S^{-1}R=\infty$.
\end{example}
\begin{proof}
The first equality is obvious. Let $\Phi$ be the set of prime elements of $R$. If $P$ is the maximal ideal of $R$ then $P=\cup_{q\in\Phi} Rq$. So, $\Phi$ is not finite, else, by a classical lemma $P=Rq$ for some $q\in\Phi$ that is impossible. Let $n$ be a positive integer, let $q_1,\dots,q_n$ be $n$ distinct   elements  of $\Phi\setminus \{p\}$ and let $a=q_1\times\dots\times q_n$. By the chinese remainder theorem  $S^{-1}R/aS^{-1}R\cong S^{-1}R/q_1S^{-1}R\times\dots\times S^{-1}R/q_nS^{-1}R$. So, $\mathrm{dG}\ S^{-1}R\geq n$ for each $n>0$.
\end{proof}

\begin{theorem}
\label{T:main2} Let $R$ be a chain ring. The following conditions are equivalent:
\begin{enumerate}
\item For each  module $M$ of finite Goldie dimension and for each prime ideal $L$, $M_L$ has finite Goldie dimension;
\item for each prime ideal $L\ne 0$, $d_R(L)$ is finite.
\end{enumerate}
\end{theorem}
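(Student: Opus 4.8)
The plan is to run, for $(2)\Rightarrow(1)$, the reduction used in Proposition~\ref{P:Gol}, and then to compute the Goldie dimension of the localization of an indecomposable injective module in terms of the total defects $d_R(L)$; the converse then drops out of the same computation by contraposition.

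For $(2)\Rightarrow(1)$, let $\mathrm{Gd}\,M<\infty$ and let $L$ be a prime. Embedding $M$ into a finite direct sum of indecomposable injective modules and using the subadditivity of Goldie dimension, it suffices to bound $\mathrm{Gd}\,E_L$ for an indecomposable injective $E$. Writing $J=E_\sharp$, the module $E$ is already an $R_J$-module, so $R\setminus L$ acts bijectively on $E$ (hence $E_L=E$) unless $L\subsetneq J$; replacing $R$ by $R_J$ we are reduced to $E=E(R/P)$ localized at a prime $L\subsetneq P$. The kernel of $E(R/P)\to E(R/P)_L$ is the $L$-torsion submodule $T=\{x\mid (0:x)\not\subseteq L\}$, and its image $E(R/P)/T$ is essential in $E(R/P)_L$, so $\mathrm{Gd}\,E(R/P)_L$ is the Goldie dimension over $R_L$ of $E(R/P)/T$. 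Now I would exploit $E(R/P)=\bigcup_{A\in\mathcal A(E)}(0:_EA)$ together with the isomorphisms $(0:_EA)\cong\widehat R/A\widehat R$ (valid because $A^\sharp=E_\sharp=P$ by \cite[Lemma 26]{Couch03} and \cite[Theorem 6]{Couc06}) and the breadth-ideal calculus of Lemma~\ref{L:breadth}: after quotienting by $T\cap(0:_EA)$ one is left with the reductions $\widehat R/(A)^L\widehat R$, and passing to the colimit over those $A\subseteq L$ identifies $E(R/P)/T$, after localization at $L$, with a module built from the maximal immediate extension $\widehat{R/L}$ of the valuation domain $R/L$. This yields the key estimate $\mathrm{Gd}\,E(R/P)_L\le d_R(L)<\infty$. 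When $R$ is not a domain one first passes to $R/N$ ($N$ the nilradical, which is prime) and uses \cite[Corollary 28]{Couch03} to deal with the case where $E(R/P)$ is annihilated by a simple ideal, the rank arguments being replaced by the flatness arguments of Section~\ref{S:compl}.

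For $(1)\Rightarrow(2)$, argue by contraposition: suppose $d_R(L)=\infty$ for some nonzero prime $L$. Since the residue field $R/P$ is its own pure-injective hull we have $d_R(P)=1$, so necessarily $L\subsetneq P$; then $M=E(R/P)$ is indecomposable injective, $\mathrm{Gd}\,M=1<\infty$, while the computation above gives $\mathrm{Gd}\,M_L=d_R(L)=\infty$, so $(1)$ fails. The delicate point throughout is the identification in the second paragraph — that killing the $L$-torsion of $E(R/P)$ and localizing at $L$ produces exactly $d_R(L)$ indecomposable injective summands over $R_L$. This requires (i) that the annihilator ideals of $E(R/P)$ contained in $L$ are cofinal in $L$; (ii) a precise description of how the isomorphisms $(0:_EA)\cong\widehat R/A\widehat R$ transform under the inclusions $(0:_EA)\subseteq(0:_EA')$, since this is what determines which elements survive the passage to $R_L$; and (iii) the comparison between $\widehat R/L\widehat R$ and $\widehat{R/L}$. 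The non-domain case adds the expected complication that $\widehat R$ is no longer torsion-free, so the rank bookkeeping must be carried out with flat modules.
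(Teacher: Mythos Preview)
Your outline has the right shape but contains two genuine gaps.

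First, the step ``replacing $R$ by $R_J$ we are reduced to $E=E(R/P)$'' is not correct: after replacing $R$ by $R_J$ you have an indecomposable injective with bottom prime equal to the maximal ideal, but over a chain ring there are many such modules (for instance $E(R/A)$ for any $A$ with $A^\sharp=P$), not only $E(R/P)$. Fortunately your subsequent argument uses only $E_\sharp=P$, so this slip is harmless once noticed; the paper simply keeps $E$ general with $E_\sharp=P$.

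Second, your colimit-and-$L$-torsion identification is harder than what is needed, and you rightly flag it as incomplete. The paper avoids the colimit by a simpler device: pick \emph{one} $A\in\mathcal A(E)$ with $A\subset L$ (such $A$ exists since $E$ is faithful or annihilated by a simple ideal, \cite[Corollary 28]{Couch03}) and set $E'=(0:_EA)$. A two-line argument shows $E'_L$ is essential in $E_L$: if $x/1\notin E'_L$ then $x\notin E'$, so $(0:x)=rA$ for some $r\in P$, whence $(0:rx)=A$ and $0\ne rx/1\in E'_L$. Thus $\mathrm{Gd}\,E_L=\mathrm{Gd}\,E'_L$ with $E'_L\cong(\widehat{R/A})_L$. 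Now $(R/A)_L$ is self FP-injective (its zero-divisors form $P/A$, \cite[Theorem 11(2)]{Couch03}), and $\widehat{R/A}$ is singly projective over $R/A$, hence so is $(\widehat{R/A})_L$ over $(R/A)_L$; then \cite[Propositions 24 and 21]{Coucho07} produce an \emph{essential} free $(R/A)_L$-submodule of rank $d_R(L)$, so $\mathrm{Gd}\,E_L=d_R(L)$. This single step replaces your items (i)--(iii) and treats domains and non-domains uniformly, with no separate reduction modulo $N$.

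For $(1)\Rightarrow(2)$ you cannot simply quote the equality $\mathrm{Gd}\,M_L=d_R(L)$, since the derivation above presupposed $d_R(L)<\infty$ (the essential free submodule has rank $d_R(L)$, which is meaningless otherwise). The paper argues directly: assuming $d_R(L)=\infty$, for each $n$ the $R/L$-module $\widehat R/L\widehat R$ contains a torsion-free submodule of rank $n$; choosing $A\subset L$ with $A^\sharp=P$ and lifting via \cite[Proposition 21]{Coucho07} yields a free $(R/A)_L$-submodule of rank $n$ inside $(\widehat R/A\widehat R)_L\subseteq E(R/A)_L$, so $\mathrm{Gd}\,E(R/A)_L\ge n$ for every $n$.
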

\begin{proof}
$(1)\Rightarrow (2)$.
By way of contradiction suppose there exists a non-zero prime ideal $L$ with $d_R(L)=\infty$. Then, for each integer $n>0$, $\widehat{R}/L\widehat{R}$ contains a torsion-free $R/L$- module $F$ of rank $n$. Let $A$ be an  ideal such that $A^{\sharp}=P$ and $A\subset L$. By \cite[Proposition 1.(2)]{Couc06} $\widehat{R}/A\widehat{R}$ is isomorphic to a submodule of the injective hull $E$ of $R/A$. Since $F_L$ is a $(R/L)_L$-vector space of dimension $n$ contained in $(\widehat{R}/L\widehat{R})_L$, we deduce from \cite[Proposition 21]{Coucho07}, applied to $(\widehat{R}/A\widehat{R})_L$, that $E_L$ contains a free $(R/A)_L$-module of rank $n$. So, $\mathrm{Gd}\ E_L\geq n$ for each integer $n$.

$(2)\Rightarrow (1)$. It is sufficient to show that $\mathrm{Gd}\ E_L<\infty$ for each  indecomposable injective module $E$ and each non-zero prime ideal $L$. Let $J=E_{\sharp}$. If $J\subseteq L$, then $E$ is a module over $R_J$, whence $E_L=E$. If $L\subset J$, since $d_{R_J}(L_J)\leq d_R(L)$, after replacing $R_J$ by $R$, we may assume that $J=P$. If $L=0$ (in the case where $R$ is a domain) then $E_L=0$. By \cite[Corollary 28]{Couch03}   $E$ is either faithful or annihilated by a simple ideal. So, if $L\ne 0$, there exists $A\in\mathcal{A}(E)$ such that $A\subset L$. We put $E'=(0:_EA)$. First we show that $E'_L$ is essential in $E_L$. Let $x\in E$ such that $\dfrac{x}{1}\notin E'_L$. Then $x\notin E'$. So, $(0:x)=rA$ where $r\in P$. It follows that $(0:rx)=A$. We conclude that $r\dfrac{x}{1}\in E'_L$ and $r\dfrac{x}{1}\ne 0$. Let $d=d_R(L)$. Then $(\widehat{R})_L/L(\widehat{R})_L\cong (R/L)_L^d$. Since $P/A$ is the set of zero-divisors of $R/A$ then $(R/A)_L$ is self FP-injective by \cite[Theorem 11(2)]{Couch03}. From \cite[Proposition 1]{Couc06} it is easy to deduce that $\widehat{R/A}$ is  singly projective over $R/A$. By \cite[Proposition 6]{Coucho07}  $(\widehat{R/A})_L$ singly projective over $(R/A)_L$. By \cite[Propositions 24 and 21]{Coucho07} $E'_L$ (which is isomorphic to $(\widehat{R/A})_L$) contains an essential free $(R/A)_L$-submodule of rank $d$.  We conclude that $\mathrm{Gd}\ E_L=d<\infty$.
\end{proof}

\begin{corollary}
\label{C:main2} Let $R$ be a chain ring and let $J$ be the intersection of all non-zero prime ideals. Then $J$ is  prime  ($=N$ if $R$ is not a domain) and the following assertions hold:
\begin{enumerate}
\item if $J\ne0$ then Goldie dimension finiteness is preserved by localization if and only if $d_R(J)$ is finite;
\item  if $J=0$ and if $0$ is  a non countable intersection of non-zero prime ideals then the finiteness of Goldie dimension  is preserved by localization if and only if there exists a non-zero prime ideal $L$ such that $d_R(L)$ is finite and $R_L$ is almost maximal.
\end{enumerate}
\end{corollary}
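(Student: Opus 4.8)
The plan is to reduce everything to Theorem~\ref{T:main2}, which identifies ``Goldie dimension finiteness is preserved by localization'' (abbreviate this property $(\ast)$) with ``$d_R(L)<\infty$ for every non-zero prime $L$''. First, $J$ is prime because the prime ideals of a chain ring form a chain, so their intersection is prime; and if $R$ is not a domain then $R$ is not reduced (if $ab=0$ with $b=ca$ then $(ca)^2=0$, so $ca=b=0$ in a reduced chain ring), whence $N\ne 0$, and $N$ being the least prime forces $J=N$. In particular $J=0$ forces $N=0$, i.e.\ $R$ is a domain. Throughout I would use the following consequence of Theorem~\ref{T:InjHull}, equivalently of \cite[Theorem 10 and Proposition 11]{Cou10}: if $d_R(L)<\infty$ then the valuation domain $R/L$ is almost maximal by finitely many stages $P=\Lambda_0\supset\dots\supset\Lambda_m\supseteq L$, with $d_R(L)$ equal to the product of the $c_R(\Lambda_k)$; and since a quotient of a maximal valuation domain by a prime ideal, or of an almost maximal one by a non-zero prime ideal, is again maximal, for every prime $L'\supseteq L$ the stages of $R/L$ lying above $L'$ (the lowest one's localization now being maximal) furnish such a structure for $R/L'$. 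In particular $d_R(L')\le d_R(L)<\infty$, so $d_R$ is order-reversing wherever it is finite.

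For assertion (1), let $J\ne 0$, so $J$ is the least non-zero prime. If $(\ast)$ holds then $d_R(J)<\infty$ trivially. Conversely, if $d_R(J)<\infty$, then every non-zero prime $L$ contains $J$ and $d_R(L)=d_{R/J}(L/J)$, so it suffices to establish $(\ast)$ for the valuation domain $R/J$, which has finite-rank pure-injective hull; by Theorem~\ref{T:InjHull} this places $R/J$ under condition (2) of Theorem~\ref{T:main}, hence under all of (1)--(7), and the argument proving (1)$\Rightarrow$(2) of Theorem~\ref{T:main} shows that any of those conditions forces $d_{R/J}(\mathfrak q)<\infty$ for every non-zero prime $\mathfrak q$ of $R/J$; together with $d_{R/J}(0)=d_R(J)<\infty$ this gives $(\ast)$.

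For assertion (2), let $J=0$ (so $R$ is a domain with no least non-zero prime) and suppose $0$ is not a countable intersection of non-zero primes. The direction $(\Leftarrow)$ is immediate: a non-zero prime $L$ with $d_R(L)<\infty$ and $R_L$ almost maximal is exactly condition (2) of Theorem~\ref{T:main}, whence (as above) $d_R(L')<\infty$ for all non-zero primes $L'$, i.e.\ $(\ast)$. For $(\Rightarrow)$, assume $(\ast)$. If $\sup\{d_R(L):L\ne 0\}$ were infinite, the order-reversing property would produce a decreasing sequence $L_1\supseteq L_2\supseteq\cdots$ of non-zero primes with $d_R(L_n)\ge n$; by hypothesis $\bigcap_n L_n$ is a non-zero prime, and $d_R(\bigcap_n L_n)\ge d_R(L_n)\ge n$ for all $n$ contradicts $(\ast)$. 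Hence $d:=\max\{d_R(L):L\ne 0\}$ is finite and attained, and the set $\mathcal L$ of non-zero primes $L$ with $d_R(L)=d$ is a coinitial (downward) segment of the chain of non-zero primes, so $\bigcap_{L\in\mathcal L}L=0$. For $L\in\mathcal L$ let $M(L)$ be the bottom-stage prime of the canonical structure of $R/L$ (stages with $c_R\ge 2$), so $(R/L)_{M(L)}$ is maximal. Because $d$ is maximal, for $L'\subseteq L$ in $\mathcal L$ every stage of $R/L'$ must lie above $L$ (otherwise truncation would give $R/L$ a structure whose $c_R$-product strictly divides $d$, forcing a trivial stage, impossible for the canonical choice); hence the truncated structure is the canonical one for $R/L$ and $M(L)=M(L')$. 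Thus $M(L)$ equals a fixed non-zero prime $M$ for all $L\in\mathcal L$, and $R_M/LR_M=(R/L)_M$ is maximal for every $L\in\mathcal L$, the ideals $LR_M$ forming a coinitial family of non-zero primes of $R_M$ with intersection $0$. Given a non-zero proper ideal $A$ of $R_M$, there is a non-zero prime $\mathfrak q\subseteq A$ (else $A$ is contained in the intersection of all non-zero primes of $R_M$, which is $0$), hence an $L\in\mathcal L$ with $LR_M\subseteq\mathfrak q\subseteq A$, so $R_M/A$, a quotient of the maximal ring $R_M/LR_M$, is complete. Therefore $R_M$ is almost maximal, and since $d_R(M)<\infty$ by $(\ast)$, this is condition (2) of Theorem~\ref{T:main}, which completes $(\Rightarrow)$.

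The hard part is the stabilization of the bottom stage $M(L)$ as $L$ ranges over the coinitial family $\mathcal L$ in the proof of $(\Rightarrow)$ of (2): this is precisely where the non-countability hypothesis is indispensable — it bounds $\sup d_R$ and keeps the relevant intersections non-zero — and it rests on the uniqueness features of the structure theorem \cite[Theorem 10 and Proposition 11]{Cou10} (and, for the order-reversal of $d_R$, on its rank formula). The remaining steps are routine once Theorems~\ref{T:InjHull}, \ref{T:main}, and \ref{T:main2} are available.
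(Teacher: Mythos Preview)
Your proof is correct, and for part~(1) and the bound on $d_R$ in part~(2) it follows the paper closely. Where you genuinely diverge is in the second half of $(\Rightarrow)$ in~(2): you introduce the bottom-stage prime $M(L)$ for each $L\in\mathcal L$ and run a stabilization argument (using uniqueness of the canonical chain) to show that $M(L)$ is constant, then verify that $R_M$ is almost maximal by exhausting the non-zero ideals of $R_M$ by primes from $\mathcal L$. The paper is considerably more direct here: it simply lets $L$ be the \emph{maximal} prime with $d_R(L)=p$, and for each non-zero $L'\subset L$ applies Theorem~\ref{T:InjHull} to $R/L'$ to conclude at once that $(R/L')_L=R_L/L'$ is maximal (the bottom stage produced by Theorem~\ref{T:InjHull} is automatically this $L$, since it has defect $p$ and $L$ is maximal with that property); almost-maximality of $R_L$ then follows. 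In effect, the paper's $L$ is your $M$, but the paper identifies it in one line rather than via stabilization. What your route buys is an explicit verification of the uniqueness/compatibility of the bottom stage as $L'$ shrinks, which the paper leaves implicit in its invocation of Theorem~\ref{T:InjHull} (and of \cite{Cou10}); what the paper's route buys is brevity.
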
 
\begin{proof}
 $(1)$ is an immediate consequence of Theorem~\ref{T:main2}.

$(2)$. First we will show that there exists a positive integer $p$ such that $d_R(L)\leq p$ for each non-zero prime ideal $L$. By way of contradiction suppose   there exists a non-zero prime ideal $L_n$ such that $d_R(L_n)\geq n$, for each integer $n>0$. Let $H=\cap_{n>0}L_n$. Then $H$ is a non-zero prime ideal and $d_R(H)\geq n$ for each integer $n>0$. We get a contradiction by Theorem~\ref{T:main2}. Let $p$ be the maximum of $d_R(I)$ where $I$ runs over all non-zero prime ideals of $R$ and let $L$ be the maximal prime ideal for which $d_R(L)=p$. If $L'$ is a non-zero prime ideal, $L'\subset L$, then $\mathrm{Mr}\ \widehat{R/L'}=p$. By Theorem~\ref{T:InjHull} $R_L/L'$ is maximal. We conclude that $R_L$ is almost maximal.
\end{proof}

Let us  observe that the following conditions:
\begin{enumerate}
\item each indecomposable injective $R$-module is polyserial;
\item the finiteness of Goldie dimension  is preserved by localization;
\end{enumerate}
are equivalent if $R$ is a valuation domain such that $0$ is a non countable intersection of non-zero prime ideals. But, generally these two conditions are not equivalent. For instance, if $J\ne 0$, $d_R(J)<\infty$ and $R_J$ not almost maximal, where $J$ is the intersection of all non-zero prime ideals, then $R$ satisfies condition $(2)$ but not condition $(1)$. Another example of a chain ring satisfying condition $(2)$ but not condition $(1)$ is the following:
\begin{example}
Let $R$ be a strongly discrete valuation domain whose set of non-zero prime ideals is $\{L_n\mid n\in\mathbb{N}\}$ with $L_0=P$ and $L_{n+1}\subset L_n$ for each $n\in\mathbb{N}$. Moreover we assume that $c_R(L_n)=p$ for each $0\ne n\in\mathbb{N}$, where $p$ is a prime integer. Such a  ring $R$ exists by \cite[Theorem 8]{FaZa86}. For each integer $n>0$, $d_R(L_n)=p^n$. So, condition $(2)$ is satisfied by Theorem~\ref{T:main2}. But condition $(1)$ doesn't hold because there is no non-zero prime ideal $L$ with $R_L$ almost maximal.
\end{example}

\end{document}